\newtheorem{theorem}{Theorem}
\newtheorem{proposition}[theorem]{Proposition}
\newtheorem{lemma}[theorem]{Lemma}
\newtheorem{corollary}[theorem]{Corollary}
\newtheorem{remark}[theorem]{Remark}
\newtheorem{definition}[theorem]{Definitioon}
\newtheorem{notation}[theorem]{Notation}
\newtheorem{assumption}[theorem]{Assumption}
\newcommand{\BIGOP}[1]{\mathop{\mathchoice{\raise-0.22em\hbox{\huge
$#1$}} {\raise-0.05em\hbox{\Large $#1$}}{\hbox{\large $#1$}}{#1}}}
\newcommand\swapifbranches[3]{#1{#3}{#2}}
\patchcmd{\DeclarePairedDelimiter}{\@ifstar}{\swapifbranches\@ifstar}{}{}
\DeclarePairedDelimiter\abs{\lvert}{\rvert}
\DeclarePairedDelimiter\normDelimiters{\lVert}{\rVert}
\DeclarePairedDelimiter\bilinearDelimiters{\langle}{\rangle}
\DeclarePairedDelimiter\jumpPair{[}{]} 
\newcommand{\delimitMean}[1]{\ensuremath{\left\{\mkern-4.5mu\left\{ #1 \right\}\mkern-4.5mu\right\}}} 
\newcommand{\norm}[2]{\ensuremath{\normDelimiters{#1}_{#2}}}
\newcommand{\sesquilinear}[3]{\ensuremath{\bilinearDelimiters{#1,\overline{#2}}_{#3}}}
\newcommand{\jump}[3]{\ensuremath{\jumpPair{#1}_{\operatorname{#2}#3}}}
\newcommand{\mean}[3]{\ensuremath{\delimitMean{#1}_{\operatorname{#2}#3}}} 
\newcommand{\modified}[1]{#1}
\title{Skeleton Integral Equations for Acoustic Transmission Problems with Varying Coefficients}
\author{F. Florian\thanks{Institut f\"{u}r Mathematik,
Universit\"{a}t Z\"{u}rich, Winterthurerstr 190, CH-8057 Z\"{u}rich,
Switzerland (\url{francesco.florian@uzh.ch})}
\and R. Hiptmair\thanks{Seminar for Applied
Mathematics, ETH Z\"{u}rich, Z\"{u}rich, Switzerland (\url{ralf.hiptmair@sam.math.ethz.ch})}
\and S.A. Sauter\thanks{Institut f\"{u}r Mathematik,
Universit\"{a}t Z\"{u}rich, Winterthurerstr 190, CH-8057 Z\"{u}rich,
Switzerland (\url{stas@math.uzh.ch})}}
\newcommand{\rhc}[1]{#1}
\begin{document}

\maketitle

\begin{abstract}
  In this paper we will derive an \rhc{non-local (``integral'')} equation which transforms
  a three-dimensional acoustic transmission problem with \emph{variable} coefficients,
  non-zero absorption, and mixed boundary conditions to a non-local equation on a
  ``skeleton'' of the domain $\Omega\subset\mathbb{R}^{3}$, where ``skeleton'' stands for
  the union of the interfaces and boundaries of a Lipschitz partition of $\Omega$. To that
  end, we introduce and analyze abstract layer potentials as solutions of auxiliary
  coercive full space variational problems and derive jump conditions across domain
  interfaces. This allows us to formulate the non-local skeleton equation as a
  \emph{direct method} for the unknown Cauchy data \rhc{of the solution} of the original
  partial differential equation. We establish coercivity and continuity of the variational
  form of the skeleton { equation based} on auxiliary full space variational
  problems. Explicit \rhc{expressions for} Green's functions is not required and \rhc{all}
  our estimates are \emph{explicit} in the complex wave number.
\end{abstract}

\noindent Keywords: Acoustic wave equation, transmission problem, layer potentials, Calder\'{o}n operator.

\section{Introduction}

\textbf{Setting.} In this paper we consider acoustic transmission problems in Laplace domain.
\begin{equation}
  \label{eq:1}
  -\operatorname{div}\left(\mathbb{A}\nabla w\right)  +s^{2}p\,w = 0
  \quad\text{in }\Omega \subset \mathbb{R}^{3}.
\end{equation}
We admit general essentially bounded and uniformly positive (definite) coefficient
functions $\mathbb{A}$ and $p$ and mixed boundary conditions. More precisely, the boundary
conditions on $\partial\Omega$ are of Dirichlet and/or Neumann type and decay conditions
are imposed at infinity if the domain is unbounded. We assume that the (complex) wave
number $s$ has positive real part, \rhc{$\operatorname{Re}s>0$}, so that the arising
sesquilinear form in the variational formulation is coercive and well-posedness in
$H^{1}(\mathbb{R}^{3})$ follows by the Lax-Milgram lemma. More details are given in
Section~\ref{SubSecDiffOp} and the following.

{\textbf{Goal.} There exist many approaches to transform continuous and
  coercive acoustic transmission problems to a non-local equation on a \emph{domain
    skeleton} (interfaces \rhc{of a Lipschitz partition} and the domain boundary); among
  them are the \emph{direct }and \emph{indirect formulation}, equations of \emph{first
  }and \emph{second kind, and symmetric} and \emph{non-symmetric couplings} for interface
  problems. However, not all of them lead to well-posed skeleton equations. The goal of
  this paper is to develop a transformation strategy for acoustic transmission problems
  with mixed boundary condition such that well-posedness will always be inherited from the
  well-posedness of the \rhc{boundary value problem}. This transformation will be based on
  a direct formulation by Green's representation formula to express the homogeneous
  solutions in the subdomains via their Cauchy data, i.e., by their traces and co-normal
  traces on the subdomain boundaries.  Green's formula is typically based on explicit
  expressions for the fundamental solution for the differential operator and boils down to
  a linear combination of the single layer and double layer potential. However, the
  explicit expressions for fundamental solutions are known only for very special
  configurations which include full and half space problems and constant coefficients. A
  semi-explicit representation by Sommerfeld-type integrals exist for half space problems
  with impedance boundary conditions or full space problems for layered media and
  piecewise constant coefficients \rhc{\cite{hoernig2010green,Hein_Nedelec_green}} while
  for more general domains and varying coefficients the explicit form of the fundamental
  solution is unknown.

  The main goals of this paper are
  \begin{enumerate}[label={\alph{*})}]
  \item to represent the solution of a homogeneous acoustic PDE with very general
    coefficients as a linear combination of generalized layer potentials acting on the
    Cauchy traces of the solution and we will present an appropriate definition of these
    layer potentials, 
  \item to derive a \emph{non-local single-trace skeleton equation} for the unknown Cauchy
    data such that well-posedness is inherited from the PDE, and
  \item \rhc{to provide fully $s$-explicit stability estimates for that new equation; we will elaborate
    the dependence on $s$ in all estimates of the arising operators and solutions.}
  \end{enumerate}

  In this way, the question of finding a representation for a fundamental solution is
  decoupled from the transformation method: once a fundamental solution or an
  approximation to it is available it can be used for an integral representation of the
  relevant layer potentials resulting in a stable non-local single-trace skeleton
  \emph{integral equation}.}
\medskip

\textbf{Main contributions.} Usually, Green's representation formula contains the
fundamental solution of the underlying PDE explicitly and, hence, in literature the
arising boundary integral equations are usually considered for cases where the fundamental
solution is known explicitly. Our approach to transforming the PDE to a non-local equation
on the skeleton does not rely on fundamental solutions; neither their existence nor an
explicit form is required. Instead, we define the layer potentials directly via the
variational form of the PDE as solutions of appropriate variational problems. We derive
jump relations for these abstract potentials, Green's representation formula, and
non-local skeleton operators which allow us to define the Calder\'{o}n operator. We show
how coercivity of the sesquilinear form on the skeleton can be derived directly from the
coercivity of the PDE.

While the definition of the single layer potential as the solution of the variational form
of the full space PDE for certain types of right-hand side is standard and applies also to
elliptic PDEs with variable coefficients, the definition of the double layer potential is
more delicate. Various (equivalent) definitions exist in literature for certain types
of elliptic PDEs and we briefly review some of them:

\begin{enumerate}[label={\arabic{*})}]
\item If the fundamental solution, say $G\left(  \mathbf{x},\mathbf{y}\right)  $,
of the differential operator is known the double layer potential can be
defined as an integral over the skeleton of the co-normal derivative of $G$
convoluted with a boundary density -- first for sufficiently regular boundary
functions and then by continuous extension as a mapping between appropriate
Sobolev spaces. The analysis of the double layer potential (mapping
properties/jump relations, etc.) is then derived from properties of the
fundamental solution. However, if the fundamental solution is not known
explicitly as, e.g., for variable $L^{\infty}$ coefficients the analysis is
far from trivial.

\item For problems with constant coefficients the double layer potential can be
defined as the composition of the full space solution operator (acoustic
Newton potential) with the dual of the co-normal derivative. However, this
dual co-normal derivative maps into a space which is larger than the natural
domain of the Newton potential. For PDEs with constant coefficients this
problem can be solved since it is known that the Newton potential satisfies
some regularity shift properties. For variable $L^{\infty}$ coefficients this
is a subtle issue.

\item
 In \cite{CostabelElemRes}, the case of $C^{\infty}$- coefficients is
  considered. First the double layer potential is introduced as explained in 1); then a
  regularity shift theorem from \cite{necas} is employed to directly derive a Green's
  representation formula. This Green's formula can then be used as an alternative
  definition of the double layer potential.

\item The definition in \cite[(4.5)]{Barton_potential} expresses the double layer
  potential as a composition of a trace lifting of the boundary density with the
  differential operator and the Newton potential and thus avoids both, the explicit
  knowledge of the fundamental solution and the range space of the dual co-normal
  derivative. Although the analysis of the double layer potential can be based on the
  mature theory of elliptic PDEs, it seems that our new definition allows for a much more
  straightforward analysis.
\end{enumerate}

Our \textbf{new approach} defines the double layer potential as the solution of an
\emph{ultra-weak variational formulation} of the full space PDE with a certain type of right-hand
sides. This definition allows us to derive directly the mapping properties, jump
relations, and representation formula from the underlying PDE. \medskip

{We derive the skeleton and Calder\'{o}n operators from this idea.
The formulation of acoustic transmission problems with \textit{constant}
coefficients in each subdomain and mixed boundary condition as skeleton
equations is a topic of active research in numerical analysis and we mention
the approaches via the \textit{multi-trace formulation }(see,
\cite{Hiptmair_multiple_trace}, \cite{ClHiptJH}), the \textit{single-trace
formulation} \cite{Cost_Steph_trans}, \cite{vonPetersdorff89},
\cite{Hipt_Spindler}, \cite{Natroshvili_BEM_variable},
\cite{LabarcaHiptmair2023}, and the related PMCHWT method in electromagnetics
(see \cite{harrington1989boundary}, \cite{PoggioMiller}). Our approach with
emphasis on transmission and mixed boundary conditions can be regarded as a
generalization of the recent paper \cite{EbFlHiSa} from the piecewise
constant coefficient case to general $L^{\infty}$ coefficients. Another
approach for problems with variable coefficients is based on the use of a
parametrix instead of the unknown Green's function and presented in
\cite{Natroshvili_BEM_variable}.
We also generalize \cite{ClHiptJH} by allowing for unbounded domains (full space/half space), variable coefficients in the
  subdomains, and do not require the explicit knowledge of a Green's function. We also
  generalize the stability theory for the Calder\'{o}n operator developed in
  \cite{banjai_coupling} (see also the monograph \cite{sayas_book}) to variable
  coefficients in the principal and zeroth order part of the differential equation. The
  estimates for the layer potentials, Calder\'{o}n operators, and skeleton operators are
  explicit with respect to the wave number ${s}$ and generalize the known estimates for
  problems with piecewise constant coefficients (see, e.g., \cite{bambduong},
  \cite{LaSa2009}, \cite{EbFlHiSa}).}

\rhc{%
\begin{remark}
  We emphasize that this work is meant to be a contribution to the theory of partial
  differential equations. Nevertheless, our new non-local single-trace skeleton equations
  may be the foundation of numerical methods, but this will require the representations of
  the layer potentials as integral operators acting on trace on the skeleton. This
  representation might be explicit, semi-explicit via Fourier- or Hankel transforms, might
  be given by an asymptotic series or, alternatively, a parametrix can be employed.
\end{remark}}

\textbf{Outline.} The paper is structured as follows. In Section \ref{SecSetting} we
formulate the acoustic transmission problem with mixed boundary conditions. This requires
the introduction of a domain partitioning and its skeleton, the definition of one-sided
trace operators as well as the jumps and means of piecewise regular functions. The
transmission problem is formulated in (\ref{generalTP}) and defines the starting point for
the various steps in the derivation of the non-local skeleton equations.

In Section \ref{SecGRF}, we derive Green's representation formula in an abstract way. We
consider the homogeneous PDE on a subdomain as well as on its complement domain in
$\mathbb{R}^{3}$ (with extended coefficients) and formulate auxiliary variational full
space problems which are coercive and continuous. The single layer potential is defined as
the solution operator for a distribution (density) located on the interface (see
(\ref{elljsS})); the explicit knowledge of a fundamental solution is not required. We
present a new and simple definition of the double layer potential as the solution of an
ultra-weak variational full space problem for a certain type of right-hand sides. With
these layer potentials at hand we prove a Green's representation formula on both
subdomains (Lemma \ref{LemGreen}) as well as jump relations for both layer potentials.

Section \ref{SecCaldOp} is devoted to the definition of the non-local skeleton operators
$\mathsf{V}$, $\mathsf{K}$, $\mathsf{K}^{\prime}$, $\mathsf{W}$ which are used to build
the Calder\'{o}n operator. The important projection property for the Calder\'{o}n operator
is derived in Lemma \ref{LemCaldProj}.

In Section \ref{SecMultiSingleTraceForm} we define the free single trace space
$\mathbb{X}^{\operatorname{single}}$ on the skeleton and the one with incorporated
boundary conditions $\mathbb{X}_{0}^{\operatorname{single}}$.  Then, the non-local
skeleton equation is formulated in (\ref{singskeleq}) as a variational problem with energy
space $\mathbb{X}_{0}^{\operatorname{single}}%
$. The remaining part of this section is devoted to the analysis of the skeleton equation
and leads to its well-posedness, formulated in Theorem \ref{ThmWellPosedSkeletonEq}.

We summarize our main achievement in the concluding Section \ref{SecConclusion} and give
comments on some straightforward extensions of this integral equation method.

In Appendix \ref{AppProof} we give the proof of $s$-explicit coercivity and
continuity estimates for the boundary integral operators and layer potentials. Since the
arguments are very similar to those in \cite[Prop.  16, 19]{LaSa2009} and
\cite[Lem. 5.2]{Barton_potential} we have shifted this proof to the appendix.

{A list of notation is assembled at the end of the paper.}

\section{Setting\label{SecSetting}}

In this section we give details about the acoustic transmission problem. First, we
introduce the appropriate Sobolev spaces, standard trace operators, and co-normal
derivatives. Then we specify assumptions on the coefficients of the problem and formulate
boundary and decay conditions. We write
$\mathbb{R}_{>0}:=\left\{ x\in\mathbb{R}\mid x>0\right\} $, and
$\mathbb{C}_{>0}:=\left\{ z\in\mathbb{C}\mid\operatorname{Re} z>0\right\} $, respectively.

\subsection{Function spaces\label{SubSecFuSp}}

Let $\omega\subset\mathbb{R}^{3}$ be a bounded or unbounded Lipschitz domain
with (possibly empty) boundary $\partial\omega$.
{Let $L^{p}\left(  \omega\right)  $, $1\leq p\leq\infty$, be the usual Lebesgue
spaces with norm $\left\Vert \cdot\right\Vert _{L^{p}\left(  \omega\right)  }%
$. For $k\geq0$, the classical Sobolev space $H^{k}\left(  \omega\right)  $
consists of all functions whose $k$-th weak derivatives are square-integrable;
its norm is denoted by $\left\Vert \cdot\right\Vert _{H^{k}\left(
\omega\right)  }$.}
For $k\geq0$, we denote by $H_{0}^{k}\left(
\omega\right)  $ the closure of the space of infinitely smooth functions with
compact support in $\omega$ with respect to the $H^{k}\left(  \omega\right)  $
norm. Its dual space is denoted by $H^{-k}\left(  \omega\right)  :=\left(
H_{0}^{k}\left(  \omega\right)  \right)  ^{\prime}$. Vector- and tensor valued
versions of the Lebesgue spaces are denoted by $\mathbf{L}^{p}\left(
\omega\right)  :=L^{p}\left(  \omega\right)  ^{3}$ and $\mathbb{L}^{p}\left(
\omega\right)  :=L^{p}\left(  \omega\right)  ^{3\times3}$ with
norm $\left\Vert \cdot\right\Vert _{\mathbf{L}^{p}\left(  \omega\right)  }$
and $\left\Vert \cdot\right\Vert _{\mathbb{L}^{p}\left(  \omega\right)  }$,
respectively and we use an analogous notation for vector and tensor valued
Sobolev spaces.
For $p=2$, these spaces are Hilbert spaces with scalar product $\left(
\cdot,\cdot\right)  _{L^{2}\left(  \omega\right)  }$, $\left(  \cdot
,\cdot\right)  _{\mathbf{L}^{2}\left(  \omega\right)  }$, $\left(  \cdot
,\cdot\right)  _{\mathbb{L}^{2}\left(  \omega\right)  }$.
We also employ a \textquotedblleft frequency-dependent\textquotedblright%
\ $H^{1}\left(  \omega\right)  $ norm and define for $s\in\mathbb{C}%
\backslash\left\{  0\right\}  $%
\begin{equation}
\left\Vert v\right\Vert _{H^{1}\left(  \omega\right)  ;s}:=\left(  \left\Vert
\nabla v\right\Vert _{\mathbf{L}^{2}\left(  \omega\right)  }^{2}+\left\vert
s\right\vert ^{2}\left\Vert v\right\Vert _{L^{2}\left(  \omega\right)  }%
^{2}\right)  ^{1/2}. \label{fs_norm}%
\end{equation}
{Note that the usual norm $\left\Vert
\cdot\right\Vert _{H^{1}\left(  \omega\right)  }$ coincides with $\left\Vert
\cdot\right\Vert _{H^{1}\left(  \omega\right)  ;s}$ for $|s|=1$.}
The space $\mathbf{H}\left(  \omega,\operatorname*{div}\right)  $ is given by%
\begin{equation}
\mathbf{H}\left(  \omega,\operatorname*{div}\right)  :=\left\{  \mathbf{w}%
\in\mathbf{L}^{2}\left(  \Omega\right)  \mid\operatorname*{div}\mathbf{w}\in
L^{2}\left(  \omega\right)  \right\}  . \label{defHomegadiv}%
\end{equation}

On the boundary of $\omega$, we define the Sobolev space $H^{\alpha}\left(
\partial\omega\right)  $, $\alpha\geq0$, in the usual way (see, e.g.,
\cite[pp. 98]{Mclean00}). Note that the range of $\alpha$ for which
$H^{\alpha}\left(  \partial\omega\right)  $ is defined may be limited,
depending on the global smoothness of the surface $\partial\omega$; for
Lipschitz surfaces, $\alpha$ can be chosen in the range $\left[  0,1\right]
$; for $\alpha<0$, the space $H^{\alpha}\left(  \partial\omega\right)  $ is
the dual of $H^{-\alpha}\left(  \partial\omega\right)  $.
{Revall that the Sobolev space $H^{1/2}\left(
\partial\omega\right)  $ is equipped with the Sobolev-Slobodeckij norm%
\[
\left\Vert v\right\Vert _{H^{1/2}\left(  \partial\omega\right)  }:=\left(
\left\Vert v\right\Vert _{L^{2}\left(  \partial\omega\right)  }^{2}%
+\int_{\partial\omega\times\partial\omega}\frac{\left\vert v\left(
\mathbf{x}\right)  -v\left(  \mathbf{y}\right)  \right\vert ^{2}}{\left\Vert
\mathbf{x}-\mathbf{y}\right\Vert ^{3}}d\Gamma_{\mathbf{x}}d\Gamma_{\mathbf{y}%
}\right)  ^{1/2}.
\]}

We write $\left\langle \cdot,\cdot\right\rangle _{\omega}$ for the
\textit{bilinear form}%
\begin{equation}
  \label{eq:2}
  \left\langle u,v\right\rangle _{\omega}:=\int_{\omega}uv\quad\text{so
    that\quad}\left(  u,v\right)  _{L^{2}\left(  \omega\right)  }=\left\langle
    u,\overline{v}\right\rangle _{\omega},
\end{equation}
and identify $\left\langle \cdot,\cdot\right\rangle _{\omega}$ with its
continuous extension to the duality pairing $H^{-k}\left(  \omega\right)
\times H_{0}^{k}\left(  \omega\right)  $. For $k\geq0$, the spaces
$H_{\operatorname*{loc}}^{k}\left(  \omega\right)  $ are defined based on smooth
and compactly-supported cutoff functions via%
\begin{equation}
  \label{eq:3}
  H_{\operatorname*{loc}}^{k}\left(  \omega\right)  :=\left\{  v:\chi v\in
    H^{k}\left(  \omega\right)  \text{ for all }\chi\in C_{0}^{\infty}\left(
      \mathbb{R}^{3}\right)  \right\}
\end{equation}
and the subscript \textquotedblleft$\operatorname*{loc}$\textquotedblright\ is
used in an analogue way also for other spaces.

Let $\mathbb{R}_{\operatorname*{sym}}^{3\times3}$ denote the set of real
symmetric $3\times3$ matrices. We denote by $\left\langle \cdot,\cdot
\right\rangle :\mathbb{C}^{3}\times\mathbb{C}^{3}\rightarrow\mathbb{C}$ the
bilinear form $\left\langle \mathbf{a},\mathbf{b}\right\rangle :=\sum_{\ell
=1}^{3}a_{\ell}b_{\ell}$ for $\mathbf{a}=\left(  a_{\ell}\right)  _{\ell
=1}^{3}\in\mathbb{C}^{3}$ and $\mathbf{b}=\left(  b_{\ell}\right)  _{\ell
=1}^{3}\in\mathbb{C}^{3}$. Clearly, this bilinear form is the standard
Euclidean scalar product if restricted to $\mathbb{R}^{3}\times\mathbb{R}^{3}%
$. Let $\mathbb{L}^{\infty}\left(  \omega,\mathbb{R}_{\operatorname*{sym}%
}^{3\times3}\right)  $ denote the space of all functions $\mathbb{B}%
:\omega\rightarrow\mathbb{R}_{\operatorname*{sym}}^{3\times3}$ whose
components belong to the Lebesgue space $L^{\infty}\left(  \omega\right)  $.
We define the \textit{spectral bounds} for $\mathbb{B}\in\mathbb{L}^{\infty
}\left(  \omega,\mathbb{R}_{\operatorname*{sym}}^{3\times3}\right)  $ and
$q\in L^{\infty}\left(  \omega,\mathbb{R}\right)  $ by%
\begin{subequations}
\label{spectralbounds}
\begin{gather}
  \lambda\left(  \mathbb{B}\right)   :=\underset{\mathbf{y}\in\omega
  }{\operatorname*{ess}\inf}%
  \inf_{\mathbf{v}\in\mathbb{R}^{3}\backslash\left\{  0\right\}  }%
  \frac{\left\langle \mathbb{B}\left(  \mathbf{y}\right)  \mathbf{v}%
      ,\mathbf{v}\right\rangle }{\left\langle \mathbf{v},\mathbf{v}\right\rangle
  }\leq\underset{\mathbf{y}\in\omega}{\operatorname*{ess}\sup}\sup
  _{\mathbf{v}\in\mathbb{R}^{3}\backslash\left\{  0\right\}  }\frac{\left\langle
      \mathbb{B}\left(  \mathbf{y}\right)  \mathbf{v},\mathbf{v}\right\rangle
  }{\left\langle \mathbf{v},\mathbf{v}\right\rangle }=:\Lambda\left(
    \mathbb{B}\right)  <\infty, \\
  \lambda\left(  q\right)   :=\underset{\mathbf{y}\in\omega}%
  {\operatorname*{ess}\inf}\;q\left(  \mathbf{y}\right)  \leq\underset
  {\mathbf{y}\in\omega}{\operatorname*{ess}\sup}\;q\left(  \mathbf{y}\right)
  =:\Lambda\left(  q\right)  <\infty.
\end{gather}
\end{subequations}%

\begin{definition}
\label{DefH1omegaB}Let%
\begin{align*}
L_{>0}^{\infty}\left(  \omega,\mathbb{R}\right)   &  :=\left\{  q\in
L^{\infty}\left(  \omega,\mathbb{R}\right)  \mid\lambda\left(  q\right)
>0\right\}  ,\\
\mathbb{L}_{>0}^{\infty}\left(  \omega,\mathbb{R}_{\operatorname*{sym}%
}^{3\times3}\right)   &  :=\left\{  \mathbb{B}\in\mathbb{L}^{\infty}\left(
\omega,\mathbb{R}_{\operatorname*{sym}}^{3\times3}\right)  \mid\lambda\left(
\mathbb{B}\right)  >0\right\}  .
\end{align*}
For $\mathbb{B}\in\mathbb{L}_{>0}^{\infty}\left(  \omega,\mathbb{R}%
_{\operatorname*{sym}}^{3\times3}\right)  $, the space $H^{1}\left(
\omega,\mathbb{B}\right)  $ is given by%
\[
H^{1}\left(  \omega,\mathbb{B}\right)  :=\left\{  u\in H^{1}\left(
\omega\right)  \mid\operatorname{div}\left(  \mathbb{B}\nabla u\right)  \in
L^{2}\left(  \omega\right)  \right\}
\]
and equipped with the graph norm%
\[
\left\Vert u\right\Vert _{H^{1}\left(  \omega,\mathbb{B}\right)  }:=\left(
\left\Vert u\right\Vert _{H^{1}\left(  \omega\right)  }^{2}+\left\Vert
\operatorname{div}\left(  \mathbb{B}\nabla u\right)  \right\Vert
_{L^{2}\left(  \omega\right)  }^{2}\right)  ^{1/2}.zzzz
\]

\end{definition}

\subsection{Differential operators\label{SubSecDiffOp}}

Next we describe our assumptions on the computational domain and its
partition. Let $\Omega\subset\mathbb{R}^{3}$ be a bounded or unbounded
Lipschitz domain with (possibly empty) boundary $\Gamma:=\partial\Omega$. We
assume that there is a finite partition of $\Omega$ consisting of disjoint
Lipschitz domains $\Omega_{j}$, $1\leq j\leq n_{\Omega}$, {with boundaries}
$\Gamma_{j}:=\partial\Omega_{j}$, which satisfy $\overline{\Omega}=%
{\textstyle\bigcup\nolimits_{j=1}^{n_{\Omega}}}
\overline{\Omega_{j}}$. The subdomains are collected in the partition
$\mathcal{P}_{\Omega}=\left\{  \Omega_{j}:1\leq j\leq n_{\Omega}\right\}  $.
The intersection of the boundaries $\partial\Omega_{j}$ and $\partial
\Omega_{k}$ is denoted by $\Gamma_{j,k}:=\partial\Omega_{j}\cap\partial
\Omega_{k}$. The \textit{skeleton} of this partition is given by $\Sigma:=%
{\displaystyle\bigcup\limits_{j=1}^{n_{\Omega}}}
\partial\Omega_{j}$. To unify notation, we write $\Omega_{j}^{-}:=\Omega_{j}$
and set $\Omega_{j}^{+}:=\mathbb{R}^{3}\backslash\overline{\Omega_{j}^{-}}$.

We consider mixed Dirichlet and Neumann boundary conditions on $\partial
\Omega$. In this way, we split
\begin{equation}
\partial\Omega=\Gamma_{\operatorname*{D}}\cup\Gamma_{\operatorname*{N}}
\label{GammaDGammaN}%
\end{equation}
and assume the relative interiors of these subsets are disjoint.

In the subdomains $\Omega_{j}\in\mathcal{P}_{\Omega}$, we consider partial
differential equations and formulate appropriate assumptions on the
coefficients next.

\begin{assumption}
\label{Acoeff}For any $1\leq j\leq n_{\Omega}$ we are given coefficients that satisfy
\begin{enumerate}
\item $\mathbb{A}_{j}^{-}\in\mathbb{L}_{>0}^{\infty}\left(  \Omega
_{j},\mathbb{R}_{\operatorname*{sym}}^{3\times3}\right)  $ and $\mathbb{A}%
_{j}^{-}$ can be extended to some $\mathbb{A}_{j}^{\operatorname*{ext}}%
\in\mathbb{L}_{>0}^{\infty}\left(  \mathbb{R}^{3},\mathbb{R}%
_{\operatorname*{sym}}^{3\times3}\right)  $,

\item $p_{j}^{-}\in L_{>0}^{\infty}\left(  \Omega_{j},\mathbb{R}\right)  $ and
$p_{j}^{-}$ can be extended to some $p_{j}^{\operatorname*{ext}}\in
L_{>0}^{\infty}\left(  \mathbb{R}^{3},\mathbb{R}\right)  $,

\item $s\in\mathbb{C}_{>0}$ and $\left\vert s\right\vert \geq s_{0}$ for some
$s_{0}>0$.
\end{enumerate}
\end{assumption}

We exclude a neighborhood of $0$ for the frequencies $s\in\mathbb{C}$ since our focus is
on the high-frequency behavior. Note that the constants in our estimates depend
continuously on $s_{0}$ and, possibly, deteriorate as $s_{0}\rightarrow0$.

For $\sigma\in\left\{ +,-\right\} $, we formally define the differential operators:%
\begin{equation}
\mathsf{L}_{j}^{\sigma}\left(  s\right)  w:=-\operatorname{div}\left(
\mathbb{A}_{j}^{\sigma}\nabla w\right)  +s^{2}p_{j}^{\sigma}w\quad\text{in
}\Omega_{j}^{\sigma}, \label{defDiffOpsigma}%
\end{equation}
where%
\begin{equation}
\mathbb{A}_{j}^{\sigma}:=\left.  \mathbb{A}_{j}^{\operatorname*{ext}%
}\right\vert _{\Omega_{j}^{\sigma}}\quad\text{and\quad}p_{j}^{\sigma}:=\left.
p_{j}^{\operatorname*{ext}}\right\vert _{\Omega_{j}^{\sigma}}\text{\quad
}\sigma\in\left\{  +,-\right\}  . \label{defajpjpm}%
\end{equation}
The differential equation on the subdomain $\Omega_{j}$ is given by%
\begin{equation}
\mathsf{L}_{j}^{-}\left(  s\right)  u_{j}=0\qquad\text{in }\Omega_{j}.
\label{homPDEGreen1}%
\end{equation}

\begin{remark}
  Time harmonic wave propagation \emph{with absorption} can be described in the simplest
  case by a Helmholtz equation with wave number (frequency parameter) $s$ of positive real
  part. Such problems arise in many applications such as, e.g., in viscoelastodynamics for
  materials with damping (see, e.g., \cite{Achenbach2005}), in electromagnetism for wave
  propagation in lossy media (see, e.g., \cite{Jackson02engl}) and in nonlinear optics
  (see, e.g., \cite{sauter1996nonlinear}). The Helmholtz equation for complex wave numbers
  also arises within the popular convolution quadrature method for solving time depending
  wave propagation problems and within some iterative algorithms for solving the linear
  system for the Helmholtz equation (see, e.g., \cite[\S 2]{brm2019variable} for a more
  detailed description of applications).
\end{remark}

\begin{remark}
  \label{RemAgiven}
  Typically, the coefficients $\mathbb{A}_{j}^{-}$, $p_{j}^{-}$ are the restrictions of
  some given global coefficients $\mathbb{A}%
  \in\mathbb{L}_{>0}^{\infty}\left( \mathbb{R}^{3},\mathbb{R}%
    _{\operatorname*{sym}}^{3\times3}\right) $,
  $p\in L_{>0}^{\infty}\left( \mathbb{R}^{3},\mathbb{R}\right) $. Then, the choice
  $\mathbb{A}%
  _{j}^{\operatorname*{ext}}:=\mathbb{A}$ is admissible and seems to be natural.  In some
  practical applications, a different choice might be \textquotedblleft
  simpler\textquotedblright\ and preferable. For instance, if the global coefficient
  $\mathbb{A}$ is constant on the subdomains $\Omega_{j}$ and given by a positive definite
  matrix $\mathbb{A}_{j}^{-}\in\mathbb{R}%
  _{\operatorname*{sym}}^{3\times3}$ and $p_{j}^{-}$ is also constant, then, the choice of
  $\mathbb{A}_{j}^{\operatorname*{ext}}$ and $p_{j}%
  ^{\operatorname*{ext}}$ as the constant extensions of $\mathbb{A}_{j}^{-}$, $p_{j}^{-}$
  are preferable since the Green's function is explicitly known in these cases (see, e.g.,
  \cite[(3.1.3)]{SauterSchwab2010}). However, in our abstract setting the existence or
  explicit knowledge of the Green's function is not needed and, hence, the concrete choice
  of the extension is irrelevant.
  \rhc{Of course,} the single layer and double layer operators will depend on the chosen
  extension; however the key point is that their combination in a Green's representation
  formula always represents a homogeneous solution in the corresponding subdomain as will
  be shown in Lemma \ref{LemGreen}.
\end{remark}

\subsection{Traces and jumps}

Next, we introduce \textit{jumps} and \textit{means }of functions across the
boundaries $\Gamma_{j}$; the index $j$ indicates that the two-dimensional
manifold $\Gamma_{j}$ is regarded from the domain $\Omega_{j}$.

The following trace operators along their properties are well known for
domains with compact boundary (see, e.g., \cite[Thm. 3.37, 3.38, Lem. 4.3,
Thm. 4.4]{Mclean00}, \cite[Thm. 2.5]{Girault86}). For domains with non-compact
boundary we refer to \cite[Thm. 2.3, Cor. 3.14, Lem. 2.6]{Mikhailov_traces_1}.
We define the one-sided co-normal derivatives for an abstract diffusion
coefficient $\mathbb{B}\in\mathbb{L}_{>0}^{\infty}\left(  \mathbb{R}%
^{3},\mathbb{R}_{\operatorname*{sym}}^{3\times3}\right)  $; in our
applications, this will be either $\mathbb{A}$ or $\mathbb{A}_{j}%
^{\operatorname*{ext}}$.

\begin{proposition}
\label{Proptrace}Let $\Omega$, $\Omega_{j}$, $\Omega_{j}^{\sigma}$, $1\leq
j\leq n_{\Omega}$, $\sigma\in\left\{  +,-\right\}  $, be as explained above.

\begin{enumerate}
\item For $\sigma\in\left\{  +,-\right\}  $, there exist linear one-sided
trace operators (\emph{Dirichlet trace})
\[
\gamma_{\operatorname*{D};j}^{\sigma}:H^{1}\left(  \Omega_{j}^{\sigma}\right)
\rightarrow H^{1/2}\left(  \Gamma_{j}\right)  ,
\]
which are the continuous extensions of the classical trace operators: for
$u\in C^{0}\left(  \overline{\Omega_{j}^{\sigma}}\right)  $, it holds
\[
\gamma_{\operatorname*{D};j}^{\sigma}u=\left.  u\right\vert _{\Gamma_{j}}.
\]
These operators are surjective and bounded%
\begin{equation}
\left\Vert \gamma_{\operatorname*{D};j}^{\sigma}\right\Vert _{H^{1/2}\left(
\Gamma_{j}\right)  \leftarrow H^{1}\left(  \Omega_{j}^{\sigma}\right)  }\leq
C_{\operatorname*{D}}. \label{traceest}%
\end{equation}

For $u\in H^{1}\left(  \mathbb{R}^{3}\right)  $, the one-sided traces
coincide, i.e.,%
\begin{equation}
\gamma_{\operatorname*{D};j}^{-}\left(  \left.  u\right\vert _{\Omega_{j}%
}\right)  =\gamma_{\operatorname*{D};j}^{+}\left(  \left.  u\right\vert
_{\Omega_{j}^{+}}\right)  \label{nojumptrace}%
\end{equation}
and we write short $\gamma_{\operatorname*{D};j}u$ for $\gamma
_{\operatorname*{D};j}^{\sigma}\left(  \left.  u\right\vert _{\Omega
_{j}^{\sigma}}\right)  $, $\sigma\in\left\{  -,+\right\}  $, in such cases.

\item For $\sigma\in\left\{  +,-\right\}  $, there exist linear one-sided
normal trace operators (\emph{normal trace})%
\[
\gamma_{\mathbf{n};j}^{\sigma}:\mathbf{H}\left(  \Omega_{j}^{\sigma
},\operatorname{div}\right)  \rightarrow H^{-1/2}\left(  \Gamma_{j}\right)
\]
which are continuous extensions of the classical normal trace: for $%
\mbox{\boldmath$ \psi$}%
^{\sigma} \in \mathbf{C}^{0} \left(  \overline{\Omega_{j}^{\sigma}}\right)  $, it
holds%
\[
\gamma_{\mathbf{n};j}^{-}\left(
\mbox{\boldmath$ \psi$}%
^{-}\right)  =\left\langle \left.
\mbox{\boldmath$ \psi$}%
^{-}\right\vert _{\Gamma_{j}},\mathbf{n}_{j}\right\rangle \quad\text{and\quad
}\gamma_{\mathbf{n};j}^{+}\left(
\mbox{\boldmath$ \psi$}%
^{+}\right)  =\left\langle \left.
\mbox{\boldmath$ \psi$}%
^{+}\right\vert _{\Gamma_{j}},-\mathbf{n}_{j}\right\rangle ,
\]
where $\mathbf{n}_{j}$ is the unit normal vector on $\Gamma_{j}$ pointing from
$\Omega_{j}^{-}$ into $\Omega_{j}^{+}$.
These operators are bounded
\begin{equation}
\left\Vert \gamma_{\mathbf{n};j}^{\sigma}\right\Vert _{H^{-1/2}\left(
\Gamma_{j}\right)  \leftarrow\mathbf{H}\left(  \Omega_{j}^{\sigma
},\operatorname*{div}\right)  }\leq C_{\mathbf{n}}. \label{normaltracebounded}%
\end{equation}

For $\mbox{\boldmath$ \psi$}\in\mathbf{H}\left(  \mathbb{R}^{3},\operatorname{div}\right)  $ the one-sided
normal traces in the fixed direction $\mathbf{n}_{j}$ coincide, more
precisely,%
\begin{equation}
  \gamma_{\mathbf{n};j}^{-}\left(  \left.
      \mbox{\boldmath$ \psi$}
    \right\vert _{\Omega_{j}^{-}}\right)  =-\gamma_{\mathbf{n};j}^{+}\left(
    \left.
      \mbox{\boldmath$ \psi$}
    \right\vert _{\Omega_{j}^{+}}\right)
\end{equation}
and we write short $\gamma_{\mathbf{n};j}%
\mbox{\boldmath$ \psi$} $ for $\gamma_{\mathbf{n};j}^{-}\left( \left.
    \mbox{\boldmath$ \psi$} \right\vert _{\Omega_{j}^{-}}\right) $.

\item Let $\mathbb{B}\in\mathbb{L}_{>0}^{\infty}\left(  \mathbb{R}%
^{3},\mathbb{R}_{\operatorname*{sym}}^{3\times3}\right)  $. For $\sigma
\in\left\{  +,-\right\}  $, $1\leq j\leq n_{\Omega}$, set $\mathbb{B}%
_{j}^{\sigma}:=\left.  \mathbb{B}\right\vert _{\Omega_{j}^{\sigma}}$. There
exist linear one-sided co-normal derivative operators (\emph{Neumann trace})
\[
\gamma_{\operatorname*{N};j}^{\mathbb{B},\sigma}:H^{1}\left(  \Omega
_{j}^{\sigma},\mathbb{B}_{j}^{\sigma}\right)  \rightarrow H^{-1/2}\left(
\Gamma_{j}\right)
\]
which are the continuous extensions of the classical co-normal derivatives:
for $u^{-}\in C^{1}\left(  \overline{\Omega_{j}^{-}}\right)  $ and $u^{+}\in
C^{1}\left(  \overline{\Omega_{j}^{+}}\right)  $ it holds%
\[
\gamma_{\operatorname*{N};j}^{\mathbb{B},-}u^{-}=\left\langle \mathbb{B}%
_{j}^{-}\nabla u^{-},\mathbf{n}_{j}\right\rangle \quad\text{and\quad}%
\gamma_{\operatorname*{N};j}^{\mathbb{B},+}u^{+}=\left\langle \mathbb{B}%
_{j}^{+}\nabla u^{+},-\mathbf{n}_{j}\right\rangle.
\]
These operators are bounded%
\[
\left\Vert \gamma_{\operatorname*{N};j}^{\mathbb{B},\sigma}\right\Vert
_{H^{-1/2}\left(  \Gamma_{j}\right)  \leftarrow H^{1}\left(  \Omega
_{j}^{\sigma},\mathbb{B}_{j}^{\sigma}\right)  }\leq C_{\operatorname*{N}}.
\]

For $u\in H^{1}\left(  \mathbb{R}^{3},\mathbb{B}\right)  $ the one-sided
co-normal derivatives in the fixed direction $\mathbf{n}_{j}$ coincide, more
precisely,%
\begin{equation}
\gamma_{\operatorname*{N};j}^{\mathbb{B},-}\left(  \left.  u\right\vert
_{\Omega_{j}^{-}}\right)  =-\gamma_{\operatorname*{N},j}^{\mathbb{B},+}\left(
\left.  u\right\vert _{\Omega_{j}^{+}}\right)  \label{continuitynormaltrace}%
\end{equation}
and we write short $\gamma_{\operatorname*{N};j}^{\mathbb{B}}u$ for
$\gamma_{\operatorname*{N};j}^{\mathbb{B},-}\left(  \left.  u\right\vert
_{\Omega_{j}^{-}}\right)  $.
\end{enumerate}
\end{proposition}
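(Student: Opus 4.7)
The proposition collects three parallel statements about the Dirichlet trace, the normal trace of an $\mathbf{H}(\operatorname{div})$ field, and the $\mathbb{B}$-co-normal (Neumann) trace, each in the two cases $\sigma=-$ (possibly bounded domain) and $\sigma=+$ (necessarily unbounded). My plan is to treat the bounded and unbounded cases separately and to reduce everything to classical trace theorems for compact-boundary Lipschitz domains via a localization argument.

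First, for those $\Omega_j^\sigma$ that are bounded Lipschitz domains, I would simply invoke the cited classical results: \cite[Thm.~3.37]{Mclean00} for the surjective and bounded Dirichlet trace $\gamma_{\operatorname*{D};j}^\sigma\colon H^1(\Omega_j^\sigma)\to H^{1/2}(\Gamma_j)$, and \cite[Thm.~2.5]{Girault86} together with Green's identity for the normal trace on $\mathbf{H}(\Omega_j^\sigma,\operatorname{div})$. For those $\Omega_j^\sigma$ whose boundary $\Gamma_j$ is non-compact (this can only happen for $\sigma=+$ when $\Omega_j$ touches a non-compact component of $\partial\Omega$, or for $\Omega_j^-$ itself being unbounded with non-compact boundary), I would appeal to \cite[Thm.~2.3, Cor.~3.14, Lem.~2.6]{Mikhailov_traces_1}, which upgrade the classical trace and normal trace theorems to unbounded Lipschitz domains. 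A compatible choice of Sobolev--Slobodeckij norm on $H^{1/2}(\Gamma_j)$ ensures the trace constants $C_{\operatorname{D}}$ and $C_{\mathbf{n}}$ are finite; these are obtained by a partition-of-unity argument localizing $\Gamma_j$ to a uniformly Lipschitz atlas.

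For the Neumann trace $\gamma_{\operatorname*{N};j}^{\mathbb{B},\sigma}$, since the fundamental solution of $\operatorname{div}(\mathbb{B}\nabla\cdot)$ is unavailable, I would \emph{define} it variationally. Given $u\in H^1(\Omega_j^\sigma,\mathbb{B}_j^\sigma)$, I pair with an arbitrary Dirichlet lifting of $\varphi\in H^{1/2}(\Gamma_j)$: pick any $v\in H^1(\Omega_j^\sigma)$ of compact support (using Mikhailov's surjectivity result and a suitable cutoff in the unbounded case) with $\gamma_{\operatorname*{D};j}^\sigma v=\varphi$, and set
\[
\langle \gamma_{\operatorname*{N};j}^{\mathbb{B},\sigma}u,\varphi\rangle_{\Gamma_j}
:=\pm\left(\int_{\Omega_j^\sigma}\langle\mathbb{B}_j^\sigma\nabla u,\nabla v\rangle
+\int_{\Omega_j^\sigma}\operatorname{div}(\mathbb{B}_j^\sigma\nabla u)\,v\right),
\]
with sign $+$ for $\sigma=-$ and $-$ for $\sigma=+$. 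Independence of the lifting follows from the density of $C_0^\infty(\Omega_j^\sigma)$ in $H_0^1(\Omega_j^\sigma)$ and the definition of $\operatorname{div}(\mathbb{B}_j^\sigma\nabla u)\in L^2(\Omega_j^\sigma)$; the bound by $C_{\operatorname{N}}\|u\|_{H^1(\Omega_j^\sigma,\mathbb{B}_j^\sigma)}$ is immediate from Cauchy--Schwarz combined with the bounded right inverse of $\gamma_{\operatorname*{D};j}^\sigma$. Consistency with the classical formula $\langle\mathbb{B}_j^\sigma\nabla u,\pm\mathbf{n}_j\rangle$ on $C^1(\overline{\Omega_j^\sigma})$ is then the classical Green identity.

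Finally, the continuity relations \eqref{nojumptrace} and \eqref{continuitynormaltrace} express the fact that the jump operators across $\Gamma_j$ vanish on globally regular functions. For the Dirichlet trace this is a density argument: $C_0^\infty(\mathbb{R}^3)$ is dense in $H^1(\mathbb{R}^3)$, and on smooth functions both $\gamma_{\operatorname*{D};j}^\pm$ coincide with the pointwise restriction, so the identity extends by continuity. For the normal and co-normal traces it follows analogously by combining the variational definition with integration by parts against a test function $v\in C_0^\infty(\mathbb{R}^3)$: the two subdomain contributions sum to the full-space integral, in which no boundary term survives, yielding the sign flip. The main delicate point, and the one I would be most careful about, is ensuring that the variational definition of $\gamma_{\operatorname*{N};j}^{\mathbb{B},\sigma}$ is lifting-independent and bounded uniformly in the unbounded case $\sigma=+$; this is where the compact-support cutoff of the lifting $v$ in conjunction with Mikhailov's uniform trace bounds becomes essential.
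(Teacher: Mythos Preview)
Your proposal is correct and matches the paper's approach: the paper does not give a proof of this proposition at all, but simply states that these trace results are well known and cites \cite[Thm.~3.37, 3.38, Lem.~4.3, Thm.~4.4]{Mclean00}, \cite[Thm.~2.5]{Girault86} for domains with compact boundary and \cite[Thm.~2.3, Cor.~3.14, Lem.~2.6]{Mikhailov_traces_1} for the non-compact case. Your sketch, which invokes exactly these references and supplies the standard variational definition of the co-normal derivative (this is essentially what is in \cite[Lem.~4.3]{Mclean00}), is in fact more detailed than what the paper provides.
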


The one-sided Dirichlet and Neumann traces are collected in the Cauchy trace
operators $%
\mbox{\boldmath$ \gamma$}%
_{\operatorname*{C};j}^{\mathbb{B},\sigma}:H^{1}\left(  \Omega_{j}^{\sigma
},\mathbb{B}_{j}^{\sigma}\right)  \rightarrow H^{1/2}\left(  \Gamma
_{j}\right)  \times H^{-1/2}\left(  \Gamma_{j}\right)  $ given by%
\begin{equation}%
\mbox{\boldmath$ \gamma$}%
_{\operatorname*{C};j}^{\mathbb{B},\sigma}:=\left(  \gamma_{\operatorname*{D}%
;j}^{\sigma},\gamma_{\operatorname*{N};j}^{\mathbb{B},\sigma}\right)  .
\label{defboldgammaform}%
\end{equation}
For $u\in H^{1}\left(  \mathbb{R}^{3},\mathbb{B}\right)  $ and $u^{\sigma
}:=\left.  u\right\vert _{\Omega_{j}^{\sigma}}$, $\sigma\in\left\{
+,-\right\}  $, the one-sided Cauchy traces satisfy $\left(  \gamma
_{\operatorname*{D};j}^{-}u^{-},\gamma_{\operatorname*{N};j}^{\mathbb{B}%
,-}u^{-}\right)  =\left(  \gamma_{\operatorname*{D};j}^{+}u^{+},-\gamma
_{\operatorname*{N};j}^{\mathbb{B},+}u^{+}\right)  $ and we write%
\begin{equation}%
\mbox{\boldmath$ \gamma$}%
_{\operatorname*{C};j}^{\mathbb{B}}:H^{1}\left(  \mathbb{R}^{3},\mathbb{B}%
\right)  \rightarrow H^{1/2}\left(  \Gamma_{j}\right)  \times H^{-1/2}\left(
\Gamma_{j}\right)  ,\quad%
\mbox{\boldmath$ \gamma$}%
_{\operatorname*{C};j}^{\mathbb{B}}u:=\left(  \gamma_{\operatorname*{D}%
;j}u,\gamma_{\operatorname*{N};j}^{\mathbb{B}}u\right)  . \label{DefCT}%
\end{equation}
We will also use versions of these operators which are scaled by a frequency
parameter $s\in\mathbb{C}_{>0}$ and set for $\sigma\in\left\{  +,-\right\}  $%
\begin{gather}%
\begin{array}[c]{lll}%
\gamma_{\operatorname*{D};j}^{\sigma}\left(  s\right)  :=s^{1/2}%
\gamma_{\operatorname*{D};j}^{\sigma}, & \gamma_{\mathbf{n};j}^{\sigma}\left(
s\right)  :=s^{-1/2}\gamma_{\mathbf{n};j}^{\sigma}, & \gamma
_{\operatorname*{N};j}^{\mathbb{B},\sigma}\left(  s\right)  :=s^{-1/2}%
\gamma_{\operatorname*{N};j}^{\mathbb{B},\sigma},
\\
\gamma_{\operatorname*{D};j}\left(  s\right)  :=s^{1/2}\gamma
_{\operatorname*{D};j}, & \gamma_{\mathbf{n};j}\left(  s\right)
:=s^{-1/2}\gamma_{\mathbf{n};j}, & \gamma_{\operatorname*{N};j}^{\mathbb{B}%
}\left(  s\right)  :=s^{-1/2}\gamma_{\operatorname*{N};j}^{\mathbb{B}},
\end{array}\\
\mbox{\boldmath$ \gamma$}_{\operatorname*{C};j}^{\mathbb{B},\sigma}\left(  s\right)  :=\left(
s^{1/2}\gamma_{\operatorname*{D};j}^{\sigma},s^{-1/2}\gamma_{\operatorname*{N}%
;j}^{\mathbb{B},\sigma}\right)  .
\label{gammadjs}%
\end{gather}

\begin{remark}
  It will turn out that the \textit{Calder\'{o}n operator} (see Def.  \ref{DefCaldOp}) for
  these scaled trace operators has a coercivity estimate which is better balanced with
  respect to the \textit{frequency parameter }$s$ compared to the Calder\'{o}n operator
  for the standard trace operators (see, e.g., \cite{banjai_coupling}).
\end{remark}

\begin{definition}
\label{DefJumpmean}Let $\mathbb{B}\in\mathbb{L}_{>0}^{\infty}\left(
\mathbb{R}^{3},\mathbb{R}_{\operatorname*{sym}}^{3\times3}\right)  $. For
$\sigma\in\left\{  +,-\right\}  $, $1\leq j\leq n_{\Omega}$, set
$\mathbb{B}_{j}^{\sigma}:=\left.  \mathbb{B}\right\vert _{\Omega_{j}^{\sigma}%
}$. For a function $u\in L^{2}\left(  \Omega\right)  $ with $\left.
u\right\vert _{\Omega_{j}^{\sigma}}\in H^{1}\left(  \Omega_{j}^{\sigma
},\mathbb{B}_{j}^{\sigma}\right)  $, the \emph{(Dirichlet) jump} and the
\emph{jump of the co-normal derivative (Neumann jump)} of $u$ across
$\Gamma_{j}$ are given by%
\begin{subequations}
\label{jumpdef}
\end{subequations}%
\begin{align}
\left[  u\right]  _{\operatorname*{D};j}  &  :=\gamma_{\operatorname*{D}%
;j}^{+}\left(  \left.  u\right\vert _{\Omega_{j}^{+}}\right)  -\gamma
_{\operatorname*{D};j}^{-}\left(  \left.  u\right\vert _{\Omega_{j}^{-}%
}\right)  ,\tag{%
\ref{jumpdef}%
a}\label{jumpdefa}\\
\left[  u\right]  _{\operatorname*{N};j}^{\mathbb{B}}  &  :=-\gamma
_{\operatorname*{N};j}^{\mathbb{B},+}\left(  \left.  u\right\vert _{\Omega
_{j}^{+}}\right)  -\gamma_{\operatorname*{N};j}^{\mathbb{B},-}\left(  \left.
u\right\vert _{\Omega_{j}^{-}}\right)  . \tag{%
\ref{jumpdef}%
b}\label{jumpdefb}%
\end{align}
For $s\in\mathbb{C}_{>0}$, the frequency-scaled versions are given by $\left[
u\right]  _{\operatorname*{D};j}\left(  s\right)  :=s^{1/2}\left[  u\right]
_{\operatorname*{D};j}$ and $\left[  u\right]  _{\operatorname*{N}%
;j}^{\mathbb{B}}\left(  s\right)  :=s^{-1/2}\left[  u\right]
_{\operatorname*{N};j}^{\mathbb{B}}.$

The $\emph{(Dirichlet)}$ \emph{mean }and the \emph{mean of the co-normal
derivative (Neumann mean)} across $\Gamma_{j}$ are given by%
\begin{subequations}
\label{meandef}
\end{subequations}%
\begin{align}
\{\!\!\{u\}\!\!\}_{\operatorname*{D};j}  &  :=\frac{1}{2}\left(
\gamma_{\operatorname*{D};j}^{+}\left(  \left.  u\right\vert _{\Omega_{j}^{+}%
}\right)  +\gamma_{\operatorname*{D};j}^{-}\left(  \left.  u\right\vert
_{\Omega_{j}^{-}}\right)  \right)  ,\tag{%
\ref{meandef}%
a}\label{meandefa}\\
\{\!\!\{u\}\!\!\}_{\operatorname*{N};j}^{\mathbb{B}}  &  :=\frac{1}{2}\left(
-\gamma_{\operatorname*{N};j}^{\mathbb{B},+}\left(  \left.  u\right\vert
_{\Omega_{j}^{+}}\right)  +\gamma_{\operatorname*{N};j}^{\mathbb{B},-}\left(
\left.  u\right\vert _{\Omega_{j}^{-}}\right)  \right)  . \tag{%
\ref{meandef}%
b}\label{meandefb}%
\end{align}
For $s\in\mathbb{C}_{>0}$, the frequency-scaled versions are given by
$\{\!\!\{u\}\!\!\}_{\operatorname*{D};j}\left(  s\right)  :=s^{1/2}%
\{\!\!\{u\}\!\!\}_{\operatorname*{D};j}$ and
$\{\!\!\{u\}\!\!\}_{\operatorname*{N};j}^{\mathbb{B}}\left(  s\right)
:=s^{-1/2}\{\!\!\{u\}\!\!\}_{\operatorname*{N};j}^{\mathbb{B}}$.\medskip
\end{definition}

We also need to formulate jump conditions on partial boundaries $\Gamma_{j,k}$
of the subdomains. For a measurable subset $M\subseteq\partial\Omega_{j}$ we
denote by $\left\vert M\right\vert $ its two-dimensional surface measure. Let
$\Omega_{j}$ and $\Omega_{k}$ be such that $\Gamma_{j,k}:=\Gamma_{j}\cap
\Gamma_{k}$ has positive surface measure. We define the Sobolev spaces%
\begin{gather}%
\begin{aligned}
H^{1/2}\left(  \Gamma_{j,k}\right)          & :=\left\{  \left.  \varphi\right\vert
_{\Gamma_{j,k}}:\varphi\in H^{1/2}\left(  \Gamma_{j}\right)  \right\}  , \\
\tilde{H}^{-1/2}\left(  \Gamma_{j,k}\right) & :=\left(  H^{1/2}\left(
\Gamma_{j,k}\right)  \right)  ^{\prime},                                 \\
\tilde{H}^{1/2}\left(  \Gamma_{j,k}\right)  & :=\left\{  \left.  \varphi
\right\vert _{\Gamma_{j,k}}:\varphi\in H^{1/2}\left(  \Gamma_{j}\right)
\wedge\varphi=0\text{ in }\Gamma_{j}\backslash\Gamma_{j,k}\right\},      \\
H^{-1/2}\left(  \Gamma_{j,k}\right)         & :=\left(  \tilde{H}^{1/2}\left(
\Gamma_{j,k}\right)  \right)  ^{\prime}.
\end{aligned}
\label{deftildespaces}%
\end{gather}

\begin{definition}
\label{DefMT}The \emph{multi trace space }$\mathbb{X}\left(  \mathcal{P}%
_{\Omega}\right)  $ for the partition $\mathcal{P}_{\Omega}$ is given by%
\[
\mathbb{X}\left(  \mathcal{P}_{\Omega}\right)  :=%
\BIGOP{\times}%
_{j=1}^{n_{\Omega}}\mathbf{X}_{j}\quad\text{with\quad}\mathbf{X}_{j}%
:=H^{1/2}\left(  \Gamma_{j}\right)  \times H^{-1/2}\left(  \Gamma_{j}\right)
,
\]
and equipped with the norm%
\begin{align*}
  \left\Vert\mbox{\boldmath$ \psi$}_{j}\right\Vert _{\mathbf{X}_{j}}&
  :=\left(  \left\Vert \psi_{\operatorname*{D};j}\right\Vert _{H^{1/2}\left(  \Gamma_{j}\right)  }^{2}+\left\Vert
\psi_{\operatorname*{N};j}\right\Vert _{H^{-1/2}\left(  \Gamma_{j}\right)
}^{2}\right)  ^{1/2} & \forall\mbox{\boldmath$ \psi$}_{j}=\left(  \psi_{\operatorname*{D};j},\psi_{\operatorname*{N};j}\right)
\in\mathbf{X}_{j},\\
\left\Vert\mbox{\boldmath$ \psi$}\right\Vert _{\mathbb{X}} & :=\left(
{\displaystyle\sum_{j=1}^{n_{\Omega}}}
\left\Vert\mbox{\boldmath$ \psi$}_{j}\right\Vert _{\mathbf{X}_{j}}^{2}\right)  ^{1/2} & \forall
\mbox{\boldmath$ \psi$}=\left(\mbox{\boldmath$ \psi$}_{j}\right)  _{j=1}^{n_{\Omega}}\in\mathbb{X}\left(  \mathcal{P}_{\Omega
}\right)  .
\end{align*}
\end{definition}

We seek the solution of our transmission problem in the space%
\[
\mathbb{H}^{1}\left(  \mathcal{P}_{\Omega},\mathbb{A}\right)  :=%
\BIGOP{\times}%
_{j=1}^{n_{\Omega}}H^{1}\left(  \Omega_{j},\mathbb{A}_{j}^{-}\right)
\]
(cf. Assumption \ref{Acoeff}, Remark \ref{RemAgiven}).

Then, for $\mathbf{u}\in%
\BIGOP{\times}%
_{j=1}^{n_{\Omega}}H^{1}\left(  \Omega_{j}\right)  $ and $\mathbf{w}%
\in\mathbb{H}^{1}\left(  \mathcal{P}_{\Omega},\mathbb{B}\right)  $ the jump
$\left[  \mathbf{u}\right]  _{\operatorname*{D};j,k}\in H^{1/2}\left(
\Gamma_{j,k}\right)  $ and the Neumann jump $\left[  \mathbf{w}\right]
_{\operatorname*{N};j,k}^{\mathbb{B}}\in H^{-1/2}\left(  \Gamma_{j,k}\right)
$ across $\Gamma_{j,k}:=\Gamma_{j}\cap\Gamma_{k}$ (and frequency-scaled
versions thereof) are defined by%
\begin{subequations}
\label{partjump}
\begin{align}
  \left[  \mathbf{u}\right]  _{\operatorname*{D};j,k}  &  :=\left.  \left(
      \gamma_{\operatorname*{D},j}^{-}u_{j}\right)  \right\vert _{\Gamma_{j,k}%
  }-\left.  \left(  \gamma_{\operatorname*{D},k}^{-}u_{k}\right)  \right\vert
  _{\Gamma_{j,k}}, & \left[  \mathbf{u}\right]  _{\operatorname*{D}%
    ;j,k}\left(  s\right)  & :=s^{1/2}\left[  \mathbf{u}\right]  _{\operatorname*{D}%
    ;j,k},\label{partjumpa}\\
  \left[  \mathbf{w}\right]  _{\operatorname*{N};j,k}^{\mathbb{B}}  &  :=\left.
    -\left(  \gamma_{\operatorname*{N},j}^{\mathbb{B},-}w_{j}\right)  \right\vert
  _{\Gamma_{j,k}}-\left.  \left(  \gamma_{\operatorname*{N},k}^{\mathbb{B}%
        ,-}w_{k}\right)  \right\vert _{\Gamma_{j,k}},& \left[  \mathbf{w}\right]
  _{\operatorname*{N};j,k}^{\mathbb{B}}\left(  s\right)  & :=s^{-1/2}\left[
    \mathbf{w}\right]  _{\operatorname*{N};j,k}^{\mathbb{B}}. \label{partjumpb}%
\end{align}
\end{subequations}%
We set $\left[  \mathbf{u}\right]_{\operatorname*{D};j,k}:=0$ and $\left[
\mathbf{w}\right]_{\operatorname*{N};j,k}^{\mathbb{B}}:=0$ if $\Gamma_{j,k}$
has zero surface measure or $j=k$.

Note that for coefficients $\mathbb{B}$ and functions $\mathbf{w}$ which are
piecewise sufficiently regular, the Neumann jump across $\Gamma_{j,k}$ can be
written as%
\begin{align*}
\left[  \mathbf{w}\right]  _{\operatorname*{N};j,k}^{\mathbb{B}}  &  =-\left.
\left\langle \gamma_{\operatorname*{D};j}^{-}\left(  \mathbb{B}\nabla
w_{j}\right)  ,\mathbf{n}_{j}\right\rangle \right\vert _{\Gamma_{j,k}}-\left.
\left\langle \gamma_{\operatorname*{D};k}^{-}\left(  \mathbb{B}\nabla
w_{k}\right)  ,\mathbf{n}_{k}\right\rangle \right\vert _{\Gamma_{j,k}}\\
&  =\left.  \left\langle \gamma_{\operatorname*{D};j}^{-}\left(
\mathbb{B}\nabla w_{j}\right)  -\gamma_{\operatorname*{D};k}^{-}\left(
\mathbb{B}\nabla w_{k}\right)  ,\mathbf{n}_{k}\right\rangle \right\vert
_{\Gamma_{j,k}}  =\left.  \left\langle \left[  \mathbb{B}\nabla\mathbf{w}\right]
_{\operatorname*{D};j,k},\mathbf{n}_{k}\right\rangle \right\vert
_{\Gamma_{j,k}},
\end{align*}
where we used $\mathbf{n}_{j}=-\mathbf{n}_{k}$ on $\Gamma_{j,k}$. Clearly
$\left[  \mathbf{u}\right]  _{\operatorname*{D};j,k}=-\left[  \mathbf{u}%
\right]  _{\operatorname*{D};k,j}$ depends on the ordering of the indices
$j,k$, while the Neumann jump is independent of it.

\begin{notation}
\label{NotBAext}We have defined co-normal derivatives, Neumann jumps, and
Neumann means for an abstract coefficient $\mathbb{B}\in\mathbb{L}%
_{>0}^{\infty}\left(  \Omega_{j},\mathbb{R}_{\operatorname*{sym}}^{3\times
3}\right)  $ and used a superscript $\mathbb{B}$ in the notation. In our
application, the choices $\mathbb{B}\leftarrow\mathbb{A}$ and $\mathbb{B}%
\leftarrow\mathbb{A}_{j}^{\operatorname*{ext}}$ will appear. To simplify
notation we skip the superscript $\mathbb{B}$ if $\mathbb{B}=\mathbb{A}$ and
write $\gamma_{\operatorname*{N};j}^{\sigma}$ short for $\gamma
_{\operatorname*{N};j}^{\mathbb{A},\sigma}$ and similar for analogous
quantities. If $\mathbb{B}=\mathbb{A}_{j}^{\operatorname*{ext}}$, we\ replace
the superscript by \textquotedblleft$\operatorname*{ext}$\textquotedblright%
\ and write $\gamma_{\operatorname*{N};j}^{\operatorname*{ext},\sigma}$ short
for $\gamma_{\operatorname*{N};j}^{\mathbb{A}_{j}^{\operatorname*{ext}}%
,\sigma}$ and in the same way for analogous quantities. This convention is
applied verbatim also to the notation of Cauchy traces.
\end{notation}

\subsection{Transmission problem}

Now we have collected all ingredients to state the acoustic transmission problem. Let
$\mathbb{A}\in\mathbb{L}_{>0}^{\infty}\left( \mathbb{R}%
  ^{3},\mathbb{R}_{\operatorname*{sym}}^{3\times3}\right) $ and
$p\in L_{>0}^{\infty}\left( \mathbb{R}^{3},\mathbb{R}\right) $ be given and let the
coefficients in (\ref{homPDEGreen1}) be defined by $\mathbb{A}_{j}%
^{-}:=\left.  \mathbb{A}\right\vert _{\Omega_{j}^{-}}$ and
$p_{j}^{-}:=\left.  p\right\vert _{\Omega_{j}^{-}}$\ such that Assumption \ref{Acoeff} is
satisfied. We do not require that the extensions $\mathbb{A}_{j}%
^{\operatorname*{ext}}$, $p_{j}^{\operatorname*{ext}}$ in Assumption \ref{Acoeff} coincide
with $\mathbb{A}$ (see Remark \ref{RemAgiven}).

The given excitation of the acoustic transmission problem consists of given
data on the skeleton as well as on the Dirichlet and Neumann parts
$\Gamma_{\operatorname*{D}}$ and $\Gamma_{\operatorname*{N}}$ of the boundary
(cf. (\ref{GammaDGammaN})). Let $%
\mbox{\boldmath$ \beta$}%
=\left(
\mbox{\boldmath$ \beta$}%
_{j}\right)  _{j=1}^{n_{\Omega}}\in\mathbb{X}\left(  \mathcal{P}_{\Omega
}\right)  $ with $%
\mbox{\boldmath$ \beta$}%
_{j}=\left(  \beta_{\operatorname*{D};j},\beta_{\operatorname*{N};j}\right)
\in\mathbf{X}_{j}$. For $1\leq j,k\leq n_{\Omega}$, define the jumps of $%
\mbox{\boldmath$ \beta$}%
$ across $\Gamma_{j,k}:=\Gamma_{j}\cap\Gamma_{k}$ by%
\[
\left[
\mbox{\boldmath$ \beta$}%
\right]  _{j,k}:=\left(  \left.  \beta_{\operatorname*{D};j}\right\vert
_{\Gamma_{j,k}}-\left.  \beta_{\operatorname*{D};k}\right\vert _{\Gamma_{j,k}%
},-\left.  \beta_{\operatorname*{N};j}\right\vert _{\Gamma_{j,k}}-\left.
\beta_{\operatorname*{N};k}\right\vert _{\Gamma_{j,k}}\right)
\]
if $j\neq k$ and $\Gamma_{j}\cap\Gamma_{k}$ has positive surface measure.
Otherwise, we set $\left[\mbox{\boldmath$ \beta$}\right]  _{j,k}:=0$.

Given data $\boldsymbol{\beta}\in \mathbb{X}\left(  \mathcal{P}_{\Omega}\right)$, 
the acoustic transmission problem with mixed boundary condition seeks
$\mathbf{u}=\left(  u_{j}\right)  _{j=1}^{n_{\Omega}}\in\mathbb{H}%
^{1}\left(  \mathcal{P}_{\Omega},\mathbb{A}\right)  $ such that%
\begin{equation}%
\begin{array}
[c]{ll}%
-\operatorname{div}\left(  \mathbb{A}_{j}\nabla u_{j}\right)  +s^{2}p_{j}%
u_{j}=0 & \text{in }\Omega_{j},\; 1\leq j\leq n_{\Omega},\\
\left(  \left[  \mathbf{u}\right]  _{\operatorname*{D};j,k}\left(  s\right)
,\left[  \mathbf{u}\right]  _{\operatorname*{N};j,k}^{\mathbb{A}}\left(
s\right)  \right)  =\left[
\mbox{\boldmath$ \beta$}%
\right]  _{j,k}, & 1\leq j,k\leq n_{\Omega},\\
\left.  \left(  \gamma_{\operatorname*{D};j}^{-}\left(  s\right)
u_{j}\right)  \right\vert _{\Gamma_{j}\cap\Gamma_{\operatorname*{D}}}=\left.
\beta_{\operatorname*{D};j}\right\vert _{\Gamma_{j}\cap\Gamma
  _{\operatorname*{D}}} & 1\leq j\leq n_{\Omega},\\
  \left.  \left(  \gamma_{\operatorname*{N}
;j}^{-}\left(  s\right)  u_{j}\right)  \right\vert _{\Gamma_{j}\cap
\Gamma_{\operatorname*{N}}}=\left.  \beta_{\operatorname*{N};j}\right\vert
_{\Gamma_{j}\cap\Gamma_{\operatorname*{N}}} & 1\leq j\leq n_{\Omega}.
\end{array}
\label{generalTP}%
\end{equation}

\begin{remark}
\label{RemIncWave}The inhomogeneity $%
\mbox{\boldmath$ \beta$}%
$ in (\ref{generalTP}) is given in some applications via an incident wave
$u_{\operatorname*{inc}}\in H_{\operatorname*{loc}}^{1}\left(  \mathbb{R}%
^{3},\mathbb{A}_{\nu}^{\operatorname*{ext}}\right)  $ for some fixed $\nu
\in\left\{  1,2,\ldots n_{\Omega}\right\}  $ which satisfies
$-\operatorname{div}\left(  \mathbb{A}_{\nu}^{\operatorname*{ext}}\nabla
u_{\operatorname*{inc}}\right)  +s^{2}p_{\nu}^{\operatorname*{ext}%
}u_{\operatorname*{inc}}=0$ in $\mathbb{R}^{3}$. If $\Omega$ is unbounded,
then typically, $\nu$ is chosen such that $\Omega_{\nu}$ is unbounded. In any
case, it is assumed that the Cauchy trace of $u_{\operatorname*{inc}}$ is well
defined, more precisely, (at least) one of the following two conditions is required:

\begin{enumerate}
\item $%
\mbox{\boldmath$ \gamma$}%
_{\operatorname*{C};\nu}^{-}u_{\operatorname*{inc}}\in\mathbf{X}_{\nu},$

\item the function $u_{\operatorname*{inc}}$ belongs to $C^{1}\left(
\mathbb{R}^{3}\right)  $ and satisfies

\begin{enumerate}
\item the traces $\gamma_{\operatorname*{D};\nu}u_{\operatorname*{inc}}$ and
$\gamma_{\operatorname*{N};\nu}u_{\operatorname*{inc}}$ exist in the classical
pointwise sense,

\item the restrictions of the traces $\left.  \gamma_{\operatorname*{D};\nu
}u_{\operatorname*{inc}}\right\vert _{\Gamma_{\operatorname*{D}}}$ and
$\left.  \gamma_{\operatorname*{N};\nu}u_{\operatorname*{inc}}\right\vert
_{\Gamma_{\operatorname*{N}}}$ have compact supports.
\end{enumerate}
\end{enumerate}
\end{remark}

We will derive the well-posedness of this problem in Section
\ref{SecMultiSingleTraceForm} via layer potentials. For this goal, we will
present a general method to transform such acoustic transmission problems with
mixed boundary conditions and variable coefficients to a system of non-local
Calder\'{o}n operators on the skeleton, without relying on the explicit
knowledge of the Green's function. The resulting boundary integral
operators\footnote{We use here the traditional notion of \textit{boundary}
integral operators (instead of skeleton operators) since they are defined on
the \textit{subdomain} boundaries.} are coercive, self-dual and continuous
(Thm. \ref{ThmWellPosedSkeletonEq}) so that the Lax-Milgram theorem implies
well-posedness. In turn, well-posedness of the original formulation
(\ref{generalTP}) follows.

\section{Potentials and Green's formula\label{SecGRF}}

In the subdomains $\Omega_{j}\in\mathcal{P}_{\Omega}$, a function $u_{j}\in
H^{1}\left(  \Omega_{j},\mathbb{A}_{j}\right)  $ which satisfies the
homogeneous partial differential equation (\ref{homPDEGreen1}) can be
expressed in terms of its Cauchy trace via \textit{layer potentials}. In this
section, we introduce in a fairly standard way the Newton potential and the
single layer potential as solutions to coercive, full space PDEs in
variational form. We present a new definition for the double layer potential
as a solution of an ultra-weak variational problem. This allows us to derive
its mapping properties and jump relations from the theory of elliptic PDEs.
Finally, we derive a \textit{Green's representation formula} for our acoustic
transmission problem based on these potentials.

\subsection{Sesquilinear forms and associated operators}

Throughout this section we require that Assumption \ref{Acoeff} holds and
employ the notation%
\begin{align*}
\Omega_{j}^{-}&:=\Omega_{j}, & \Omega_{j}^{+}&:=\mathbb{R}^{3}\backslash
\overline{\Omega_{j}},\\
\mathbb{A}_{j}^{+}&:=\left.  \mathbb{A}_{j}^{\operatorname*{ext}}\right\vert
_{\Omega_{j}^{+}}, & p_{j}^{+}&:=\left.  p_{j}^{\operatorname*{ext}}\right\vert
_{\Omega_{j}^{+}}.
\end{align*}
We also need the piecewise gradient $\nabla_{\operatorname*{pw};j}$ which is given, for a
function $w\in H^{1}\left( \mathbb{R}^{3}\backslash\Gamma _{j}\right) $, by
\begin{equation}
\left.  \left(  \nabla_{\operatorname*{pw};j}w\right)  \right\vert
_{\Omega_{j}^{\sigma}}:=\nabla\left(  \left.  w\right\vert _{\Omega
_{j}^{\sigma}}\right)\;,\quad \sigma\in\left\{  -,+\right\}  \label{defpwgradient}%
\end{equation}
and considered as a function in $\mathbf{L}^{2}\left(  \mathbb{R}^{3}\right)
$.

\begin{definition}
\label{Defellsj}Let Assumption \ref{Acoeff} be satisfied. For $s\in
\mathbb{C}_{>0}$, the sesquilinear form%
\[
\ell_{j}\left(  s\right)  :H^{1}\left(  \mathbb{R}^{3}\right)  \times
H^{1}\left(  \mathbb{R}^{3}\right)  \rightarrow\mathbb{C}%
\]
is given by%
\[
\ell_{j}\left(  s\right)  \left(  u,v\right)  :=\left\langle \mathbb{A}%
_{j}^{\operatorname*{ext}}\nabla u,\overline{\nabla v}\right\rangle
_{\mathbb{R}^{3}}+s^{2}\left\langle p_{j}^{\operatorname*{ext}}u,\overline
{v}\right\rangle _{\mathbb{R}^{3}}\qquad\forall u,v\in H^{1}\left(
\mathbb{R}^{3}\right)  ,
\]
and the associated operator $\mathsf{L}_{j}\left(  s\right)  :H^{1}\left(
\mathbb{R}^{3}\right)  \rightarrow H^{-1}\left(  \mathbb{R}^{3}\right)  $ by%
\begin{equation}
\left\langle \mathsf{L}_{j}\left(  s\right)  u,\overline{v}\right\rangle
_{\mathbb{R}^{3}}:=\ell_{j}\left(  s\right)  \left(  u,v\right)  \qquad\forall
u,v\in H^{1}\left(  \mathbb{R}^{3}\right)  . \label{FF7ab}%
\end{equation}

\end{definition}

Next, we prove continuity and coercivity for the sesquilinear form
$\ell _{j}\left( s\right) \left( \cdot,\cdot\right) $ in the spirit of
\cite{bambduong}. We take pains to elaborate the explicit dependence of the constants on
$s$.

\begin{lemma}
\label{LemLaxMilgram}Let Assumption \ref{Acoeff} be satisfied. The
sesquilinear forms $\ell_{j}$ are continuous and coercive: for $\mu
:=s/\left\vert s\right\vert $ and for all $  $ holds
\begin{align}
  \label{eq:7}
  [c]{ll}%
  \left\vert \ell_{j}\left(  s\right)  \left(  v,w\right)  \right\vert
  & \leq\Lambda_{j}\left\Vert v\right\Vert _{H^{1}\left(  \mathbb{R}^{3}\right)
    ;s}\left\Vert w\right\Vert _{H^{1}\left(  \mathbb{R}^{3}\right)  ;s} \quad \forall v,w\in H^{1}\left(
    \mathbb{R}^{3}\right),\\
  \operatorname{Re}\ell_{j}\left(  s\right)  \left(  v,\mu v\right)  & \geq
  \lambda_{j}\frac{\operatorname{Re}s}{\left\vert s\right\vert }\left\Vert
    v\right\Vert _{H^{1}\left(  \mathbb{R}^{3}\right)  ;s}^{2}\quad \forall v\in H^{1}\left(
    \mathbb{R}^{3}\right),
\end{align}
with%
\begin{equation}
\lambda_{j}:=\min\left\{  \lambda_{j}\left(  p_{j}^{\operatorname*{ext}%
}\right)  ,\lambda_{j}\left(  \mathbb{A}_{j}^{\operatorname*{ext}}\right)
\right\}  \quad\text{and\quad}\Lambda_{j}:=\max\left\{  \Lambda_{j}\left(
p_{j}^{\operatorname*{ext}}\right)  ,\Lambda_{j}\left(  \mathbb{A}%
_{j}^{\operatorname*{ext}}\right)  \right\}  . \label{deflambdaj}%
\end{equation}
\end{lemma}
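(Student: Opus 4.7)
The statement is a direct computation: I unpack the sesquilinear form and track the dependence on $s$, using that $\mathbb{A}_j^{\operatorname{ext}}$ and $p_j^{\operatorname{ext}}$ are real. The only subtlety is the choice of test function $\mu v$ and the corresponding identity $\operatorname{Re}(\bar\mu)=\operatorname{Re}(s^2\bar\mu)/|s|^2=\operatorname{Re}(s)/|s|$, which is what forces both terms of $\ell_j(s)$ to line up with the same positive factor.

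For continuity I would start from the definition and estimate each summand by the pointwise bounds on the coefficients together with the Cauchy--Schwarz inequality in $L^2(\mathbb{R}^3)$:
\[
|\ell_j(s)(v,w)|\le \Lambda_j(\mathbb{A}_j^{\operatorname{ext}})\,\|\nabla v\|_{\mathbf{L}^2}\|\nabla w\|_{\mathbf{L}^2}
+|s|^2\Lambda_j(p_j^{\operatorname{ext}})\,\|v\|_{L^2}\|w\|_{L^2}.
\]
I then bound both summands by $\Lambda_j$ (taking the max of the two upper spectral bounds) and apply the discrete Cauchy--Schwarz inequality in $\mathbb{R}^2$ to the vectors $(\|\nabla v\|_{\mathbf{L}^2},|s|\|v\|_{L^2})$ and $(\|\nabla w\|_{\mathbf{L}^2},|s|\|w\|_{L^2})$ to recover the frequency-dependent $H^1(\mathbb{R}^3);s$ norms from (\ref{fs_norm}).

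For coercivity I plug $w=\mu v$ into $\ell_j(s)(v,w)$, so that $\overline{\mu v}=\bar\mu\bar v$ pulls the scalar $\bar\mu$ out of both integrals:
\[
\ell_j(s)(v,\mu v)=\bar\mu\langle\mathbb{A}_j^{\operatorname{ext}}\nabla v,\overline{\nabla v}\rangle_{\mathbb{R}^3}
+s^2\bar\mu\langle p_j^{\operatorname{ext}}v,\overline v\rangle_{\mathbb{R}^3}.
\]
Because $\mathbb{A}_j^{\operatorname{ext}}$ is real symmetric and $p_j^{\operatorname{ext}}$ is real, both integrals are real and non-negative; separating $\nabla v$ into real and imaginary parts and exploiting symmetry of $\mathbb{A}_j^{\operatorname{ext}}$ to cancel the cross terms, they are bounded from below by $\lambda_j(\mathbb{A}_j^{\operatorname{ext}})\|\nabla v\|_{\mathbf{L}^2}^2$ and $\lambda_j(p_j^{\operatorname{ext}})\|v\|_{L^2}^2$ via the spectral bounds (\ref{spectralbounds}). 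Taking real parts and using $\operatorname{Re}\bar\mu=\operatorname{Re}s/|s|$ together with $s^2\bar\mu=s|s|$ (so $\operatorname{Re}(s^2\bar\mu)=|s|^2\operatorname{Re}s/|s|$) yields the common prefactor $\operatorname{Re}s/|s|$ in front of the gradient term and of $|s|^2\|v\|_{L^2}^2$, from which the claimed lower bound with $\lambda_j$ follows directly from (\ref{fs_norm}).

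The only place where a minor pitfall lies is the algebraic identity that $s^2\bar\mu/|s|^2=\operatorname{Re}s/|s|$ in real part, which is what makes the two summands of $\ell_j(s)(v,\mu v)$ share the same positive weight $\operatorname{Re}s/|s|$; without multiplying by $\mu$ the $s^2$-factor would contribute $\operatorname{Re}(s^2)=\operatorname{Re}(s)^2-\operatorname{Im}(s)^2$, which can be negative and destroy coercivity for large imaginary part. Once this is noticed, the argument is otherwise routine, and I expect no further obstacles.
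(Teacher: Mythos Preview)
Your proposal is correct and follows essentially the same route as the paper: the paper also plugs in $w=\mu v$, pulls out $\bar\mu$, uses the real-valuedness and spectral bounds of $\mathbb{A}_j^{\operatorname{ext}}$ and $p_j^{\operatorname{ext}}$ to bound the two integrals from below, and then invokes the identities $\operatorname{Re}\bar\mu=\operatorname{Re}s/|s|$ and $\operatorname{Re}(s^2\bar\mu)=|s|\operatorname{Re}s$ to extract the common factor $\operatorname{Re}s/|s|$; continuity is handled in the paper by the same triangle inequality plus Cauchy--Schwarz argument you describe.
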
%

\begin{proof}
Fix $\mu=s/\left\vert s\right\vert $. For $v\in H^{1}\left(  \mathbb{R}%
^{3}\right)  $, it holds%
\begin{align}
\operatorname{Re}\ell_{j}\left(  s\right)  \left(  v,\mu v\right)   &
=\operatorname{Re}\left\langle s^{2}p_{j}^{\operatorname*{ext}}v,\overline{\mu
v}\right\rangle _{\mathbb{R}^{3}}+\operatorname{Re}\left\langle \mathbb{A}%
_{j}^{\operatorname*{ext}}\nabla v,\overline{\mu\nabla v}\right\rangle
_{\mathbb{R}^{3}}\label{ljcoercive}\\
&  \geq\lambda\left(  p_{j}^{\operatorname*{ext}}\right)  \operatorname{Re}%
\left(  s^{2}\overline{\mu}\right)  \left\Vert v\right\Vert _{L^{2}\left(
\mathbb{R}^{3}\right)  }^{2}+\lambda\left(  \mathbb{A}_{j}%
^{\operatorname*{ext}}\right)  \left(  \operatorname{Re}\mu\right)  \left\Vert
\nabla v\right\Vert _{\mathbf{L}^{2}\left(  \mathbb{R}^{3}\right)  }%
^{2}\nonumber\\
&  \geq\frac{\operatorname{Re}s}{\left\vert s\right\vert }\lambda
_{j}\left\Vert v\right\Vert _{H^{1}\left(  \mathbb{R}^{3}\right)  ;s}%
^{2}.\nonumber
\end{align}
To establish continuity, we use%
\begin{align*}
\left\vert \ell_{j}\left(  s\right)  \left(  v,w\right)  \right\vert  &
=\left\vert s^{2}\left\langle p_{j}^{\operatorname*{ext}}v,%
\overline{w}%
\right\rangle _{\mathbb{R}^{3}}\right\vert +\left\vert \left\langle
\mathbb{A}_{j}^{\operatorname*{ext}}\nabla v,%
\overline{\nabla w}%
\right\rangle _{\mathbb{R}^{3}}\right\vert \\
&  \leq\Lambda\left(  p_{j}^{\operatorname*{ext}}\right)  \left\vert
s\right\vert ^{2}\left\Vert v\right\Vert _{L^{2}\left(  \mathbb{R}^{3}\right)
}\left\Vert w\right\Vert _{L^{2}\left(  \mathbb{R}^{3}\right)  }%
+\Lambda\left(  \mathbb{A}_{j}^{\operatorname*{ext}}\right)  \left\Vert \nabla
v\right\Vert _{L^{2}\left(  \mathbb{R}^{3}\right)  }\left\Vert \nabla
w\right\Vert _{L^{2}\left(  \mathbb{R}^{3}\right)  }\\
&  \leq\Lambda_{j}\left\Vert v\right\Vert _{H^{1}\left(  \mathbb{R}%
^{3}\right)  ;s}\left\Vert w\right\Vert _{H^{1}\left(  \mathbb{R}^{3}\right)
;s}%
\end{align*}
for all $v,w\in H^{1}\left(  \mathbb{R}^{3}\right)  $.
\end{proof}

Since the right-hand side in the first equation of (\ref{generalTP}) is the
zero function we conclude that a solution $u_{j}$ belongs to $H^{1}\left(
\Omega_{j}^{-},\mathbb{A}_{j}^{-}\right)  $.

\begin{lemma}
[Green's identities]Let Assumption \ref{Acoeff} be satisfied and set
$\mathbb{A}_{j}^{\sigma}:=\left.  \mathbb{A}_{j}^{\operatorname*{ext}%
}\right\vert _{\Omega_{j}^{\sigma}}$, $p_{j}^{\sigma}:=\left.  p_{j}%
^{\operatorname*{ext}}\right\vert _{\Omega_{j}^{\sigma}}$ for $\sigma
\in\left\{  +,-\right\}  $.

\begin{enumerate}
\item For any $\sigma\in\left\{  +,-\right\}  $, assume that $v^{\sigma}\in
H^{1}\left(  \Omega_{j}^{\sigma},\mathbb{A}_{j}^{\sigma}\right)  $ satisfies%
\begin{equation}
\mathsf{L}_{j}^{\sigma}\left(  s\right)  v^{\sigma}=0\quad\text{in }\Omega
_{j}^{\sigma}. \label{lochom}%
\end{equation}
Then, the co-normal derivative of $v^{\sigma}$ satisfies%
\begin{equation}
  \begin{aligned}
    \left\langle \mathbb{A}_{j}^{\sigma}\nabla v^{\sigma},\nabla\overline
      {w}\right\rangle _{\Omega_{j}^{\sigma}}+\quad&\\
    s^{2}\left\langle p_{j}^{\sigma
      }v^{\sigma},\overline{w}\right\rangle _{\Omega_{j}^{\sigma}} & =\left\langle
      \gamma_{\operatorname*{N};j}^{\operatorname*{ext},\sigma}\left(  s\right)
      v^{\sigma},\gamma_{\operatorname*{D};j}^{\sigma}\left(  s\right)  \overline
      {w}\right\rangle _{\Gamma_{j}}\quad\forall w\in H^{1}\left(  \Omega
      _{j}^{\sigma}\right)  .
  \end{aligned}
  \label{Green1}%
\end{equation}

\item For $v\in H^{1}\left(  \mathbb{R}^{3}\right)  $, set $v^{\sigma
}:=\left.  v\right\vert _{\Omega_{j}^{\sigma}}$. Assume that $v^{\sigma}$
belongs to $H^{1}\left(  \Omega_{j}^{\sigma},\mathbb{A}_{j}^{\sigma}\right)  $
and satisfies (\ref{lochom}) for $\sigma\in\left\{  +,-\right\}  $. Then%
\begin{equation}
\ell_{j}\left(  s\right)  \left(  v,w\right)  =\left\langle -\left[  v\right]
_{\operatorname*{N};j}^{\operatorname*{ext}}\left(  s\right)  ,\gamma
_{\operatorname*{D};j}\left(  s\right)  \overline{w}\right\rangle _{\Gamma
_{j}},\qquad\forall w\in H^{1}\left(  \mathbb{R}^{3}\right)  .
\label{ljjumprel}%
\end{equation}

\item For $v\in L^{2}\left(  \mathbb{R}^{3}\right)  $, set $v^{\sigma
}:=\left.  v\right\vert _{\Omega_{j}^{\sigma}}$. Assume $v^{\sigma}\in
H^{1}\left(  \Omega_{j}^{\sigma},\mathbb{A}_{j}^{\sigma}\right)  $, $\left[
v\right]  _{\operatorname*{N};j}^{\operatorname*{ext}}=0$, and that
$v^{\sigma}$ satisfies, (\ref{lochom}). Then
\begin{equation}
\sum_{\sigma\in\left\{  +,-\right\}  }\left\langle \mathbb{A}_{j}^{\sigma
}\nabla v^{\sigma},\nabla\overline{w^{\sigma}}\right\rangle _{\Omega
_{j}^{\sigma}}+s^{2}\left\langle p_{j}^{\sigma}v^{\sigma},\overline{w^{\sigma
}}\right\rangle _{\Omega_{j}^{\sigma}}=\left\langle \gamma_{\operatorname*{N}%
;j}^{\operatorname*{ext}}\left(  s\right)  v,-\left[  \overline{w}\right]
_{\operatorname*{D};j}\left(  s\right)  \right\rangle _{\Gamma_{j}}
\label{ljconjump}%
\end{equation}
for any $w\in L^{2}\left(  \mathbb{R}^{3}\right)  $ with $w^{\sigma}:=\left.
w\right\vert _{\Omega_{j}^{\sigma}}\in H^{1}\left(  \Omega^{\sigma}\right)  $,
$\sigma\in\left\{  +,-\right\}  $.

\item Let $v^{\sigma},w^{\sigma}\in H^{1}\left(  \Omega_{j}^{\sigma
},\mathbb{A}_{j}^{\sigma}\right)  $. Then,%
\begin{align}
\left\langle v^{\sigma},\mathsf{L}_{j}^{\sigma}\left(  s\right)
\overline{w^{\sigma}}\right\rangle _{\Omega_{j}^{\sigma}}- \left\langle
\mathsf{L}_{j}^{\sigma}\left(  s\right)  v^{\sigma},\overline{w^{\sigma}%
}\right\rangle _{\Omega_{j}^{\sigma}} & =\left\langle \gamma_{\operatorname*{N}%
;j}^{\operatorname*{ext},\sigma}\left(  s\right)  v^{\sigma},\gamma
_{\operatorname*{D};j}^{\sigma}\left(  s\right)  \overline{w^{\sigma}%
                                          }\right\rangle _{\Gamma_{j}}\label{2ndGreen}\\
                                      &
                                        \quad-\left\langle \gamma_{\operatorname*{D};j}%
^{\sigma}\left(  s\right)  v^{\sigma},\gamma_{\operatorname*{N};j}%
^{\operatorname*{ext},\sigma}\left(  s\right)  \left(  \overline{w}^{\sigma
}\right)  \right\rangle _{\Gamma_{j}}.\nonumber
\end{align}

\end{enumerate}
\end{lemma}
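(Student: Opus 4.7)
The plan is to derive all four identities from the standard Green's first identity for the divergence-form operator on a Lipschitz subdomain, combined, where applicable, with the homogeneous PDE $\mathsf{L}_{j}^{\sigma}(s)v^{\sigma}=0$ to eliminate the divergence term. A recurring observation is that the frequency scalings cancel: since $\gamma_{\operatorname*{N};j}^{\operatorname*{ext}}(s)=s^{-1/2}\gamma_{\operatorname*{N};j}^{\operatorname*{ext}}$ and $\gamma_{\operatorname*{D};j}(s)=s^{1/2}\gamma_{\operatorname*{D};j}$, every pairing $\langle\gamma_{\operatorname*{N};j}^{\operatorname*{ext}}(s)\cdot,\gamma_{\operatorname*{D};j}(s)\cdot\rangle_{\Gamma_{j}}$ reduces to the unscaled one. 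Hence the $s$-factors play no role in the algebra and I only need to track the geometric signs coming from the outward normal (which flips between $\Omega_{j}^{-}$ and $\Omega_{j}^{+}$, but is already absorbed into the sign convention for $\gamma_{\operatorname*{N};j}^{\operatorname*{ext},\pm}$).

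For part 1, the Neumann trace of Proposition \ref{Proptrace} is, by its definition as the continuous extension of the classical co-normal derivative, characterized by Green's first identity
\[
\langle\mathbb{A}_{j}^{\sigma}\nabla v^{\sigma},\nabla\overline{w}\rangle_{\Omega_{j}^{\sigma}}-\langle\operatorname{div}(\mathbb{A}_{j}^{\sigma}\nabla v^{\sigma}),\overline{w}\rangle_{\Omega_{j}^{\sigma}}=\langle\gamma_{\operatorname*{N};j}^{\operatorname*{ext},\sigma}v^{\sigma},\gamma_{\operatorname*{D};j}^{\sigma}\overline{w}\rangle_{\Gamma_{j}}
\]
for all $v^{\sigma}\in H^{1}(\Omega_{j}^{\sigma},\mathbb{A}_{j}^{\sigma})$ and $w\in H^{1}(\Omega_{j}^{\sigma})$. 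The assumption $\mathsf{L}_{j}^{\sigma}(s)v^{\sigma}=0$ lets me substitute $\operatorname{div}(\mathbb{A}_{j}^{\sigma}\nabla v^{\sigma})=s^{2}p_{j}^{\sigma}v^{\sigma}$ (an identity in $L^{2}(\Omega_{j}^{\sigma})$), which immediately yields (\ref{Green1}) after inserting the $s$-scaling.

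Parts 2 and 3 follow by applying part 1 on both sides $\sigma\in\{+,-\}$ and summing. In part 2 the volume contributions combine into $\ell_{j}(s)(v,w)$, since $\mathbb{R}^{3}\setminus(\Omega_{j}^{-}\cup\Omega_{j}^{+})=\Gamma_{j}$ has Lebesgue measure zero; on the boundary side, (\ref{nojumptrace}) identifies $\gamma_{\operatorname*{D};j}^{-}\overline{w}=\gamma_{\operatorname*{D};j}^{+}\overline{w}$, so I factor out $\gamma_{\operatorname*{D};j}(s)\overline{w}$ and collect the co-normal traces as $\gamma_{\operatorname*{N};j}^{\operatorname*{ext},+}(s)v^{+}+\gamma_{\operatorname*{N};j}^{\operatorname*{ext},-}(s)v^{-}=-[v]_{\operatorname*{N};j}^{\operatorname*{ext}}(s)$ by (\ref{jumpdefb}). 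Part 3 is the dual picture: the vanishing Neumann jump gives $\gamma_{\operatorname*{N};j}^{\operatorname*{ext},+}(s)v^{+}=-\gamma_{\operatorname*{N};j}^{\operatorname*{ext}}(s)v$, so this time I factor out $\gamma_{\operatorname*{N};j}^{\operatorname*{ext}}(s)v$, and the Dirichlet traces assemble into $\gamma_{\operatorname*{D};j}^{-}(s)\overline{w^{-}}-\gamma_{\operatorname*{D};j}^{+}(s)\overline{w^{+}}=-[\overline{w}]_{\operatorname*{D};j}(s)$ by (\ref{jumpdefa}).

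For part 4, I do \emph{not} assume that $v^{\sigma}$ or $w^{\sigma}$ solves the PDE. Green's first identity, rearranged, reads
\[
\langle\mathsf{L}_{j}^{\sigma}(s)v^{\sigma},\overline{w^{\sigma}}\rangle_{\Omega_{j}^{\sigma}}=\langle\mathbb{A}_{j}^{\sigma}\nabla v^{\sigma},\nabla\overline{w^{\sigma}}\rangle_{\Omega_{j}^{\sigma}}+s^{2}\langle p_{j}^{\sigma}v^{\sigma},\overline{w^{\sigma}}\rangle_{\Omega_{j}^{\sigma}}-\langle\gamma_{\operatorname*{N};j}^{\operatorname*{ext},\sigma}(s)v^{\sigma},\gamma_{\operatorname*{D};j}^{\sigma}(s)\overline{w^{\sigma}}\rangle_{\Gamma_{j}}.
\]
Writing the analogous identity with the roles of $v^{\sigma}$ and $\overline{w^{\sigma}}$ swapped, the two volume expressions coincide, because the unconjugated bilinear pairing $\langle\mathbb{A}_{j}^{\sigma}\nabla v^{\sigma},\nabla\overline{w^{\sigma}}\rangle$ is symmetric under the pointwise symmetry of $\mathbb{A}_{j}^{\sigma}\in\mathbb{R}_{\operatorname*{sym}}^{3\times 3}$, and the zeroth-order term is symmetric since $p_{j}^{\sigma}$ is scalar. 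Subtracting the two versions eliminates the volume contributions and leaves (\ref{2ndGreen}). The whole argument is essentially bookkeeping; the only genuinely nontrivial input is the fact that the abstract Neumann trace from Proposition \ref{Proptrace} obeys the Green's first identity above on $H^{1}(\Omega_{j}^{\sigma},\mathbb{A}_{j}^{\sigma})$, which I accept as given, and therefore no further obstacle arises.
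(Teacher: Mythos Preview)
Your proof is correct and follows essentially the same approach as the paper: both derive part~1 from Green's first identity combined with the homogeneous equation, obtain parts~2 and~3 by summing over $\sigma\in\{+,-\}$ and collecting the boundary terms into jumps, and deduce part~4 by subtracting two instances of Green's first identity so that the symmetric volume terms cancel. Your explicit remark that the $s$-scalings in the trace operators cancel in the boundary pairings is a useful clarification that the paper leaves implicit.
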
%

\begin{proof}
\textbf{@ 1. }For any $v\in H^{1}\left(  \Omega_{j}^{\sigma},\mathbb{A}%
_{j}^{\sigma}\right)  $, it holds%
\begin{align}
\left\langle \mathbb{A}_{j}^{\sigma}\nabla v,\overline{\nabla w}\right\rangle
_{\Omega_{j}^{\sigma}}+\left\langle s^{2}p_{j}^{\sigma}v,\overline
{w}\right\rangle _{\Omega_{j}^{\sigma}}  &  =\left\langle \mathsf{L}%
_{j}^{\sigma}\left(  s\right)  v,\overline{w}\right\rangle _{\Omega
_{j}^{\sigma}}+\left\langle \gamma_{\operatorname*{N};j}^{\operatorname*{ext}%
,\sigma}\left(  s\right)  v,\gamma_{\operatorname*{D};j}^{\sigma}\left(
s\right)  \overline{w}\right\rangle _{\Gamma_{j}}\label{Greens2ndId}\\
&  \overset{\text{(\ref{lochom})}}{=}\left\langle \gamma_{\operatorname*{N}%
;j}^{\operatorname*{ext},\sigma}\left(  s\right)  v,\gamma_{\operatorname*{D}%
;j}^{\sigma}\left(  s\right)  \overline{w}\right\rangle _{\Gamma_{j}}%
\qquad\forall w\in H^{1}\left(  \Omega_{j}^{\sigma}\right)  .\nonumber
\end{align}

\textbf{@ 2. }Let $v\in H^{1}\left(  \mathbb{R}^{3}\right)  $ and assume $v$
satisfies the conditions in part 2. We conclude from part 1 that%
\begin{align*}
\ell_{j}\left(  s\right)  \left(  v,w\right)   &  =\sum_{\sigma\in\left\{
+,-\right\}  }\left\langle \mathbb{A}_{j}^{\sigma}\nabla v,\overline{\nabla
w}\right\rangle _{\Omega_{j}^{\sigma}}+\left\langle s^{2}p_{j}^{\sigma
}v,\overline{w}\right\rangle _{\Omega_{j}^{\sigma}}\\ & =\left\langle
\gamma_{\operatorname*{N};j}^{\operatorname*{ext},+}\left(  s\right)
v^{+}+\gamma_{\operatorname*{N};j}^{\operatorname*{ext},-}\left(  s\right)
v^{-},\gamma_{\operatorname*{D};j}\left(  s\right)  \overline{w}\right\rangle
                                                        _{\Gamma_{j}}\\
  &=\left\langle -\left[  v\right]  _{\operatorname*{N};j}%
^{\operatorname*{ext}}\left(  s\right)  ,\gamma_{\operatorname*{D};j}\left(
s\right)  \overline{w}\right\rangle _{\Gamma_{j}}%
\end{align*}
holds for all $w\in H^{1}\left(  \mathbb{R}^{3}\right)  $.

\textbf{@ 3. }The relation (\ref{ljconjump}) follows in the same fashion as (\ref{Green1}). 

\textbf{@ 4. }Relation (\ref{2ndGreen}) follows by integrating by parts the
first term in (\ref{Greens2ndId}).
\end{proof}

\subsection{Volume and layer potentials}

In this section we define volume and layer potentials as solutions to certain
variational formulations of elliptic partial differential equations without
relying on the explicit knowledge of the Green's function.

\subsubsection{The Newton potential} 

We will define the acoustic Newton potential as the solution of the
variational formulation of a full space partial differential equation
depending on a single subdomain $\Omega_{j}$, corresponding to extended
coefficients $\mathbb{A}_{j}^{\operatorname*{ext}}$, $p_{j}%
^{\operatorname*{ext}}$, and the frequency parameter $s$.

\begin{definition}
\label{DefSolOp}Let Assumption \ref{Acoeff} be satisfied. The solution
operator (acoustic Newton potential) $\mathsf{N}_{j}\left(  s\right)
:H^{-1}\left(  \mathbb{R}^{3}\right)  \rightarrow H^{1}\left(  \mathbb{R}%
^{3}\right)  $ is defined through
\begin{gather}
  \label{eq:4}
  \ell_{j}\left( s\right) \left( \mathsf{N}_{j}\left( s\right) f,w\right) =\left\langle
    f,\overline{w}\right\rangle _{\mathbb{R}^{3}}\qquad\forall f\in H^{-1}\left(
    \mathbb{R}^{3}\right) ,\quad\forall w\in H^{1}\left( \mathbb{R}^{3}\right) .
\end{gather}

\end{definition}

Lemma \ref{LemLaxMilgram} implies that $\ell_{j}\left(  s\right)  $ is
continuous and coercive. Hence, the Lax-Milgram theorem ensures that%
\begin{equation}
\mathsf{N}_{j}\left(  s\right)  :H^{-1}\left(  \mathbb{R}^{3}\right)
\rightarrow H^{1}\left(  \mathbb{R}^{3}\right)  \label{mappropNjs}%
\end{equation}
is well defined, linear, and bounded. An estimate of the operator norm in
frequency dependent norms (see (\ref{fs_norm}), (\ref{defdualnorm})) is given
by the following lemma. Note that the dual space of $\left(  H^{1}\left(
\mathbb{R}^{3}\right)  ,\left\Vert \cdot\right\Vert _{H^{1}\left(
\mathbb{R}^{3}\right)  ;s}\right)  $ is given by $\left(  H^{-1}\left(
\mathbb{R}^{3}\right)  ,\left\Vert \cdot\right\Vert _{H^{-1}\left(
\mathbb{R}^{3}\right)  ;s}\right)  $ with dual norm defined by%
\begin{equation}
\left\Vert f\right\Vert _{H^{-1}\left(  \mathbb{R}^{3}\right)  ;s}:=\sup_{g\in
H^{1}\left(  \mathbb{R}^{3}\right)  \backslash\left\{  0\right\}  }%
\frac{\left\vert \left\langle f,\overline{g}\right\rangle _{\mathbb{R}^{3}%
}\right\vert }{\left\Vert g\right\Vert _{H^{1}\left(  \mathbb{R}^{3}\right)
;s}}. \label{defdualnorm}%
\end{equation}

\begin{lemma}
Let Assumption \ref{Acoeff} be satisfied. The Newton potential is an
inverse of $\mathsf{L}_{j}\left(  s\right)  $, i.e.,%
\begin{gather*}
v=\mathsf{N}_{j}\left(  s\right)  \circ\mathsf{L}_{j}\left(  s\right)
v\quad\forall v\in H^{1}\left(  \mathbb{R}^{3}\right)  \quad%
\text{and\quad}f=\mathsf{L}_{j}\left(  s\right)  \circ\mathsf{N}_{j}\left(
s\right)  f\quad\forall f\in H^{-1}\left(  \mathbb{R}^{3}\right);  \label{IDNL}%
\end{gather*}%
\modified{it} satisfies the \rhc{$s$-explicit} estimate%
\begin{equation}
\left\Vert \mathsf{N}_{j}\left(  s\right)  f\right\Vert _{H^{1}\left(
\mathbb{R}^{3}\right)  ;s}\leq\frac{\left\vert s\right\vert }{\lambda
_{j}\operatorname{Re}s}\left\Vert f\right\Vert _{H^{-1}\left(  \mathbb{R}%
^{3}\right)  ;s}\quad\forall f\in H^{-1}\left(  \mathbb{R}^{3}\right)  ,
\label{Njest}%
\end{equation}
with $\lambda_{j}$ as in (\ref{deflambdaj}).
\end{lemma}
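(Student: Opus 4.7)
The plan is to read off both assertions directly from the variational definitions \eqref{FF7ab} and \eqref{eq:4} of $\mathsf{L}_{j}(s)$ and $\mathsf{N}_{j}(s)$ together with the coercivity and continuity of $\ell_{j}(s)$ established in Lemma~\ref{LemLaxMilgram}. Since both operators are built from the same sesquilinear form and the form is coercive on the whole of $H^{1}(\mathbb{R}^{3})$, the proof is essentially an exercise in unwinding definitions via Lax--Milgram uniqueness; I do not expect a genuinely hard step.

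For the inverse identity $\mathsf{N}_{j}(s)\circ\mathsf{L}_{j}(s)=\mathrm{Id}$, I would argue that given $v\in H^{1}(\mathbb{R}^{3})$, the element $f:=\mathsf{L}_{j}(s)v\in H^{-1}(\mathbb{R}^{3})$ satisfies $\langle f,\overline{w}\rangle_{\mathbb{R}^{3}}=\ell_{j}(s)(v,w)$ for every $w$ by \eqref{FF7ab}, while \eqref{eq:4} states that $\mathsf{N}_{j}(s)f$ solves exactly the same variational problem. Coercivity of $\ell_{j}(s)$ then forces the two solutions to coincide. The opposite composition $\mathsf{L}_{j}(s)\circ\mathsf{N}_{j}(s)=\mathrm{Id}$ follows by the same dictionary applied in the reverse order: starting from $v:=\mathsf{N}_{j}(s)f$, equation \eqref{FF7ab} converts $\ell_{j}(s)(v,w)$ into $\langle\mathsf{L}_{j}(s)v,\overline{w}\rangle_{\mathbb{R}^{3}}$, which agrees with $\langle f,\overline{w}\rangle_{\mathbb{R}^{3}}$ for every test function, hence equals $f$ as an element of $H^{-1}(\mathbb{R}^{3})$.

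For the $s$-explicit bound \eqref{Njest}, my plan is to set $u:=\mathsf{N}_{j}(s)f$, choose the test function $w:=\mu u$ with $\mu:=s/|s|$, and substitute into \eqref{eq:4}. Taking real parts, the left-hand side is bounded from below by the second (coercivity) inequality of Lemma~\ref{LemLaxMilgram}, which precisely supplies the factor $\lambda_{j}\operatorname{Re}s/|s|$. The right-hand side $\operatorname{Re}\langle f,\overline{\mu u}\rangle_{\mathbb{R}^{3}}$ is controlled from above by $\|f\|_{H^{-1}(\mathbb{R}^{3});s}\,\|u\|_{H^{1}(\mathbb{R}^{3});s}$ through the dual-norm definition \eqref{defdualnorm} and the identity $|\mu|=1$. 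Dividing through by $\|u\|_{H^{1}(\mathbb{R}^{3});s}$ (the case $u=0$ being trivial) immediately yields \eqref{Njest}.

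The only bookkeeping care that is really needed is the complex rotation $\mu$: coercivity of the sesquilinear form $\ell_{j}(s)$ is phrased through the asymmetric pair $(v,\mu v)$ rather than $(v,v)$, so the correct test function above is $\mu u$ and not $u$ itself. Once this is respected the estimate drops out in a single line, and working throughout with the frequency-scaled norms on $H^{1}(\mathbb{R}^{3})$ and its dual ensures that no spurious powers of $|s|$ are introduced.
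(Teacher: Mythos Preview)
Your proposal is correct and follows essentially the same approach as the paper. For the inverse identities you and the paper both invoke the variational definitions \eqref{FF7ab}, \eqref{eq:4} and uniqueness via coercivity; for \eqref{Njest} the paper simply appeals to the Babu\v{s}ka--Lax--Milgram theorem together with the coercivity constant from Lemma~\ref{LemLaxMilgram}, while you spell out the same argument explicitly by testing with $w=\mu u$, which is exactly how that Lax--Milgram bound is derived.
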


\begin{proof}
For $v\in H^{1}\left(  \mathbb{R}^{3}\right)  $, we have $\mathsf{L}%
_{j}\left(  s\right)  v\in H^{-1}\left(  \mathbb{R}^{3}\right)  $ and hence
the Newton potential can be applied:%
\[
\ell_{j}\left(  s\right)  \left(  \mathsf{N}_{j}\left(  s\right)
\circ\mathsf{L}_{j}\left(  s\right)  v,w\right)  =\left\langle \mathsf{L}%
_{j}\left(  s\right)  v,\overline{w}\right\rangle _{\mathbb{R}^{3}}=\ell
_{j}\left(  s\right)  \left(  v,w\right)  \qquad\forall w\in H^{1}\left(
\mathbb{R}^{3}\right)  .
\]
Since $\ell_{j}\left(  s\right)  \left(  \cdot,\cdot\right)  $ is coercive
the first identity in (\ref{IDNL}) follows.
The second one is a direct consequence of the definition of $\mathsf{N}%
_{j}\left(  s\right)  $.%

{To prove (\ref{Njest}), we use the coercivity of $\ell_{j}\left(  s\right)
\left(  \cdot,\cdot\right)  $ with respect to the Hilbert space $\left(
H^{1}\left(  \mathbb{R}^{3}\right)  ,\left\Vert \cdot\right\Vert
_{H^{1}\left(  \mathbb{R}^{3}\right)  ;s}\right)$ as stated in Lemma~\ref{LemLaxMilgram}.} From
the Babu\v{s}ka-Lax-Milgram theorem \cite[Thm. 2.1]{BabuskaLaxMilgram} and the
definition (\ref{defdualnorm}) of the dual norm the assertion follows.
\end{proof}

\subsubsection{The single layer potential\label{slp}}

The single layer potential is defined by using the same sesquilinear form as
for the Newton potential for a certain type of right-hand sides.

\begin{definition}
\label{DefSLP}Let Assumption \ref{Acoeff} be satisfied. For $1\leq j\leq
n_{\Omega}$ and $\varphi\in H^{-1/2}\left(  \Gamma_{j}\right)  $ the
\emph{\bf single layer potential} $\mathsf{S}_{j}\left(  s\right)  \varphi\in
H^{1}\left(  \mathbb{R}^{3}\right)  $ is given as the unique solution of:%
\begin{equation}
\boxed{\ell_{j}\left(  s\right)  \left(  \mathsf{S}_{j}\left(  s\right)
\varphi,w\right)  =\left\langle \varphi,\gamma_{\operatorname*{D};j}\left(
s\right)  \overline{w}\right\rangle _{\Gamma_{j}}\quad\forall w\in
H^{1}\left(  \mathbb{R}^{3}\right)}  . \label{DefSLPForm}%
\end{equation}
\end{definition}

This defines a continuous operator
$H^{-1/2}\left( \Gamma_{j}\right)\to H^{1}(\mathbb{R}^{3})$. The single layer can be
represented as the composition of the Newton potential and the dual Dirichlet trace as can
be seen from the next lemma, where also important properties of
$\mathsf{S}_{j}\left( s\right) $ are collected.

\begin{lemma}
\label{LemMapPropSLP}Let Assumption \ref{Acoeff} be satisfied. Then%
\begin{equation}
\mathsf{S}_{j}\left(  s\right)  =\mathsf{N}_{j}\left(  s\right)  \circ\left(
\gamma_{\operatorname*{D};j}\left(  s\right)  \right)  ^{\prime}.
\label{elljsS}%
\end{equation}
For any $\varphi\in H^{-1/2}\left(  \Gamma\right)  $, the single layer
potential $u:=\mathsf{S}_{j}\left(  s\right)  \varphi$ satisfies
\begin{equation*}
  u\in
  H^{1}\left(  \mathbb{R}^{3}\backslash\Gamma_{j},\mathbb{A}_{j}%
    ^{\operatorname*{ext}}\right).
\end{equation*}
For the restrictions $u^{\sigma
}:=\left.  u\right\vert _{\Omega_{j}^{\sigma}}$, $\sigma\in\left\{
+,-\right\}  $, hold
\begin{equation}
\mathsf{L}_{j}^{\sigma}\left(  s\right)  u^{\sigma}=0\quad\text{in\ }%
\Omega_{j}^{\sigma} \label{SlpProp1}%
\end{equation}
and the jump relations%
\begin{equation}%
\boxed{\left[  \left(  \mathsf{S}_{j}\left(  s\right)  \varphi\right)  \right]
_{\operatorname*{D};j}\left(  s\right)  =0\quad, \quad \left[  \left(  \mathsf{S}%
_{j}\left(  s\right)  \varphi\right)  \right]  _{\operatorname*{N}%
;j}^{\operatorname*{ext}}\left(  s\right)  =-\varphi}.
\label{jumprelSLP}%
\end{equation}

\end{lemma}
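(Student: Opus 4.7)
The plan is to read off each assertion by comparing the defining variational equation (\ref{DefSLPForm}) with the variational characterizations available for $\mathsf{N}_{j}(s)$ and for locally solvable PDEs, then to harvest the jump relations from part~2 of Green's identities (\ref{ljjumprel}).

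First I would establish the factorization (\ref{elljsS}). Since $\gamma_{\operatorname*{D};j}(s):H^{1}(\mathbb{R}^{3})\to H^{1/2}(\Gamma_{j})$ is continuous by (\ref{traceest}), its (antilinear) dual maps $H^{-1/2}(\Gamma_{j})$ into $H^{-1}(\mathbb{R}^{3})$, so that $(\gamma_{\operatorname*{D};j}(s))'\varphi\in H^{-1}(\mathbb{R}^{3})$ and the Newton potential may be applied. Rewriting the right-hand side of (\ref{DefSLPForm}) as $\langle(\gamma_{\operatorname*{D};j}(s))'\varphi,\overline{w}\rangle_{\mathbb{R}^{3}}$ and comparing with the defining equation (\ref{eq:4}) of $\mathsf{N}_{j}(s)$, the coercivity of $\ell_{j}(s)$ (Lemma~\ref{LemLaxMilgram}) yields $\mathsf{S}_{j}(s)\varphi=\mathsf{N}_{j}(s)(\gamma_{\operatorname*{D};j}(s))'\varphi$. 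This also shows well-posedness of (\ref{DefSLPForm}) and that $u\in H^{1}(\mathbb{R}^{3})$.

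Next I would verify the homogeneous PDE (\ref{SlpProp1}) in each subdomain. For $w\in C_{0}^{\infty}(\Omega_{j}^{\sigma})$ the right-hand side of (\ref{DefSLPForm}) vanishes because $\gamma_{\operatorname*{D};j}(s)\overline{w}=0$, so $\ell_{j}(s)(u,w)=0$ reduces to $\langle\mathbb{A}_{j}^{\sigma}\nabla u^{\sigma},\overline{\nabla w}\rangle_{\Omega_{j}^{\sigma}}+s^{2}\langle p_{j}^{\sigma}u^{\sigma},\overline{w}\rangle_{\Omega_{j}^{\sigma}}=0$, i.e.\ $\mathsf{L}_{j}^{\sigma}(s)u^{\sigma}=0$ in the distributional sense. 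In particular $\operatorname{div}(\mathbb{A}_{j}^{\sigma}\nabla u^{\sigma})=s^{2}p_{j}^{\sigma}u^{\sigma}\in L^{2}(\Omega_{j}^{\sigma})$, which together with $u^{\sigma}\in H^{1}(\Omega_{j}^{\sigma})$ gives $u\in H^{1}(\mathbb{R}^{3}\setminus\Gamma_{j},\mathbb{A}_{j}^{\operatorname*{ext}})$.

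Finally, for the jump relations (\ref{jumprelSLP}), the Dirichlet jump is immediate from $u\in H^{1}(\mathbb{R}^{3})$ via (\ref{nojumptrace}), yielding $[u]_{\operatorname*{D};j}=0$ and hence $[u]_{\operatorname*{D};j}(s)=0$. For the Neumann jump, the PDE on each side together with $u\in H^{1}(\mathbb{R}^{3})$ means that the hypotheses of part~2 of Green's identities are fulfilled, so that (\ref{ljjumprel}) gives
\[
\ell_{j}(s)(u,w)=\langle -[u]_{\operatorname*{N};j}^{\operatorname*{ext}}(s),\gamma_{\operatorname*{D};j}(s)\overline{w}\rangle_{\Gamma_{j}}\quad\forall w\in H^{1}(\mathbb{R}^{3}).
\]
Subtracting (\ref{DefSLPForm}) yields $\langle\varphi+[u]_{\operatorname*{N};j}^{\operatorname*{ext}}(s),\gamma_{\operatorname*{D};j}(s)\overline{w}\rangle_{\Gamma_{j}}=0$ for every $w\in H^{1}(\mathbb{R}^{3})$; by the surjectivity of the Dirichlet trace from Proposition~\ref{Proptrace}, the test function $\gamma_{\operatorname*{D};j}(s)\overline{w}$ ranges over all of $H^{1/2}(\Gamma_{j})$, so $[u]_{\operatorname*{N};j}^{\operatorname*{ext}}(s)=-\varphi$. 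The most delicate point is this last step: one must be careful that the extension lemma produces a test function lying in the full-space $H^{1}(\mathbb{R}^{3})$ (rather than only one-sided), which is precisely what Proposition~\ref{Proptrace} provides.
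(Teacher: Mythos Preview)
Your proof is correct and follows essentially the same approach as the paper: factorize via the dual trace and the Newton potential, test against functions supported away from $\Gamma_{j}$ to get the homogeneous PDE (and hence the regularity), read off the Dirichlet jump from $u\in H^{1}(\mathbb{R}^{3})$, and obtain the Neumann jump by comparing (\ref{DefSLPForm}) with the integrated-by-parts identity and invoking surjectivity of the Dirichlet trace. The only cosmetic difference is that you cite (\ref{ljjumprel}) explicitly where the paper just says ``integration by parts over $\Omega_{j}^{-}$ and $\Omega_{j}^{+}$''---these are the same computation.
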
%

\begin{proof}
The representation (\ref{elljsS}) follows by writing (\ref{DefSLPForm}) as%
\[
\ell_{j}\left(  s\right)  \left(  \mathsf{S}_{j}\left(  s\right)
\varphi,w\right)  =\left\langle \left(  \gamma_{\operatorname*{D};j}\left(
s\right)  \right)  ^{\prime}\varphi,\overline{w}\right\rangle _{%
\mathbb{R}^{3}}\quad%
\forall w\in H^{1}\left(  \mathbb{R}^{3}\right)  ,
\]
so that $\mathsf{S}_{j}\left(  s\right)  \varphi=\mathsf{N}_{j}\left(
s\right)  \left(  \gamma_{\operatorname*{D};j}\left(  s\right)  \right)
^{\prime}\varphi$. Indeed, the mapping properties of the dual Dirichlet trace
$\left(  \gamma_{\operatorname*{D};j}\left(  s\right)  \right)  ^{\prime
}:H^{1/2}\left(  \Gamma_{j}\right)  \rightarrow H^{-1}\left(  \mathbb{R}%
^{3}\right)  $ imply that the Newton potential can be applied in (\ref{elljsS}).

For $\varphi\in H^{-1/2}\left(  \Gamma\right)  $, let $u:=\mathsf{S}%
_{j}\left(  s\right)  \varphi$ and $u^{\sigma}:=\left.  u\right\vert
_{\Omega_{j}^{\sigma}}$. By choosing in (\ref{DefSLPForm}) test functions
$v\in H^{1}\left(  \mathbb{R}^{3}\right)  $ with zero trace on $\Gamma_{j}$ we
obtain%
\[
\mathsf{L}_{j}^{\sigma}\left(  s\right)  u^{\sigma}=0\quad\text{in }\Omega
_{j}^{\sigma}\text{, }\sigma\in\left\{  +,-\right\}  .
\]
In particular, this implies $u\in H^{1}\left(  \mathbb{R}^{3}\backslash
\Gamma_{j},\mathbb{A}_{j}^{\operatorname*{ext}}\right)  $. An integration by
parts in (\ref{DefSLPForm}) over $\Omega_{j}^{-}$ and $\Omega_{j}^{+}$ leads
to%
\[
-\left\langle \left[  u\right]  _{\operatorname*{N};j}^{\operatorname*{ext}%
}\left(  s\right)  ,\gamma_{\operatorname*{D};j}\left(  s\right)  \overline
{w}\right\rangle _{\Gamma_{j}}=\left\langle \varphi,\gamma_{\operatorname*{D}%
;j}\left(  s\right)  \overline{w}\right\rangle _{\Gamma_{j}}\quad\forall w\in
H^{1}\left(  \mathbb{R}^{3}\right)  .
\]
Since $\gamma_{\operatorname*{D};j}\left(  s\right)  :H^{1}\left(
\mathbb{R}^{3}\right)  \rightarrow H^{1/2}\left(  \Gamma_{j}\right)  $ is
surjective (see, e.g., \cite[Thm. 3.37]{Mclean00}, \cite[Lem. 2.6]%
{Mikhailov_traces_1}) it follows that $\left[  u\right]  _{\operatorname*{N}%
;j}^{\operatorname*{ext}}\left(  s\right)  =-\varphi$. Finally, the relation
$\left[  u\right]  _{\operatorname*{D};j}\left(  s\right)  =0$ follows from
$u\in H^{1}\left(  \mathbb{R}^{3}\right)  $ (see, e.g. \cite[(6.20)]%
{Mclean00}, \cite[Lem. 2.5]{Mikhailov_traces_1}).
\end{proof}

\subsubsection{The double layer potential\label{dlp}}

Next, we introduce the double layer potential and start by reviewing some
standard definitions as already sketched in the introduction. For problems
with constant coefficients as, e.g., in \cite[Def. 3.1.5]{SauterSchwab2010},
the double layer potential is defined by%
\begin{equation}
\mathsf{D}_{j}\left(  s\right)  :=\mathsf{N}_{j}\left(  s\right)  \circ\left(
\gamma_{\operatorname*{N};j}^{\operatorname*{ext}}\right)  ^{\prime}\left(
s\right)  . \label{defDstandard}%
\end{equation}
The continuity of the co-normal derivative $\gamma_{\operatorname*{N}%
;j}^{\operatorname*{ext}}:H^{1}\left(  \mathbb{R}^{3},\mathbb{A}%
_{j}^{\operatorname*{ext}}\right)  \rightarrow H^{-1/2}\left(  \Gamma
_{j}\right)  $ (see (\ref{continuitynormaltrace})) leads to the continuity of
its dual $\left(  \gamma_{\operatorname*{N};j}^{\operatorname*{ext}}\right)
^{\prime}:H^{1/2}\left(  \Gamma_{j}\right)  \rightarrow\left(  H^{1}\left(
\mathbb{R}^{3},\mathbb{A}_{j}^{\operatorname*{ext}}\right)  \right)  ^{\prime
}$. The problem with (\ref{defDstandard}) is that the image space $\left(
H^{1}\left(  \mathbb{R}^{3},\mathbb{A}_{j}^{\operatorname*{ext}}\right)
\right)  ^{\prime}$ in general is larger than $H^{-1}\left(  \mathbb{R}%
^{3}\right)  $ and hence exceeds the domain of $\mathsf{N}_{j}\left(
s\right)  $ in (\ref{defDstandard}). The extension of the domain of
$\mathsf{N}_{j}\left(  s\right)  $ for problems with varying coefficients is
far from trivial. Another common definition uses explicit knowledge of the
fundamental solution $G\left(  \mathbf{x},\mathbf{y}\right)  $ and first
defines%
\[
\left(  \mathsf{D}_{j}\left(  s\right)  \psi\right)  \left(  \mathbf{x}%
\right)  :=\int_{\Gamma_{j}}\left(  \frac{\partial}{\partial\mathbf{\tilde{n}%
}_{\mathbf{y}}}G\left(  \mathbf{x},\mathbf{y}\right)  \right)  \psi\left(
\mathbf{y}\right)  d\Gamma_{\mathbf{y}}\quad\mathbf{x}\in\mathbb{R}%
^{3}\backslash\Gamma_{j}%
\]
($\partial/\partial\mathbf{\tilde{n}}_{\mathbf{y}}$ with $\mathbf{\tilde{n}%
}_{\mathbf{y}}:=\mathbb{A}_{j}^{\operatorname*{ext}}\mathbf{n}_{j}$ denotes
the co-normal derivative with respect to $\mathbf{y}$) for coefficients
$\mathbb{A}_{j}^{\operatorname*{ext}}$ and boundary densities $\psi:\Gamma
_{j}\rightarrow\mathbb{C}$, which are sufficiently regular, and then
continuously extends this definition to appropriate Sobolev spaces. However,
the derivation of mapping properties of $\mathsf{D}_{j}\left(  s\right)  $ via
this approach relies on properties of the unknown fundamental solution and is
far from trivial for problems with $L^{\infty}$ coefficients.

Instead, we present here a new definition of the double layer potential as a
solution of some ultra-weak variational problem which allows us to derive
properties of these potentials directly from the well-established theory of
linear elliptic partial differential operators of second order.

For the definition of the double layer potential we introduce two
auxiliary variational problems.

\noindent\fbox{\begin{minipage}{0.98\linewidth}
  I. \textit{Ultra-weak variational problem (UWVP)}: Given
  $\psi\in H^{1/2}\left( \Gamma_{j}\right) $, find
  $u\in L^{2}\left( \mathbb{R}^{3}\right) $ such that%
  \begin{gather}
    \left\langle u,\mathsf{L}_{j}\left(  s\right)  \overline{v}\right\rangle
    _{\mathbb{R}^{3}}=\left\langle \psi,\gamma_{\operatorname*{N};j}%
      ^{\operatorname*{ext}}\left(  s\right)  \overline{v}\right\rangle _{\Gamma
      j}\qquad\forall v\in H^{1}\left(  \mathbb{R}^{3},\mathbb{A}_{j}%
      ^{\operatorname*{ext}}\right)  . \label{uwvp}%
  \end{gather}
\end{minipage}}

\noindent\fbox{\begin{minipage}{0.98\linewidth}
    II. \textit{Mixed variational problem (MVP)}. For given $\psi\in H^{1/2}\left(
\Gamma_{j}\right)  $, find $\mathbf{j}\in\mathbf{H}\left(  \mathbb{R}%
^{3},\operatorname*{div}\right)  $ and $u\in L^{2}\left(  \mathbb{R}%
^{3}\right)  $ such that%
\begin{equation}%
\begin{array}
[c]{llll}%
-\left\langle \left(  \mathbb{A}_{j}^{\operatorname*{ext}}\right)
^{-1}\mathbf{j},\overline{\mathbf{m}}\right\rangle _{\mathbb{R}^{3}} &
-\left\langle u,\operatorname*{div}\overline{\mathbf{m}}\right\rangle
_{\mathbb{R}^{3}} & =\left\langle \psi,\gamma_{\mathbf{n};j}\left(  s\right)
\overline{\mathbf{m}}\right\rangle _{\Gamma_{j}} & \forall\mathbf{m}%
\in\mathbf{H}\left(  \mathbb{R}^{3},\operatorname*{div}\right)  ,\\
&  &  & \\
-\left\langle \operatorname{div}\mathbf{j},\overline{q}\right\rangle
_{\mathbb{R}^{3}} & +s^{2}\left\langle p_{j}^{\operatorname*{ext}}%
u,\overline{q}\right\rangle _{\mathbb{R}^{3}} & =0 & \forall q\in L^{2}\left(
\mathbb{R}^{3}\right)  .
\end{array}
\label{mvp}%
\end{equation}
\end{minipage}}
\medskip

In Lemmas \ref{LemMVPwp} and \ref{LemUWVP} we will prove that the variational problems
(\ref{LemUWVP}) and (\ref{mvp}) are well posed.

\begin{lemma}
\label{LemMVPwp}Let Assumption \ref{Acoeff} be satisfied. The ultra-weak
variation problem (\ref{uwvp}) is well posed.
\end{lemma}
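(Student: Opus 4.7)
The plan is to prove well-posedness of the ultra-weak variational problem \eqref{uwvp} by a transposition argument built on the fact that $\mathsf{L}_{j}(s)$ is an isomorphism between suitable spaces. The first step is to upgrade the isomorphism already available from the Newton potential: the previous lemma identifies $\mathsf{N}_{j}(s)$ as the two-sided inverse of $\mathsf{L}_{j}(s): H^{1}(\mathbb{R}^{3}) \to H^{-1}(\mathbb{R}^{3})$. I would first observe that its restriction yields a linear topological isomorphism
\[
\mathsf{L}_{j}(s) : H^{1}(\mathbb{R}^{3},\mathbb{A}_{j}^{\operatorname{ext}}) \longrightarrow L^{2}(\mathbb{R}^{3}).
\]
Indeed, boundedness follows directly from the definition of the graph norm on $H^{1}(\mathbb{R}^{3},\mathbb{A}_{j}^{\operatorname{ext}})$. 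For surjectivity, given $g \in L^{2}(\mathbb{R}^{3}) \subset H^{-1}(\mathbb{R}^{3})$, set $v := \mathsf{N}_{j}(s) g \in H^{1}(\mathbb{R}^{3})$; then the identity $-\operatorname{div}(\mathbb{A}_{j}^{\operatorname{ext}}\nabla v) = g - s^{2} p_{j}^{\operatorname{ext}} v \in L^{2}$ places $v$ in $H^{1}(\mathbb{R}^{3},\mathbb{A}_{j}^{\operatorname{ext}})$. Injectivity is immediate from $\mathsf{N}_{j}(s) \circ \mathsf{L}_{j}(s) = \operatorname{id}$.

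Next I would rephrase \eqref{uwvp} via the antilinear bijection $v \mapsto g := \mathsf{L}_{j}(s)\overline{v}$ from $H^{1}(\mathbb{R}^{3},\mathbb{A}_{j}^{\operatorname{ext}})$ onto $L^{2}(\mathbb{R}^{3})$. Under this substitution, \eqref{uwvp} becomes: find $u\in L^{2}(\mathbb{R}^{3})$ such that
\[
\bilinear{u}{g}{\mathbb{R}^{3}} = \bilinear{\psi}{\gamma_{\operatorname{N};j}^{\operatorname{ext}}(s)\, \mathsf{N}_{j}(s)\,g}{\Gamma_{j}} \qquad \forall g \in L^{2}(\mathbb{R}^{3}).
\]
Both sides are now linear in $g$. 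The right-hand side defines a continuous linear functional on $L^{2}(\mathbb{R}^{3})$, since $\mathsf{N}_{j}(s):L^{2}(\mathbb{R}^{3})\to H^{1}(\mathbb{R}^{3},\mathbb{A}_{j}^{\operatorname{ext}})$ is bounded by the isomorphism above (quantitatively by \eqref{Njest} together with the closed-graph upgrade to the graph norm), $\gamma_{\operatorname{N};j}^{\operatorname{ext}}(s)$ is bounded into $H^{-1/2}(\Gamma_{j})$, and $\psi \in H^{1/2}(\Gamma_{j})$ pairs with that dual space.

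By the Riesz representation theorem applied to the bilinear $L^{2}$-pairing $(u,g)\mapsto\bilinear{u}{g}{\mathbb{R}^{3}}$, there exists a unique $u \in L^{2}(\mathbb{R}^{3})$ realising this functional, together with the a~priori bound
\[
\norm{u}{L^{2}(\mathbb{R}^{3})} \le C(s)\, \norm{\psi}{H^{1/2}(\Gamma_{j})},
\]
where $C(s)$ is the product of the operator norms of $\mathsf{N}_{j}(s)$ and $\gamma_{\operatorname{N};j}^{\operatorname{ext}}(s)$. Unwinding the substitution shows this $u$ solves \eqref{uwvp}, and conversely any solution of \eqref{uwvp} is recovered by the same identification, so uniqueness transfers as well.

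The main technical subtlety, and the step I expect to require the most care, is the first one: certifying that $\mathsf{L}_{j}(s)$ is an isomorphism with the \emph{graph-norm} target on the domain side and $L^{2}$ on the range side (as opposed to the $H^{1}\to H^{-1}$ statement already proved). This hinges on the regularity shift encoded in the definition of $H^{1}(\mathbb{R}^{3},\mathbb{A}_{j}^{\operatorname{ext}})$, and must be combined with the open mapping theorem (or equivalently the closed graph theorem) to produce an $s$-explicit bound for the inverse in the graph norm; once this is in place, the transposition argument is purely formal.
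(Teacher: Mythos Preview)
Your argument is correct. Both your approach and the paper's hinge on the same key fact: that $\mathsf{L}_{j}(s)$ restricts to a topological isomorphism $H^{1}(\mathbb{R}^{3},\mathbb{A}_{j}^{\operatorname{ext}}) \to L^{2}(\mathbb{R}^{3})$ with inverse $\mathsf{N}_{j}(s)|_{L^{2}}$. You package this as a transposition argument---substitute $g = \mathsf{L}_{j}(s)\overline{v}$ and invoke Riesz on $L^{2}$---while the paper verifies the Banach--Ne\v{c}as--Babu\v{s}ka conditions directly: continuity of both sides, the inf-sup condition via the test function $v = \mathsf{N}_{j}(\bar{s})u$ (which gives $\langle u,\mathsf{L}_{j}(s)\overline{v}\rangle_{\mathbb{R}^{3}} = \|u\|_{L^{2}}^{2}$), and adjoint injectivity via $u = \overline{\mathsf{L}_{j}(s)}v$. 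Your route is more streamlined; the paper's buys an explicit $s$-dependent inf-sup constant of order $\operatorname{Re}s/|s|$, in line with its emphasis on $s$-explicit estimates throughout. Your closing remark is on point: to match that, you would have to replace the bare open-mapping invocation with a quantitative bound on $\mathsf{N}_{j}(s)$ in the graph norm, which is exactly the computation the paper carries out when estimating $\|\mathsf{N}_{j}(\bar{s})u\|_{H^{1}(\mathbb{R}^{3},\mathbb{A}_{j}^{\operatorname{ext}})}$.
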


\begin{proof}
We will show that there exist constants $0 < C_{1},C_{2},c_{1}<\infty$ such
that the continuity estimates%
\begin{subequations}
\label{BB}
\begin{align}
\forall u\in L^{2}\left(  \mathbb{R}^{3}\right),\; v\in
H^{1}\left(  \mathbb{R}^{3},\mathbb{A}_{j}^{\operatorname*{ext}}\right)
\quad\left\vert \left\langle u,\mathsf{L}_{j}\left(  s\right)  \overline
{v}\right\rangle _{\mathbb{R}^{3}}\right\vert & \leq C_{1}\left\Vert
u\right\Vert _{L^{2}\left(  \mathbb{R}^{3}\right)  }\left\Vert v\right\Vert
_{H^{1}\left(  \mathbb{R}^{3},\mathbb{A}_{j}^{\operatorname*{ext}}\right)
}, \label{BBa}\\
  \begin{aligned}
    \forall &\psi\in H^{1/2}(\Gamma_{j}),\\
    &v\in H^{1}\left(  \mathbb{R}^{3},\mathbb{A}_{j}^{\operatorname*{ext}%
      }\right)
  \end{aligned}
    \quad   \left\vert \left\langle \psi,\gamma_{\operatorname*{N}%
          ;j}^{\operatorname*{ext}}\left(  s\right)  \overline{v}\right\rangle
      _{\Gamma_{j}}\right\vert & \leq C_{2}\left\Vert \psi\right\Vert_{H^{1/2}(\Gamma)}
    \left\Vert v\right\Vert _{H^{1}\left(
        \mathbb{R}^{3},\mathbb{A}_{j}^{\operatorname*{ext}}\right)  }.
  \label{BBd}%
\end{align}
and the following inf-sup conditions hold:%
\begin{align}
\forall u\in L^{2}\left(  \mathbb{R}^{3}\right)  \quad\exists v\in
H^{1}\left(  \mathbb{R}^{3},\mathbb{A}_{j}^{\operatorname*{ext}}\right)
\quad &  \left\vert \left\langle u,\mathsf{L}_{j}\left(  s\right)
\overline{v}\right\rangle _{\mathbb{R}^{3}}\right\vert \geq c_{1}\left\Vert
u\right\Vert _{L^{2}\left(  \mathbb{R}^{3}\right)  }\left\Vert v\right\Vert
_{H^{1}\left(  \mathbb{R}^{3},\mathbb{A}_{j}^{\operatorname*{ext}}\right)
},
\label{BBb}\\
\forall v\in H^{1}\left(  \mathbb{R}^{3},\mathbb{A}_{j}^{\operatorname*{ext}%
}\right)  \quad &  \left(  \sup_{u\in L^{2}\left(  \mathbb{R}^{3}\right)
}\left\vert \left\langle u,\mathsf{L}_{j}\left(  s\right)  \overline
{v}\right\rangle _{\mathbb{R}^{3}}\right\vert =0\right)  \implies\left(
v=0\right)  .\label{BBc}
\end{align}
\end{subequations}%

The Babu\v{s}ka-Lax-Milgram theorem (also sometimes called
Banach-Nečas-Ba\-buš\-ka theorem) (see, e.g., \cite[Thm. 2.1]%
{BabuskaLaxMilgram} and, e.g., \cite[Thm. 25.9]{ErnGuermondII} for the form we
will apply it) then implies well-posedness of (\ref{uwvp}).

\textbf{@(\ref{BBa}).} The continuity of the sesquilinear form in (\ref{uwvp})
follows from%
\begin{align*}
\left\vert \left\langle u,\mathsf{L}_{j}\left(  s\right)  \overline
{v}\right\rangle _{\mathbb{R}^{3}}\right\vert  &  \leq\left\Vert u\right\Vert
_{L^{2}\left(  \mathbb{R}^{3}\right)  }\left\Vert \mathsf{L}_{j}\left(
                                                 s\right)  \overline{v}\right\Vert _{L^{2}\left(  \mathbb{R}^{3}\right)  }
  \\
  & \leq\left\Vert u\right\Vert _{L^{2}\left(  \mathbb{R}^{3}\right)  }\left\Vert
-\operatorname*{div}\left(  \mathbb{A}_{j}^{\operatorname*{ext}}\nabla\bar
{v}\right)  +s^{2}p_{j}^{\operatorname*{ext}}\bar{v}\right\Vert _{L^{2}\left(
\mathbb{R}^{3}\right)  }\\
&  \leq\sqrt{2}\left\Vert u\right\Vert _{L^{2}\left(  \mathbb{R}^{3}\right)
}\left(  \left\Vert \operatorname*{div}\left(  \mathbb{A}_{j}%
^{\operatorname*{ext}}\nabla\bar{v}\right)  \right\Vert _{L^{2}\left(
\mathbb{R}^{3}\right)  }^{2}+\left\vert s\right\vert ^{4}\Lambda_{j}%
^{2}\left\Vert v\right\Vert _{L^{2}\left(  \mathbb{R}^{3}\right)  }%
^{2}\right)  ^{1/2}\\
&  \leq C_{1}\left\Vert u\right\Vert _{L^{2}\left(  \mathbb{R}^{3}\right)
}\left\Vert v\right\Vert _{H^{1}\left(  \mathbb{R}^{3},\mathbb{A}%
_{j}^{\operatorname*{ext}}\right)  }%
\end{align*}
for $C_{1}=\sqrt{2}\max\left\{  1,\left\vert s\right\vert ^{2}\Lambda
_{j}\right\}  $.

\textbf{@(\ref{BBd}). }It is a simple consequence of the mapping properties of
the trace operator that the right-hand side in (\ref{uwvp}) $\left\langle
\psi,\gamma_{\operatorname*{N};j}^{\operatorname*{ext}}\left(  s\right)
\overline{\cdot}\right\rangle _{\Gamma_{j}}$ defines a continuous functional
on $H^{1}\left(  \mathbb{R}^{3},\mathbb{A}_{j}^{\operatorname*{ext}}\right)  $
so that (\ref{BBd}) follows.%

\textbf{@(\ref{BBb}).} We choose the test function in (\ref{uwvp}) as
$v\leftarrow\mathsf{N}_{j}\left(  \overline{s}\right)  u$. It is easy to
deduce from Definition \ref{DefSolOp} that $\overline{\mathsf{N}_{j}\left(
\overline{s}\right)  u}=\mathsf{N}_{j}\left(  s\right)  \overline{u}$ holds so
that%
\[
\left\langle u,\mathsf{L}_{j}\left(  s\right)  \overline{\mathsf{N}_{j}\left(
\overline{s}\right)  u}\right\rangle _{\mathbb{R}^{3}}=\left\langle
u,\mathsf{L}_{j}\left(  s\right)  \mathsf{N}_{j}\left(  s\right)  \overline
{u}\right\rangle _{\mathbb{R}^{3}}=\left\Vert u\right\Vert _{L^{2}\left(
\mathbb{R}^{3}\right)  }^{2}.
\]
Hence, the inf-sup constant for problem (\ref{uwvp}) can be estimated from
below by%
\begin{align*}
\inf_{u\in L^{2}\left(  \mathbb{R}^{3}\right)  \backslash\left\{  0\right\}
}\sup_{v\in H^{1}\left(  \mathbb{R}^{3},\mathbb{A}_{j}^{\operatorname*{ext}%
}\right)  \backslash\left\{  0\right\}  } & \frac{\left\langle u,\mathsf{L}%
_{j}\left(  s\right)  \overline{v}\right\rangle _{\mathbb{R}^{3}}}{\left\Vert
u\right\Vert _{L^{2}\left(  \mathbb{R}^{3}\right)  }\left\Vert v\right\Vert
_{H^{1}\left(  \mathbb{R}^{3},\mathbb{A}_{j}^{\operatorname*{ext}}\right)  }%
  }\\
  & \geq\inf_{u\in L^{2}\left(  \mathbb{R}^{3}\right)  \backslash\left\{
0\right\}  }\frac{\left\Vert u\right\Vert _{L^{2}\left(  \mathbb{R}%
^{3}\right)  }}{\left\Vert \mathsf{N}_{j}\left(  \overline{s}\right)
u\right\Vert _{H^{1}\left(  \mathbb{R}^{3},\mathbb{A}_{j}^{\operatorname*{ext}%
}\right)  }}.
\end{align*}
We estimate the denominator by%
\begin{align*}
\left\Vert \mathsf{N}_{j}\left(  \overline{s}\right)  u\right\Vert
_{H^{1}\left(  \mathbb{R}^{3},\mathbb{A}_{j}^{\operatorname*{ext}}\right)
}^{2} &  =\left\Vert \operatorname*{div}\left(  \mathbb{A}_{j}%
^{\operatorname*{ext}}\nabla\mathsf{N}_{j}\left(  \overline{s}\right)
u\right)  \right\Vert _{L^{2}\left(  \mathbb{R}^{3}\right)  }^{2}+\left\Vert
\mathsf{N}_{j}\left(  \overline{s}\right)  u\right\Vert _{H^{1}\left(
\mathbb{R}^{3}\right)  }^{2}\\
&  =\left\Vert \mathsf{L}_{j}\left(  \overline{s}\right)  \mathsf{N}%
_{j}\left(  \overline{s}\right)  u-\bar{s}^{2}p_{j}^{\operatorname*{ext}%
}\mathsf{N}_{j}\left(  \overline{s}\right)  u\right\Vert _{L^{2}\left(
\mathbb{R}^{3}\right)  }^{2}+\left\Vert \mathsf{N}_{j}\left(  \overline
{s}\right)  u\right\Vert _{H^{1}\left(  \mathbb{R}^{3}\right)  }^{2}\\
&  \leq2\left\Vert \mathsf{L}_{j}\left(  \overline{s}\right)  \mathsf{N}%
_{j}\left(  \overline{s}\right)  u\right\Vert _{L^{2}\left(  \mathbb{R}%
^{3}\right)  }^{2}+2\left\vert s\right\vert ^{4}\Lambda_{j}^{2}\left\Vert
\mathsf{N}_{j}\left(  \overline{s}\right)  u\right\Vert _{L^{2}\left(
  \mathbb{R}^{3}\right)  }^{2}\\
  &\quad+\left\Vert \mathsf{N}_{j}\left(  \overline
{s}\right)  u\right\Vert _{H^{1}\left(  \mathbb{R}^{3}\right)  }^{2}\\
&  \leq2\left\Vert u\right\Vert _{L^{2}\left(  \mathbb{R}^{3}\right)  }%
^{2}+\left\vert s\right\vert ^{2}C_{0}^{2}\left\Vert \mathsf{N}_{j}\left(
\overline{s}\right)  u\right\Vert _{H^{1}\left(  \mathbb{R}^{3}\right)
;s}^{2}%
\end{align*}
for $C_{0}:=\max\left\{  \sqrt{2\Lambda_{j}^{2}+\frac{1}{s_{0}^{4}}}%
,s_{0}^{-1}\right\}  $. From (\ref{Njest}) we get%
\begin{align*}
\left\Vert \mathsf{N}_{j}\left(  \overline{s}\right)  u\right\Vert
_{H^{1}\left(  \mathbb{R}^{3}\right)  ;s} &  \leq\frac{\left\vert s\right\vert
}{\lambda_{j}\operatorname{Re}s}\left\Vert u\right\Vert _{H^{-1}\left(
                                            \mathbb{R}^{3}\right)  ;s}\\
                                          &\leq\frac{\left\vert s\right\vert }{\lambda
_{j}\operatorname{Re}s}\left(  \sup_{g\in H^{1}\left(  \mathbb{R}^{3}\right)
\backslash\left\{  0\right\}  }\frac{\left\Vert g\right\Vert _{L^{2}\left(
\mathbb{R}^{3}\right)  }}{\left\Vert g\right\Vert _{H^{1}\left(
\mathbb{R}^{3}\right)  ;s}}\right)  \left\Vert u\right\Vert _{L^{2}\left(
\mathbb{R}^{3}\right)  }\\
&  \leq\frac{1}{\lambda_{j}\operatorname{Re}s}\left\Vert u\right\Vert
_{L^{2}\left(  \mathbb{R}^{3}\right)  }%
\end{align*}
and, in turn,%
\[
\left\Vert \mathsf{N}_{j}\left(  \overline{s}\right)  u\right\Vert
_{H^{1}\left(  \mathbb{R}^{3},\mathbb{A}_{j}^{\operatorname*{ext}}\right)
}\leq\left(  2+\frac{C_{0}^{2}\left\vert s\right\vert ^{2}}{\lambda_{j}%
^{2}\left(  \operatorname{Re}s\right)  ^{2}}\right)  ^{1/2}\left\Vert
u\right\Vert _{L^{2}\left(  \mathbb{R}^{3}\right)  }.
\]
The combination of these estimates leads to the inf-sup estimate%
\[
\inf_{u\in L^{2}\left(  \mathbb{R}^{3}\right)  \backslash\left\{  0\right\}
}\sup_{v\in H^{1}\left(  \mathbb{R}^{3},\mathbb{A}_{j}^{\operatorname*{ext}%
}\right)  \backslash\left\{  0\right\}  }\frac{\left\langle u,\mathsf{L}%
_{j}\left(  s\right)  \overline{v}\right\rangle _{\mathbb{R}^{3}}}{\left\Vert
u\right\Vert _{L^{2}\left(  \mathbb{R}^{3}\right)  }\left\Vert v\right\Vert
_{H^{1}\left(  \mathbb{R}^{3},\mathbb{A}_{j}^{\operatorname*{ext}}\right)  }%
}\geq c_{1}\frac{\operatorname{Re}s}{\left\vert s\right\vert },
\]
where $c_{1}>0$ only depends on $\lambda_{j},\Lambda_{j}$, $s_{0}$.$\ $

\textbf{@(\ref{BBc}).} We choose $u=\overline{\mathsf{L}_{j}\left(  s\right)
}v$ and obtain%
\begin{equation}
\sup_{u\in L^{2}\left(  \mathbb{R}^{3}\right)  }\left\vert \left\langle
u,\mathsf{L}_{j}\left(  s\right)  \overline{v}\right\rangle _{\mathbb{R}^{3}%
}\right\vert \geq\left\vert \left\langle \overline{\mathsf{L}_{j}\left(
s\right)  }v,\mathsf{L}_{j}\left(  s\right)  \overline{v}\right\rangle
_{\mathbb{R}^{3}}\right\vert =\left\Vert \mathsf{L}_{j}\left(  s\right)
\overline{v}\right\Vert _{L^{2}\left(  \mathbb{R}^{3}\right)  }^{2}.
\label{CondcBB}%
\end{equation}
Since $\mathsf{L}_{j}\left(  s\right)  :H^{1}\left(  \mathbb{R}^{3}\right)
\rightarrow H^{-1}\left(  \mathbb{R}^{3}\right)  $ is an isomorphism (see
(\ref{mappropNjs})), the implication $\mathsf{L}_{j}\left(  s\right)
\overline{v}=0\implies v=0$ holds for all $v\in H^{1}\left(  \mathbb{R}%
^{3}\right)  $. Since $H^{1}\left(  \mathbb{R}^{3},\mathbb{A}_{j}%
^{\operatorname*{ext}}\right)  \subset H^{1}\left(  \mathbb{R}^{3}\right)  $
we conclude from (\ref{CondcBB}) that (\ref{BBc}) holds.
\end{proof}

\begin{lemma}
\label{LemUWVP}Let Assumption \ref{Acoeff} be satisfied. The mixed variational
problem\ (\ref{mvp}) is well posed.
\end{lemma}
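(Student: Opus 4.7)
The plan is to apply the Babu\v{s}ka--Ne\v{c}as--Babu\v{s}ka theorem directly to the bilinear form
\[
\mathcal{A}((\mathbf{j},u),(\mathbf{m},q)):=-\langle(\mathbb{A}_{j}^{\operatorname*{ext}})^{-1}\mathbf{j},\overline{\mathbf{m}}\rangle_{\mathbb{R}^{3}}-\langle u,\operatorname{div}\overline{\mathbf{m}}\rangle_{\mathbb{R}^{3}}-\langle\operatorname{div}\mathbf{j},\overline{q}\rangle_{\mathbb{R}^{3}}+s^{2}\langle p_{j}^{\operatorname*{ext}}u,\overline{q}\rangle_{\mathbb{R}^{3}}
\]
underlying \eqref{mvp}, on the product space $Y:=\mathbf{H}(\mathbb{R}^{3},\operatorname*{div})\times L^{2}(\mathbb{R}^{3})$ equipped with a frequency--weighted norm of the shape $\|\mathbf{j}\|_{\mathbf{L}^{2}}^{2}+|s|^{-2}\|\operatorname{div}\mathbf{j}\|_{L^{2}}^{2}+|s|^{2}\|u\|_{L^{2}}^{2}$. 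Continuity of $\mathcal{A}$ is routine from the $L^{\infty}$ bounds on $\mathbb{A}_{j}^{\operatorname*{ext}}$ and $p_{j}^{\operatorname*{ext}}$, in the same spirit as \eqref{BBa}. An equally natural alternative for uniqueness, which I would carry out first as a sanity check, is to reduce \eqref{mvp} to \eqref{uwvp}: testing the first equation with $\mathbf{m}:=\mathbb{A}_{j}^{\operatorname*{ext}}\nabla v$ (admissible in $\mathbf{H}(\mathbb{R}^{3},\operatorname*{div})$ by Definition~\ref{DefH1omegaB}) and the second with $q:=v$ for arbitrary $v\in H^{1}(\mathbb{R}^{3},\mathbb{A}_{j}^{\operatorname*{ext}})$, and invoking the boundaryless identity $\langle\operatorname{div}\mathbf{j},\overline{v}\rangle_{\mathbb{R}^{3}}=-\langle\mathbf{j},\nabla\overline{v}\rangle_{\mathbb{R}^{3}}$, the two $\mathbf{j}$--contributions cancel upon addition and the homogeneous UWVP emerges for $u$; Lemma~\ref{LemMVPwp} then forces $u=0$, after which $\mathbf{j}=0$ follows from density of $C_{c}^{\infty}(\mathbb{R}^{3})^{3}$ in $\mathbf{H}(\mathbb{R}^{3},\operatorname*{div})$ and positive--definiteness of $\mathbb{A}_{j}^{\operatorname*{ext}}$.

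For the inf--sup estimate that drives existence I employ the phase--rotated test $(\mathbf{m},q):=(-\overline{\mu}\mathbf{j},\mu u)$ with $\mu:=s/|s|$, mirroring the coercivity mechanism of Lemma~\ref{LemLaxMilgram}. The crucial observation is that the cross (coupling) terms
\[
\mu\langle u,\operatorname{div}\overline{\mathbf{j}}\rangle_{\mathbb{R}^{3}}-\overline{\mu}\langle\operatorname{div}\mathbf{j},\overline{u}\rangle_{\mathbb{R}^{3}}
\]
are purely imaginary because of the bilinear symmetry of $\langle\cdot,\cdot\rangle_{\mathbb{R}^{3}}$ (one checks $\overline{\mu\langle u,\operatorname{div}\overline{\mathbf{j}}\rangle}=\overline{\mu}\langle\operatorname{div}\mathbf{j},\overline{u}\rangle$ using that $\mathbb{A}_{j}^{\operatorname*{ext}}$ is real). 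Their real part vanishes, yielding
\[
\operatorname{Re}\mathcal{A}((\mathbf{j},u),(-\overline{\mu}\mathbf{j},\mu u))=\frac{\operatorname{Re}s}{|s|}\langle(\mathbb{A}_{j}^{\operatorname*{ext}})^{-1}\mathbf{j},\overline{\mathbf{j}}\rangle_{\mathbb{R}^{3}}+|s|(\operatorname{Re}s)\langle p_{j}^{\operatorname*{ext}}u,\overline{u}\rangle_{\mathbb{R}^{3}},
\]
which already controls $\|\mathbf{j}\|_{\mathbf{L}^{2}}^{2}$ and $|s|^{2}\|u\|_{L^{2}}^{2}$. To additionally recover a lower bound on $|s|^{-2}\|\operatorname{div}\mathbf{j}\|_{L^{2}}^{2}$, I augment the test by a small $(\mathbf{0},-\epsilon(s)\operatorname{div}\mathbf{j})$ and absorb the resulting cross term $s^{2}\epsilon(s)\langle p_{j}^{\operatorname*{ext}}u,\overline{\operatorname{div}\mathbf{j}}\rangle_{\mathbb{R}^{3}}$ via Young's inequality. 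The non--degeneracy condition (the second BNB requirement) follows by the parallel computation with the roles of trial and test variables swapped, since $\mathcal{A}$ has the same structural shape in both slots modulo $s\leftrightarrow\overline{s}$.

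The main obstacle is calibrating the Young weight $\epsilon(s)$ so that the lower bounds on $\|\mathbf{j}\|^{2}$, $\|u\|^{2}$ and $\|\operatorname{div}\mathbf{j}\|^{2}$ come out with $s$--scaling consistent with the weighted $Y$--norm and compatible with the Newton potential estimate \eqref{Njest}; a correct choice is $\epsilon(s)\sim(\operatorname{Re}s)\lambda_{j}/(|s|^{3}\Lambda_{j}^{2})$, after which BNB delivers existence, uniqueness, and an $s$--explicit stability estimate for \eqref{mvp}.
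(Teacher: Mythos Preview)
Your proposal is correct and follows the same overarching strategy as the paper (Babu\v{s}ka--Ne\v{c}as--Babu\v{s}ka with a phase-rotated test pair so that the coupling terms become purely imaginary), but the execution differs. The paper works in the unweighted $\mathbf{H}(\mathbb{R}^{3},\operatorname{div})\times L^{2}(\mathbb{R}^{3})$ norm and uses a single explicit test pair,
\[
\mathbf{m}\leftarrow -\tfrac{\bar s}{|s|}\bigl(1+\tfrac{1}{|s|^{2}}\bigr)\mathbf{j},\qquad
v\leftarrow \tfrac{s}{|s|^{3}}u-\tfrac{s}{|s|^{3}}\tfrac{1}{p_{j}^{\operatorname*{ext}}}\operatorname{div}\mathbf{j},
\]
where the additional $\tfrac{1}{p_{j}^{\operatorname*{ext}}}\operatorname{div}\mathbf{j}$ correction produces the term $\tfrac{\bar s}{|s|^{3}}\langle \tfrac{1}{p_{j}^{\operatorname*{ext}}}\operatorname{div}\mathbf{j},\overline{\operatorname{div}\mathbf{j}}\rangle$ directly and keeps \emph{all} coupling terms purely imaginary, so no Young/absorption step is needed. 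Your two-stage variant (basic rotation $(-\bar\mu\mathbf{j},\mu u)$, then augmentation by $(\mathbf{0},-\epsilon(s)\operatorname{div}\mathbf{j})$ and Young with $\epsilon(s)\sim(\operatorname{Re}s)\lambda_{j}/(|s|^{3}\Lambda_{j}^{2})$) is a perfectly valid alternative and perhaps more transparent, though it yields slightly messier $s$-dependent constants. Two minor remarks: the parenthetical ``using that $\mathbb{A}_{j}^{\operatorname*{ext}}$ is real'' is irrelevant for the coupling-term identity (only the symmetry of the bilinear pairing $\langle\cdot,\cdot\rangle_{\mathbb{R}^{3}}$ matters there); and your uniqueness-via-reduction-to-UWVP is exactly the content of the paper's subsequent Lemma~\ref{Propequiuwvpmvp}, Part~1, so it is sound but not needed once you have the full inf--sup.
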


\begin{proof}
Again, we employ the Babu\v{s}ka-Lax-Milgram theorem and prove the relevant
properties for the sesquilinear form and anti-linear form associated with
(\ref{mvp}). The sesquilinear form $b:\left(  \mathbf{H}\left(  \mathbb{R}%
^{3},\operatorname*{div}\right)  ,L^{2}\left(  \mathbb{R}^{3}\right)  \right)
\times\left(  \mathbf{H}\left(  \mathbb{R}^{3},\operatorname*{div}\right)
,L^{2}\left(  \mathbb{R}^{3}\right)  \right)  \rightarrow\mathbb{C}$ related
to the mixed variational problem (\ref{mvp}) is given by%
\[
b\left(  \left(  \mathbf{j},u\right)  ,\left(  \mathbf{m},v\right)  \right)
:=-\left\langle \left(  \mathbb{A}_{j}^{\operatorname*{ext}}\right)
^{-1}\mathbf{j},\overline{\mathbf{m}}\right\rangle _{\mathbb{R}^{3}%
}-\left\langle u,\operatorname*{div}\overline{\mathbf{m}}\right\rangle
_{\mathbb{R}^{3}}-\left\langle \operatorname{div}\mathbf{j},\overline
{v}\right\rangle _{\mathbb{R}^{3}}+s^{2}\left\langle p_{j}%
^{\operatorname*{ext}}u,\overline{v}\right\rangle _{\mathbb{R}^{3}}.
\]
The anti-linear form associated to the right-hand side is $f:\left(
\mathbf{H}\left(  \mathbb{R}^{3},\operatorname*{div}\right)  ,L^{2}\left(
\mathbb{R}^{3}\right)  \right)  \rightarrow\mathbb{C}$%
\[
f\left(  \left(  \mathbf{m},v\right)  \right)  :=\left\langle \psi
,\gamma_{\mathbf{n};j}\left(  s\right)  \overline{\mathbf{m}}\right\rangle
_{\Gamma_{j}}.
\]
We will verify the four conditions for the Babu\v{s}ka-Lax-Milgram theorem.
The continuity of $b$ follows by straightforward Cauchy-Schwarz inequalities.
For the analogue of (\ref{BBb}) we choose
\begin{equation}
v\leftarrow\frac{s}{\left\vert s\right\vert ^{3}}u-\frac{s}{\left\vert
s\right\vert ^{3}}\frac{1}{p_{j}^{\operatorname*{ext}}}\operatorname*{div}%
\mathbf{j\quad}\text{and\quad}\mathbf{m}\leftarrow-\frac{\bar{s}}{\left\vert
s\right\vert }\left(  1+\frac{1}{\left\vert s\right\vert ^{2}}\right)
\mathbf{j} \label{substitution}
\end{equation}
and obtain after some straightforward manipulations%
\begin{align*}
b\left(  \left(  \mathbf{j},u\right)  ,\left(  \mathbf{m},v\right)  \right)
&  =\frac{s}{\left\vert s\right\vert }\left(  1+\frac{1}{\left\vert
s\right\vert ^{2}}\right)  \left\langle \left(  \mathbb{A}_{j}%
^{\operatorname*{ext}}\right)  ^{-1}\mathbf{j},\overline{\mathbf{j}%
}\right\rangle _{\mathbb{R}^{3}}+\frac{\bar{s}}{\left\vert s\right\vert ^{3}%
}\left\langle \frac{1}{p_{j}^{\operatorname*{ext}}}\operatorname{div}%
\mathbf{j},\overline{\operatorname*{div}\mathbf{j}}\right\rangle
_{\mathbb{R}^{3}}\\
&  +2\operatorname*{i}\operatorname{Im}\left(  \frac{s}{\left\vert
s\right\vert ^{3}}\left\langle u,\operatorname*{div}\overline{\mathbf{j}%
}\right\rangle _{\mathbb{R}^{3}}\right)  +\frac{s}{\left\vert s\right\vert }\left\langle p_{j}^{\operatorname*{ext}%
}u,\overline{u}\right\rangle _{\mathbb{R}^{3}}.
\end{align*}
Hence,
\begin{align*}
\left\vert b\left(  \left(  \mathbf{j},u\right)  ,\left(  \mathbf{m},v\right)
\right)  \right\vert  &  \geq\operatorname{Re}b\left(  \left(  \mathbf{j}%
,u\right)  ,\left(  \mathbf{m},v\right)  \right) \\
&  \geq\frac{\operatorname{Re}s}{\Lambda_{j}\left\vert s\right\vert }\left(
1+\frac{1}{\left\vert s\right\vert ^{2}}\right)  \left\Vert \mathbf{j}%
\right\Vert _{\mathbf{L}^{2}\left(  \mathbb{R}^{3}\right)  }^{2}%
+\frac{\operatorname{Re}s}{\Lambda_{j}\left\vert s\right\vert ^{3}}\left\Vert
\operatorname{div}\mathbf{j}\right\Vert _{L^{2}\left(  \mathbb{R}^{3}\right)
  }^{2}\\
  & \quad+\frac{\operatorname{Re}s}{\left\vert s\right\vert }\lambda_{j}\left\Vert
u\right\Vert _{L^{2}\left(  \mathbb{R}^{3}\right)  }^{2}.
\end{align*}
From this, the estimate%
\[
\left\vert b\left(  \left(  \mathbf{j},u\right)  ,\left(  \mathbf{m},v\right)
\right)  \right\vert \geq\frac{\operatorname{Re}s}{\left\vert s\right\vert
^{3}}\min\left\{  \frac{1}{\Lambda_{j}},s_{0}^{2}\lambda_{j}\right\}  \left(
\left\Vert \mathbf{j}\right\Vert _{\mathbf{H}\left(  \mathbb{R}^{3}%
,\operatorname*{div}\right)  }^{2}+\left\Vert u\right\Vert _{L^{2}\left(
\mathbb{R}^{3}\right)  }^{2}\right)
\]
follows. The choice (\ref{substitution}) can be bounded by%
\begin{align*}
\left\Vert \mathbf{m}\right\Vert _{\mathbf{H}\left(  \mathbb{R}^{3}%
,\operatorname*{div}\right)  }^{2}+\left\Vert v\right\Vert _{L^{2}\left(
\mathbb{R}^{3}\right)  }^{2}  &  \leq\left(  1+\frac{1}{\left\vert
s\right\vert ^{2}}\right)  ^{2}\left\Vert \mathbf{j}\right\Vert _{\mathbf{H}%
                                \left(  \mathbb{R}^{3},\operatorname*{div}\right)  }^{2}\\
  &\quad +\frac{2}{\left\vert
s\right\vert ^{4}}\left(  \left\Vert u\right\Vert _{L^{2}\left(
\mathbb{R}^{3}\right)  }^{2}+\frac{1}{\lambda_{j}^{2}}\left\Vert
\mathbf{j}\right\Vert _{\mathbf{H}\left(  \mathbb{R}^{3},\operatorname*{div}%
\right)  }^{2}\right) \\
&  \leq C_{0}\left(  \left\Vert \mathbf{j}\right\Vert _{\mathbf{H}\left(
\mathbb{R}^{3},\operatorname*{div}\right)  }^{2}+\left\Vert u\right\Vert
_{L^{2}\left(  \mathbb{R}^{3}\right)  }^{2}\right)
\end{align*}
for a positive constant $C_{0}$ which depends solely on $s_{0}$ and
$\lambda_{j}$. This leads to%
\[
\left\vert b\left(  \left(  \mathbf{j},u\right)  ,\left(  \mathbf{m},v\right)
\right)  \right\vert \geq c_{1}\left(  \left\Vert \mathbf{j}\right\Vert
_{\mathbf{H}\left(  \mathbb{R}^{3},\operatorname*{div}\right)  }%
^{2}+\left\Vert u\right\Vert _{L^{2}\left(  \mathbb{R}^{3}\right)  }%
^{2}\right)  ^{1/2}\left(  \left\Vert \mathbf{m}\right\Vert _{\mathbf{H}%
\left(  \mathbb{R}^{3},\operatorname*{div}\right)  }^{2}+\left\Vert
v\right\Vert _{L^{2}\left(  \mathbb{R}^{3}\right)  }^{2}\right)  ^{1/2}.
\]

Next, we prove the analogue of (\ref{BBc}). Let $\left(  \mathbf{m},v\right)
\in\left(  \mathbf{H}\left(  \mathbb{R}^{3},\operatorname*{div}\right)
,L^{2}\left(  \mathbb{R}^{3}\right)  \right)  $ and assume%
\begin{equation}
\forall\left(  \mathbf{j},u\right)  \in\left(  \mathbf{H}\left(
\mathbb{R}^{3},\operatorname*{div}\right)  ,L^{2}\left(  \mathbb{R}%
^{3}\right)  \right)  \qquad b\left(  \left(  \mathbf{j},u\right)  ,\left(
\mathbf{m},v\right)  \right)  =0. \label{CondCBiliB}%
\end{equation}
The analogous choice to (\ref{substitution}) for the primal variables $\left(
\mathbf{j},u\right)  $ is%
\[
u\leftarrow\frac{\bar{s}}{\left\vert s\right\vert ^{3}}v-\frac{\bar{s}%
}{\left\vert s\right\vert ^{3}}\frac{1}{p_{j}^{\operatorname*{ext}}%
}\operatorname*{div}\mathbf{m\quad}\text{and\quad}\mathbf{j}=-\frac
{s}{\left\vert s\right\vert }\left(  1+\frac{1}{\left\vert s\right\vert ^{2}%
}\right)  \mathbf{m}%
\]
and we obtain in the same way as before%
\begin{align*}
b\left(  \left(  \mathbf{j},u\right)  ,\left(  \mathbf{m},v\right)  \right)
&  =\frac{s}{\left\vert s\right\vert }\left(  1+\frac{1}{\left\vert
s\right\vert ^{2}}\right)  \left\langle \left(  \mathbb{A}_{j}%
^{\operatorname*{ext}}\right)  ^{-1}\mathbf{m},\overline{\mathbf{m}%
}\right\rangle _{\mathbb{R}^{3}}+\frac{\bar{s}}{\left\vert s\right\vert ^{3}%
}\left\langle \frac{1}{p_{j}^{\operatorname*{ext}}}\operatorname*{div}%
\mathbf{m},\operatorname*{div}\overline{\mathbf{m}}\right\rangle
_{\mathbb{R}^{3}}\\
&  +2\operatorname*{i}\operatorname{Im}\left(  \frac{s}{\left\vert
s\right\vert ^{3}}\left\langle \operatorname{div}\mathbf{m},\overline
{v}\right\rangle _{\mathbb{R}^{3}}\right) \\
&  +\frac{s}{\left\vert s\right\vert }\left\langle p_{j}^{\operatorname*{ext}%
}v,\overline{v}\right\rangle _{\mathbb{R}^{3}}.
\end{align*}
For the real part the estimate%
\[
\operatorname{Re}b\left(  \left(  \mathbf{j},u\right)  ,\left(  \mathbf{m}%
,v\right)  \right)  \geq\frac{\operatorname{Re}s}{\left\vert s\right\vert
^{3}}\min\left\{  \frac{1}{\Lambda_{\min}},s_{0}^{2}\lambda_{\min}\right\}
\left(  \left\Vert \mathbf{m}\right\Vert _{\mathbf{H}\left(  \mathbb{R}%
^{3},\operatorname*{div}\right)  }^{2}+\left\Vert v\right\Vert _{L^{2}\left(
\mathbb{R}^{3}\right)  }^{2}\right)
\]
follows. In view of (\ref{CondCBiliB}), $\left(  \mathbf{m},v\right)  =\left(
\mathbf{0},0\right)  $ follows.

The continuity of the anti-linear form $f$ follows by combining a
Cauchy-Schwarz inequality%
\[
\left\vert f\left(  \left(  \mathbf{m},v\right)  \right)  \right\vert
\leq\left\Vert \psi\right\Vert _{H^{1/2}\left(  \Gamma_{j}\right)  }\left\vert
s\right\vert ^{-1/2}\left\Vert \gamma_{\mathbf{n};j}(s)\left(  \mathbf{m}\right)
\right\Vert _{H^{-1/2}\left(  \Gamma_{j}\right)  }
\]
with the estimate (\ref{normaltracebounded}) for the normal trace.
\end{proof}

The next lemma states an equivalence of the solutions of (\ref{uwvp}) and (\ref{mvp}).

\begin{lemma}\label{Propequiuwvpmvp}
  Let Assumption \ref{Acoeff} be satisfied.
  The mixed variational problem\ (\ref{mvp}) and the ultra-weak variational problem (\ref{uwvp}) are equivalent:
  \begin{enumerate}
    \item If $\left(  \mathbf{j},u\right)  \in\left(  \mathbf{H}\left(
        \mathbb{R}^3,\operatorname*{div}\right)  ,L^{2}\left(  \mathbb{R}%
        ^{3}\right)  \right)  $ is the solution of (\ref{mvp}), then $u$ solves
    (\ref{uwvp}).
    \item If $u$ is the solution of (\ref{uwvp}), then the pair \modified{$(\mathbf{j}, u) := \left( \mathbb{A}_{j}^{\operatorname*{ext}}\nabla_{\operatorname*{pw};j}u,u \right)$} solves (\ref{mvp}).
    In particular, it holds
  $\mathbf{j}\in\mathbf{H}\left( \mathbb{R}^{3},\operatorname{div}\right) $.
\item The solution $u$ of the ultra-weak variational problem satisfies the
jump relation%
\begin{equation}
\left[  u\right]  _{\operatorname*{D};j}\left(  s\right)  =\psi.
\label{jumpreldlp}%
\end{equation}
\end{enumerate}
\end{lemma}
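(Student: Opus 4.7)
The plan is to address the three parts in the stated order, leveraging the well-posedness of both variational problems (Lemmas \ref{LemMVPwp} and \ref{LemUWVP}) as the main lever.

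For part 1, the decisive choice is to test the first MVP equation with $\mathbf{m}\in\mathbf{H}(\mathbb{R}^3,\operatorname{div})$ chosen so that $\overline{\mathbf{m}}=\mathbb{A}_j^{\operatorname{ext}}\nabla\overline{v}$ for arbitrary $v\in H^1(\mathbb{R}^3,\mathbb{A}_j^{\operatorname{ext}})$; this is legitimate precisely because the graph-space definition guarantees $\operatorname{div}(\mathbb{A}_j^{\operatorname{ext}}\nabla\overline{v})\in L^2$. I would simultaneously test the second MVP equation with $\overline{q}=\overline{v}$. Because $\mathbf{j}\in\mathbf{H}(\mathbb{R}^3,\operatorname{div})$ has no distributional divergence concentrated on $\Gamma_j$, integration by parts on the whole space yields $\langle\operatorname{div}\mathbf{j},\overline{v}\rangle_{\mathbb{R}^3}=-\langle\mathbf{j},\nabla\overline{v}\rangle_{\mathbb{R}^3}$. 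Adding the two tested equations eliminates the $\mathbf{j}$-terms and leaves
\[
\langle u,\mathsf{L}_j(s)\overline{v}\rangle_{\mathbb{R}^3}=\langle\psi,\gamma_{\operatorname{N};j}^{\operatorname{ext}}(s)\overline{v}\rangle_{\Gamma_j},
\]
after using $\gamma_{\mathbf{n};j}(\mathbb{A}_j^{\operatorname{ext}}\nabla\overline{v})=\gamma_{\operatorname{N};j}^{\operatorname{ext}}\overline{v}$ and the $s^{\pm 1/2}$-scaling conventions. This is exactly \eqref{uwvp}.

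For part 2, rather than upgrade the regularity of the UWVP solution directly (which is awkward under mere $L^\infty$ coefficients), I would route through uniqueness. Let $u$ denote the unique UWVP solution and $(\tilde{\mathbf{j}},\tilde{u})$ the unique MVP solution; by part 1, $\tilde{u}$ solves UWVP, hence $\tilde{u}=u$. To identify $\tilde{\mathbf{j}}$, test the first MVP equation with $\mathbf{m}\in C_c^\infty(\Omega_j^\sigma)^3$ for $\sigma\in\{+,-\}$: the boundary term on $\Gamma_j$ vanishes and the remaining identity reads $\nabla u|_{\Omega_j^\sigma}=(\mathbb{A}_j^{\operatorname{ext}})^{-1}\tilde{\mathbf{j}}|_{\Omega_j^\sigma}$ distributionally. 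Since the right-hand side is in $\mathbf{L}^2(\Omega_j^\sigma)$, this simultaneously upgrades $u|_{\Omega_j^\sigma}$ to $H^1(\Omega_j^\sigma)$ and delivers the identification $\tilde{\mathbf{j}}=\mathbb{A}_j^{\operatorname{ext}}\nabla_{\operatorname{pw};j}u$; membership in $\mathbf{H}(\mathbb{R}^3,\operatorname{div})$ is automatic.

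For part 3, I would substitute the identification $\mathbf{j}=\mathbb{A}_j^{\operatorname{ext}}\nabla_{\operatorname{pw};j}u$ back into the first MVP equation with a general $\mathbf{m}\in\mathbf{H}(\mathbb{R}^3,\operatorname{div})$ and apply Green's first identity on $\Omega_j^-$ and $\Omega_j^+$ separately. Exploiting $\gamma_{\mathbf{n};j}^{+}\overline{\mathbf{m}}=-\gamma_{\mathbf{n};j}^{-}\overline{\mathbf{m}}$ for $\mathbf{m}\in\mathbf{H}(\mathbb{R}^3,\operatorname{div})$, the volume contributions cancel and the surface terms coalesce into
\[
\langle[u]_{\operatorname{D};j},\gamma_{\mathbf{n};j}\overline{\mathbf{m}}\rangle_{\Gamma_j}=\langle s^{-1/2}\psi,\gamma_{\mathbf{n};j}\overline{\mathbf{m}}\rangle_{\Gamma_j}.
\]
Surjectivity of the normal trace $\gamma_{\mathbf{n};j}:\mathbf{H}(\mathbb{R}^3,\operatorname{div})\to H^{-1/2}(\Gamma_j)$ then forces $[u]_{\operatorname{D};j}=s^{-1/2}\psi$, equivalently $[u]_{\operatorname{D};j}(s)=\psi$. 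The principal difficulty is part 2 — a frontal attack on the piecewise $H^1$-regularity of $u$ from UWVP with only $L^\infty$ coefficients would be delicate, but the uniqueness detour through the well-posed MVP reduces it to essentially bookkeeping. Beyond that, the only real care required is consistent tracking of the complex conjugates and of the $s^{\pm 1/2}$ scalings encoded in $\gamma_{\operatorname{N};j}^{\operatorname{ext}}(s)$, $\gamma_{\mathbf{n};j}(s)$, and $[u]_{\operatorname{D};j}(s)$.
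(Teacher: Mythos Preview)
Your proposal is correct and follows essentially the same approach as the paper: test the first MVP equation with $\mathbf{m}=\mathbb{A}_j^{\operatorname{ext}}\nabla v$ and combine with the second equation to recover the UWVP, use uniqueness of both problems to identify $u_{\operatorname{uw}}=u_{\operatorname{m}}$ and then $\mathbf{j}=\mathbb{A}_j^{\operatorname{ext}}\nabla_{\operatorname{pw};j}u$ via compactly supported test vector fields, and finally integrate by parts piecewise in the first MVP equation and invoke surjectivity of the normal trace for the jump relation. The only cosmetic difference is that the paper establishes the piecewise $H^1$-regularity of $u$ already at the end of Part~1, whereas you fold it into Part~2; this is immaterial.
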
%

\begin{proof}
\textbf{Part 1. }

Let $\left(  \mathbf{j},u\right)  \in\left(  \mathbf{H}\left(  \mathbb{R}%
^{3},\operatorname*{div}\right)  ,L^{2}\left(  \mathbb{R}^{3}\right)  \right)
$ be the solution of (\ref{mvp}). We test the first equation in (\ref{mvp}) with
$\mathbf{m}:=\mathbb{A}_{j}^{\operatorname*{ext}}\nabla v$ for $v\in
H^{1}\left(  \mathbb{R}^{3},\mathbb{A}_{j}^{\operatorname*{ext}}\right)  $.
Clearly, $\mathbf{m}\in\mathbf{H}\left(  \mathbb{R}^{3},\operatorname*{div}%
\right)  $ is an admissible test function. This leads to%
\[
-\left\langle \mathbf{j},\nabla\overline{v}\right\rangle _{\mathbb{R}^{3}%
}-\left\langle u,\operatorname*{div}\left(  \mathbb{A}_{j}%
^{\operatorname*{ext}}\nabla\overline{v}\right)  \right\rangle _{\mathbb{R}%
^{3}}=\left\langle \psi,\gamma_{\operatorname*{N};j}^{\operatorname*{ext}%
}\left(  s\right)  \overline{v}\right\rangle _{\Gamma_{j}}\quad\forall v\in
H^{1}\left(  \mathbb{R}^{3},\mathbb{A}_{j}^{\operatorname*{ext}}\right)  .
\]
Next, we test the second equation in (\ref{mvp}) with $q\in H^{1}\left(
\mathbb{R}^{3},\mathbb{A}_{j}^{\operatorname*{ext}}\right)  $ and integrate by
parts%
\[
\left\langle \mathbf{j},\nabla\overline{q}\right\rangle _{\mathbb{R}^{3}%
}+s^{2}\left\langle p_{j}^{\operatorname*{ext}}u,\overline{q}\right\rangle
_{\mathbb{R}^{3}}=0\quad\forall q\in H^{1}\left( \mathbb{R}^{3},\mathbb{A}_{j}^{\operatorname*{ext}}\right)  .
\]
We set $q=v$ and sum both equations, which yields
\[
\left\langle u,\mathsf{L}_{j}\left(  s\right)  \overline{v}\right\rangle
_{\mathbb{R}^{3}}=\left\langle \psi,\gamma_{\operatorname*{N};j}%
^{\operatorname*{ext}}\left(  s\right)  \overline{v}\right\rangle _{\Gamma
_{j}}\quad\forall v\in H^{1}\left(  \mathbb{R}^{3},\mathbb{A}_{j}%
^{\operatorname*{ext}}\right)  .
\]
Hence, the solution $u$ of the mixed variational problem (\ref{mvp}) solves
the ultra-weak problem (\ref{uwvp}). Lemma \ref{LemMVPwp} implies uniqueness
of solutions of (\ref{mvp}) so that $u$ is the unique solution of (\ref{uwvp}).

Now, we test the first equation in (\ref{mvp}) with functions $\mathbf{m}%
\in\mathbf{C}_{0}^{\infty}\left(  \mathbb{R}^{3}\right)  $ satisfying
$\operatorname*{supp}\left(  \mathbf{m}\right)  \subset\subset\Omega
_{j}^{\sigma}$ for some $\sigma\in\left\{  +,-\right\}  $. This leads to
$\nabla_{\operatorname*{pw};j}u=\left(  \mathbb{A}_{j}^{\operatorname*{ext}%
}\right)  ^{-1}\mathbf{j}\in\mathbf{L}^{2}\left(  \mathbb{R}^{3}\right)  $
and, in turn, to $u\in H^{1}\left(  \mathbb{R}^{3}\backslash\Gamma_{j}\right)
$.\medskip

\textbf{Part 2.}

Lemma \ref{LemMVPwp} and \ref{LemUWVP} imply the existence and uniqueness of
solutions for the variational problems (\ref{uwvp}) and (\ref{mvp}). For
$\psi\in H^{1/2}\left(  \Gamma_{j}\right)  $, let $u_{\operatorname*{uw}}$
denote the solution of (\ref{uwvp}) and $\left(  \mathbf{j}_{\operatorname*{m}%
},u_{\operatorname*{m}}\right)  $ the solution of (\ref{mvp}). Part 1 implies
that $u_{\operatorname*{m}}\in H^{1}\left(  \mathbb{R}^{3}\backslash\Gamma
_{j}\right)  $ solves the ultra-weak problem so that $u_{\operatorname*{uw}%
}=u_{\operatorname*{m}}$. Vice versa, $u_{\operatorname*{uw}}$ equals the
$u_{\operatorname*{m}}$-component of the solution for the mixed variational
problem. We test the first equation in (\ref{mvp}) with test functions
$\mathbf{m}\in\mathbf{H}\left(  \mathbb{R}^{3},\operatorname*{div}\right)  $
with compact support in $\Omega_{j}^{-}\cup\Omega_{j}^{+}$ and obtain by
integration by parts%
\[
\mathbf{j}_{\operatorname*{m}}=\mathbb{A}_{j}^{\operatorname*{ext}}%
\nabla_{\operatorname*{pw};j}u_{\operatorname*{m}}=\mathbb{A}_{j}%
^{\operatorname*{ext}}\nabla_{\operatorname*{pw};j}u_{\operatorname*{uw}}.
\]
Since $\mathbf{j}_{\operatorname*{m}}\in\mathbf{H}\left(  \mathbb{R}%
^{3},\operatorname{div}\right)  $ it follows that $\left(  \mathbb{A}%
_{j}^{\operatorname*{ext}}\nabla_{\operatorname*{pw};j}u_{\operatorname*{uw}%
},u_{\operatorname*{uw}}\right)  \in\mathbf{H}\left(  \mathbb{R}%
^{3},\operatorname{div}\right)  \times L^{2}\left(  \mathbb{R}^{3}\right)  $
solves the mixed variational problem.\medskip

\textbf{Part 3.}

We consider the first equation of the mixed problem (\ref{mvp}) and employ
$\mathbf{j}=\mathbb{A}_{j}^{\operatorname*{ext}}\nabla_{\operatorname*{pw}%
;j}u$. Integration by parts in each subdomain yields%
\begin{align}
\left\langle \psi,\gamma_{\mathbf{n};j}\left(  s\right)  \overline{\mathbf{m}%
}\right\rangle _{\Gamma_{j}}  &  =-\left\langle \left(  \mathbb{A}%
_{j}^{\operatorname*{ext}}\right)  ^{-1}\mathbf{j},\overline{\mathbf{m}%
}\right\rangle _{\mathbb{R}^{3}}-\left\langle u,\operatorname*{div}%
                                \overline{\mathbf{m}}\right\rangle _{\mathbb{R}^{3}}\\
  &=-\left\langle
\nabla_{\operatorname*{pw};j}u,\overline{\mathbf{m}}\right\rangle
_{\mathbb{R}^{3}}-\left\langle u,\operatorname*{div}\overline{\mathbf{m}%
}\right\rangle _{\mathbb{R}^{3}}\nonumber\\
&  =-\left\langle \nabla_{\operatorname*{pw};j}u,\overline{\mathbf{m}%
}\right\rangle _{\mathbb{R}^{3}}+\left\langle \nabla u,\overline{\mathbf{m}%
}\right\rangle _{\mathbb{R}^{3}\backslash\Gamma_{j}}+\left\langle \left[
u\right]  _{\operatorname*{D};j}\left(  s\right)  ,\gamma_{\mathbf{n}%
;j}\left(  s\right)  \left(  \overline{\mathbf{m}}\right)  \right\rangle
_{\Gamma_{j}}\nonumber\\
&  =\left\langle \left[  u\right]  _{\operatorname*{D};j}\left(  s\right)
,\gamma_{\mathbf{n};j}\left(  s\right)  \left(  \overline{\mathbf{m}}\right)
\right\rangle _{\Gamma_{j}}\quad\forall\mathbf{m}\in\mathbf{H}\left(
\mathbb{R}^{3},\operatorname*{div}\right)  . \label{partinlasteq}%
\end{align}
The range of the normal trace is $H^{-1/2}\left(  \Gamma_{j}\right)
=\gamma_{\mathbf{n};j}\left(  s\right)  \left(  \mathbf{H}\left(
\mathbb{R}^{3},\operatorname*{div}\right)  \right)  $ (cf. \cite[Cor.
2.8]{Girault86}) so that the jump relation (\ref{jumpreldlp}) follows from
(\ref{partinlasteq}).
\end{proof}

The well-posedness of the ultra-weak variational problem allows us to define
the double layer potential as its solution.

\begin{definition}
Let Assumption \ref{Acoeff} be satisfied. For $1\leq j\leq n_{\Omega}$ and
$\psi\in H^{1/2}\left(  \Gamma_{j}\right)  $ the \emph{\bf double layer potential}
$\mathsf{D}_{j}\left(  s\right)  \psi\in L^{2}\left(  \mathbb{R}^{3}\right)  $
is given as the unique solution of the ultra-weak variational problem%
\begin{equation}
  \boxed{\left\langle \mathsf{D}_{j}\left(  s\right)  \psi,\mathsf{L}_{j}\left(
s\right)  \overline{v}\right\rangle _{\mathbb{R}^{3}}=\left\langle \psi
,\gamma_{\operatorname*{N};j}^{\operatorname*{ext}}\left(  s\right)
\overline{v}\right\rangle _{\Gamma_{j}}\qquad\forall v\in H^{1}\left(
\mathbb{R}^{3},\mathbb{A}_{j}^{\operatorname*{ext}}\right)}  . \label{defDLP}%
\end{equation}
\end{definition}

\begin{remark}
  Note that our definition (\ref{defDLP}) has the same form as formula (4.7) in
  \cite{CostabelElemRes}. However, we employ this directly as the definition while, in
  \cite{CostabelElemRes} (where the coefficients are assumed to be infinitely smooth) a
  different definition is used and (\ref{defDLP}) is deduced as an intermediate step
  within the proof of the jump relations.
\end{remark}

In the following lemma, important properties of $\mathsf{D}_{j}\left(
s\right)  $ are collected which are well-known, e.g., for PDEs with piecewise
constant coefficients.

\begin{lemma}
\label{LemMapPropDLP}Let Assumption \ref{Acoeff} be satisfied. For $\psi\in
H^{1/2}\left(  \Gamma\right)  $, the double layer potential $w:=\mathsf{D}%
_{j}\left(  s\right)  \psi$ satisfies $w\in H^{1}\left(  \mathbb{R}%
^{3}\backslash\Gamma_{j},\mathbb{A}_{j}^{\operatorname*{ext}}\right)  $, the
restrictions $w^{\sigma}:=\left.  w\right\vert _{\Omega_{j}^{\sigma}}$ solve
the homogeneous equations:%
\begin{equation}
\mathsf{L}_{j}^{\sigma}\left(  s\right)  w^{\sigma}=0\quad\text{in }\Omega
_{j}^{\sigma}\text{, }\sigma\in\left\{  +,-\right\}  , \label{DlpProp1}%
\end{equation}
and the jump relations hold:%
\begin{equation}%
  \boxed{
\begin{array}
[c]{ll}%
\left[  \left(  \mathsf{D}_{j}\left(  s\right)  \psi\right)  \right]
_{\operatorname*{D};j}\left(  s\right)  =\psi, & \left[  \left(
\mathsf{D}_{j}\left(  s\right)  \psi\right)  \right]  _{\operatorname*{N}%
;j}^{\operatorname*{ext}}\left(  s\right)  =0
\end{array}}.
\label{jumprelDLP}%
\end{equation}

\end{lemma}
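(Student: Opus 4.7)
The plan is to leverage the equivalence of the ultra-weak variational problem (\ref{uwvp}) and the mixed variational problem (\ref{mvp}) established in Lemma \ref{Propequiuwvpmvp}, together with the standard interpretation of membership in $\mathbf{H}(\mathbb{R}^3,\operatorname*{div})$ as a zero-jump condition for normal traces.

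First I would set $w:=\mathsf{D}_{j}(s)\psi$, so by definition (\ref{defDLP}) the function $w\in L^{2}(\mathbb{R}^{3})$ is the unique solution of the ultra-weak variational problem (\ref{uwvp}). By Lemma \ref{Propequiuwvpmvp} (part 2), the pair $(\mathbf{j},w)$ with $\mathbf{j}:=\mathbb{A}_{j}^{\operatorname*{ext}}\nabla_{\operatorname*{pw};j}w$ solves the mixed problem (\ref{mvp}); in particular $\mathbf{j}\in\mathbf{H}(\mathbb{R}^{3},\operatorname*{div})$, which already tells us that $w\in H^{1}(\mathbb{R}^{3}\setminus\Gamma_{j})$ and that $\operatorname*{div}(\mathbb{A}_{j}^{\sigma}\nabla w^{\sigma})$ is locally in $L^{2}$. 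Testing the second equation of (\ref{mvp}) with arbitrary $q\in C_{0}^{\infty}(\Omega_{j}^{\sigma})$ then yields $-\operatorname*{div}\mathbf{j}+s^{2}p_{j}^{\operatorname*{ext}}w=0$ pointwise in the distributional sense on each $\Omega_{j}^{\sigma}$, i.e.,
\begin{equation*}
  \mathsf{L}_{j}^{\sigma}(s)w^{\sigma}=0\quad\text{in }\Omega_{j}^{\sigma},\quad\sigma\in\{+,-\},
\end{equation*}
and in particular $w\in H^{1}(\mathbb{R}^{3}\setminus\Gamma_{j},\mathbb{A}_{j}^{\operatorname*{ext}})$. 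This establishes the regularity claim and (\ref{DlpProp1}).

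For the Dirichlet jump $[w]_{\operatorname*{D};j}(s)=\psi$ there is nothing to prove: this is exactly (\ref{jumpreldlp}) from Lemma \ref{Propequiuwvpmvp} (part 3). For the Neumann jump I would argue as follows. By construction $\mathbf{j}\in\mathbf{H}(\mathbb{R}^{3},\operatorname*{div})$, so by the normal-trace continuity in Proposition \ref{Proptrace}(2) we have
\begin{equation*}
  \gamma_{\mathbf{n};j}^{-}(\mathbf{j}|_{\Omega_{j}^{-}})=-\gamma_{\mathbf{n};j}^{+}(\mathbf{j}|_{\Omega_{j}^{+}}).
\end{equation*}
Unfolding $\mathbf{j}|_{\Omega_{j}^{\sigma}}=\mathbb{A}_{j}^{\sigma}\nabla w^{\sigma}$ and applying the definition of the co-normal trace (Proposition \ref{Proptrace}(3)), the left-hand side equals $\gamma_{\operatorname*{N};j}^{\operatorname*{ext},-}w^{-}$ and the right-hand side equals $-(-\gamma_{\operatorname*{N};j}^{\operatorname*{ext},+}w^{+})=\gamma_{\operatorname*{N};j}^{\operatorname*{ext},+}w^{+}$ after accounting for the reversed orientation on the $+$ side. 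Substituting into (\ref{jumpdefb}) gives $[w]_{\operatorname*{N};j}^{\operatorname*{ext}}=0$, and the scaled version $[w]_{\operatorname*{N};j}^{\operatorname*{ext}}(s)=s^{-1/2}[w]_{\operatorname*{N};j}^{\operatorname*{ext}}=0$ follows.

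The only subtlety I expect is bookkeeping with the sign conventions and the $s^{\pm1/2}$ scalings in Definition \ref{DefJumpmean}, because the continuity of normal traces in $\mathbf{H}(\operatorname*{div})$ and the definition of $\gamma_{\operatorname*{N};j}^{\operatorname*{ext},+}$ both already encode an orientation flip, so one must be careful not to double-count signs. Once this is untangled, the Neumann-jump vanishing is essentially a restatement of $\mathbf{j}\in\mathbf{H}(\mathbb{R}^{3},\operatorname*{div})$, exactly dual to the way the Dirichlet-jump identity (\ref{jumpreldlp}) encoded $\psi$ as the source on the right-hand side of (\ref{mvp}).
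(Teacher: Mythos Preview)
Your proof is correct and follows essentially the same route as the paper: both invoke Lemma~\ref{Propequiuwvpmvp} to pass to the mixed formulation, read off the homogeneous PDE from the second equation of (\ref{mvp}), and obtain the vanishing Neumann jump from $\mathbf{j}\in\mathbf{H}(\mathbb{R}^{3},\operatorname*{div})$. The only difference is that you cite the Dirichlet jump directly from (\ref{jumpreldlp}) in Lemma~\ref{Propequiuwvpmvp}(3), whereas the paper re-derives it by integrating the first equation of (\ref{mvp}) by parts; your shortcut is entirely legitimate since that identity has already been established.
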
%

In fact, the double layer potential is a continuous operator $\mathsf{D}_{j}:H^{1/2}\left(
  \Gamma_{j}\right)\to
H^{1}(\mathbb{R}^{3}\setminus\Gamma_{j},\mathbb{A}_{j}^{\operatorname*{ext}})$. 

\begin{proof}
From Lemma \ref{Propequiuwvpmvp} we conclude that the pair $\left(
\mathbf{j},w\right)  $ with $\mathbf{j}:=\mathbb{A}_{j}^{\operatorname*{ext}%
}\nabla_{\operatorname*{pw};j}w$ solves the mixed variational formulation
(\ref{mvp}). We insert this into the second equation of (\ref{mvp}) and test
with functions $q\in L^{2}\left(  \mathbb{R}^{3}\right)  $ which vanish in a
neighborhood of $\Gamma_{j}$. From Lemma \ref{Propequiuwvpmvp}(2) it follows
$w\in H^{1}\left(  \mathbb{R}^{3}\backslash\Gamma_{j},\mathbb{A}%
_{j}^{\operatorname*{ext}}\right)  $ and $w$ satisfies (\ref{DlpProp1}). Again
from Lemma \ref{Propequiuwvpmvp} it follows $\mathbf{j}\in\mathbf{H}\left(
\mathbb{R}^{3},\operatorname*{div}\right)  $ so that $\left[  \left\langle
\mathbf{j},\mathbf{n}_{j}\right\rangle \right]  _{\operatorname*{D};j}=0$. We conclude
$\left[  \left(  \mathsf{D}_{j}\left(  s\right)  \psi\right)
\right]  _{\operatorname*{N};j}^{\operatorname*{ext}}\left(  s\right)  =0$.
Finally, we insert $\mathbf{j}$ into the first equation and substitute
$u\leftarrow w$. Integrating by parts over $\Omega_{j}^{-}$ and $\Omega
_{j}^{+}$ leads to%
\[
\left\langle \left[  w\right]  _{\operatorname*{D};j}\left(  s\right)  ,%
\gamma_{\mathbf{n};j}\left(  s\right)  \overline{\mathbf{m}}%
\right\rangle _{\mathbb{R}^{3}}=\left\langle \psi,%
\gamma_{\mathbf{n};j}\left(  s\right)  \overline{\mathbf{m}}%
\right\rangle _{\Gamma_{j}}\quad\forall\mathbf{m}\in\mathbf{H}\left(
\mathbb{R}^{3},\operatorname*{div}\right)  .
\]
Since the mapping $%
\gamma_{\mathbf{n};j}:\mathbf{H}\left(  \mathbb{R}^{3},\operatorname*{div}\right)  \rightarrow
H^{-1/2}\left(  \Gamma_{j}\right)  $ is surjective (see, e.g., \cite[Cor.
2.8]{Girault86}) it follows $\left[  \left(  \mathsf{D}_{j}\left(  s\right)
\psi\right)  \right]  _{\operatorname*{D};j}\left(  s\right)  =\psi$.
\end{proof}

\subsubsection{Layer potential representation formula\label{rep}}

The key observation for the transformation of our transmission problem to a
non-local skeleton equation is the fact that solutions of the homogeneous PDE
can be expressed by Green's representation formula via their Cauchy data by
means of layer potentials. We start with some preliminaries. For $\varphi\in
H^{-1/2}\left(  \Gamma_{j}\right)  $ and $\psi\in H^{1/2}\left(  \Gamma
_{j}\right)  $ we define the potential
\begin{equation}
w:=\mathsf{D}_{j}\left(  s\right)  \psi-\mathsf{S}_{j}\left(  s\right)
\varphi. \label{wpotential}%
\end{equation}
From Lemmas \ref{LemMapPropSLP} and \ref{LemMapPropDLP} we conclude that $w\in
H^{1}\left(  \mathbb{R}^{3}\backslash\Gamma_{j},\mathbb{A}_{j}%
^{\operatorname*{ext}}\right)  $ and satisfies%
\begin{equation}%
\begin{array}
[c]{cc}%
-\operatorname*{div}\left(  \mathbb{A}_{j}^{\operatorname*{ext}}\nabla
w\right)  +s^{2}p_{j}^{\operatorname*{ext}}w=0 & \text{in }\mathbb{R}%
^{3}\backslash\Gamma_{j},\\
\left[  w\right]  _{\operatorname*{D};j}\left(  s\right)  =\psi\quad
\text{and\quad}\left[  w\right]  _{\operatorname*{N};j}^{\operatorname*{ext}%
}\left(  s\right)  =\varphi. &
\end{array}
\label{tpr3}%
\end{equation}

\begin{proposition}
\label{PropUniqueTransProb}The transmission problem: \textquotedblleft for
given $\varphi\in H^{-1/2}\left(  \Gamma_{j}\right)  $ and $\psi\in
H^{1/2}\left(  \Gamma_{j}\right)  $, find $w\in H^{1}\left(  \mathbb{R}%
^{3}\backslash\Gamma_{j},\mathbb{A}_{j}^{\operatorname*{ext}}\right)  $ such
that (\ref{tpr3}) holds\textquotedblright\ is well posed and the unique
solution is given by $w$ in (\ref{wpotential}).
\end{proposition}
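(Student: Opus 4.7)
The plan has two parts: verify that $w$ defined by (\ref{wpotential}) really solves the transmission problem (existence), and then show uniqueness via the coercivity of $\ell_j(s)$ on $H^1(\mathbb{R}^3)$.

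For existence, I would simply assemble the properties already established in Lemma \ref{LemMapPropSLP} and Lemma \ref{LemMapPropDLP}. Both $\mathsf{S}_j(s)\varphi$ and $\mathsf{D}_j(s)\psi$ lie in $H^1(\mathbb{R}^3\setminus\Gamma_j,\mathbb{A}_j^{\operatorname{ext}})$, so the difference $w$ does as well; both satisfy $\mathsf{L}_j^\sigma(s)=0$ on $\Omega_j^\sigma$ for $\sigma\in\{+,-\}$, so $w$ solves the homogeneous PDE on $\mathbb{R}^3\setminus\Gamma_j$. The jump relations (\ref{jumprelSLP}) and (\ref{jumprelDLP}) combine linearly to give $[w]_{\operatorname{D};j}(s)=\psi-0=\psi$ and $[w]_{\operatorname{N};j}^{\operatorname{ext}}(s)=0-(-\varphi)=\varphi$, so (\ref{tpr3}) is satisfied.

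For uniqueness, let $w_1,w_2$ be two solutions and set $v:=w_1-w_2$. Then $v\in H^1(\mathbb{R}^3\setminus\Gamma_j,\mathbb{A}_j^{\operatorname{ext}})$ solves the homogeneous PDE in each $\Omega_j^\sigma$ and satisfies $[v]_{\operatorname{D};j}(s)=0$ and $[v]_{\operatorname{N};j}^{\operatorname{ext}}(s)=0$. The vanishing of the Dirichlet jump (combined with the standard characterization of $H^1(\mathbb{R}^3)$ as functions in $H^1(\mathbb{R}^3\setminus\Gamma_j)$ with matching traces, cf.\ \cite[Lem.~2.5]{Mikhailov_traces_1}) upgrades $v$ to an element of $H^1(\mathbb{R}^3)$. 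I can then apply Green's identity (\ref{ljjumprel}) from the preceding lemma to conclude
\[
\ell_j(s)(v,w) \;=\; \bilinear{-[v]_{\operatorname{N};j}^{\operatorname{ext}}(s)}{\gamma_{\operatorname{D};j}(s)\overline{w}}{\Gamma_j} \;=\; 0\qquad\forall w\in H^1(\mathbb{R}^3).
\]
Testing with $w=\mu v$ for $\mu=s/|s|$ and invoking the coercivity bound of Lemma \ref{LemLaxMilgram} yields $\|v\|_{H^1(\mathbb{R}^3);s}=0$, hence $v=0$.

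The only subtlety, and the closest thing to an obstacle, is the upgrade from $H^1(\mathbb{R}^3\setminus\Gamma_j)$ to $H^1(\mathbb{R}^3)$: one must be sure that vanishing Dirichlet jump across $\Gamma_j$ is equivalent to global $H^1$-regularity. This is standard (a zero-trace gluing result), but worth citing explicitly. Once that is in place, the argument is essentially a bookkeeping exercise: existence is reduced to the jump/mapping properties already proved, and uniqueness reduces to the full-space Lax--Milgram coercivity established in Lemma \ref{LemLaxMilgram}. Well-posedness then follows, with a stability bound inherited from (\ref{Njest}) and the continuity of the layer potentials.
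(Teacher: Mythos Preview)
Your proposal is correct and follows essentially the same route as the paper: existence via the jump relations of the layer potentials, and uniqueness by observing that the difference of two solutions has zero jumps, hence lies in $H^1(\mathbb{R}^3)$ and satisfies $\ell_j(s)(\cdot,w)=0$ for all $w$, so coercivity from Lemma~\ref{LemLaxMilgram} forces it to vanish. If anything, you are more explicit than the paper about the gluing step (zero Dirichlet jump $\Rightarrow$ global $H^1$-regularity), which the paper uses tacitly when writing $\ell_j(s)(d,v)$.
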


%

\begin{proof}
Existence follows since the potential $w$ in (\ref{wpotential}) defines a
solution. For uniqueness, we assume that there are two solutions $w_{1},w_{2}$
so that the difference $d=w_{1}-w_{2}$ satisfies%
\begin{align*}
-\operatorname*{div}\left(  \mathbb{A}_{j}^{\operatorname*{ext}}\nabla
d\right)  +s^{2}p_{j}^{\operatorname*{ext}}d  &  =0\quad\text{in }%
\mathbb{R}^{3}\backslash\Gamma_{j},\\
\left[  d\right]  _{\operatorname*{D};j}\left(  s\right)   &  =0\quad
\text{and\quad}\left[  d\right]  _{\operatorname*{N};j}^{\operatorname*{ext}%
}\left(  s\right)  =0.
\end{align*}
We multiply the first equation by test functions $v\in H^{1}\left(
\mathbb{R}^{3}\right)  $ and integrate by parts over $\Omega_{j}^{-}$ and
$\Omega_{j}^{+}$. After inserting the transmission conditions we get%
\[
\ell_{j}\left(  s\right)  \left(  d,v\right)  =0\quad\forall v\in H^{1}\left(
\mathbb{R}^{3}\right)  .
\]
Since $\ell_{j}\left(  s\right)  \left(  \cdot,\cdot\right)  $ is coercive
(cf. Lem. \ref{LemLaxMilgram})) we conclude that $d=0$ holds and uniqueness
follows. Hence, the potential $w$ in (\ref{wpotential}) defines the unique
solution. Since the single and double layer operators are continuous,
well-posedness follows.
\end{proof}

\begin{lemma}
[Green's representation formula]\label{LemGreen}Let Assumption \ref{Acoeff} be
satisfied. Let $u^{-}\in H^{1}\left(  \Omega_{j}^{-},\mathbb{A}_{j}%
^{-}\right)  $ and%
\[%
\begin{array}
[c]{cc}%
\mathsf{L}_{j}^{-}\left(  s\right)  u^{-}=0 & \text{in }\Omega_{j}^{-}.
\end{array}
\]
Then, the \emph{Green's representation formulae} hold%
\begin{subequations}
\label{GRFtot}
\end{subequations}%
\begin{align}
u^{-}  &  =\left.  \left(  \mathsf{S}_{j}\left(  s\right)  \gamma
_{\operatorname*{N};j}^{\operatorname*{ext},-}\left(  s\right)  u^{-}%
-\mathsf{D}_{j}\left(  s\right)  \gamma_{\operatorname*{D};j}^{-}\left(
s\right)  u^{-}\right)  \right\vert _{\Omega_{j}^{-}},\tag{%
\ref{GRFtot}%
a}\label{GRFtota}\\
0  &  =\left.  \left(  \mathsf{S}_{j}\left(  s\right)  \gamma
_{\operatorname*{N};j}^{\operatorname*{ext},-}\left(  s\right)  u^{-}%
-\mathsf{D}_{j}\left(  s\right)  \gamma_{\operatorname*{D};j}^{-}\left(
s\right)  u^{-}\right)  \right\vert _{\Omega_{j}^{+}}. \tag{%
\ref{GRFtot}%
b}\label{GRFtotb}%
\end{align}

\end{lemma}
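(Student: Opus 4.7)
The plan is to apply the unique-solvability result of Proposition~\ref{PropUniqueTransProb} to the zero extension of $u^{-}$ across $\Gamma_{j}$. Concretely, set
\[
\tilde{u}(\mathbf{x}) := \begin{cases} u^{-}(\mathbf{x}) & \mathbf{x} \in \Omega_{j}^{-}, \\ 0 & \mathbf{x} \in \Omega_{j}^{+},\end{cases}
\]
and let $w := \mathsf{S}_{j}(s)\gamma_{\operatorname*{N};j}^{\operatorname*{ext},-}(s)u^{-} - \mathsf{D}_{j}(s)\gamma_{\operatorname*{D};j}^{-}(s)u^{-}$. The goal is to show $\tilde{u} = w$; once this is established, restricting to $\Omega_{j}^{-}$ and $\Omega_{j}^{+}$ yields (\ref{GRFtota}) and (\ref{GRFtotb}), respectively.

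First I would verify that $\tilde{u} \in H^{1}(\mathbb{R}^{3}\backslash\Gamma_{j},\mathbb{A}_{j}^{\operatorname*{ext}})$, which is immediate: on $\Omega_{j}^{-}$ it equals $u^{-}\in H^{1}(\Omega_{j}^{-},\mathbb{A}_{j}^{-})$ and on $\Omega_{j}^{+}$ it is identically zero. Next I would check that $\tilde{u}$ satisfies the homogeneous equation subdomain-wise: in $\Omega_{j}^{-}$ this is the hypothesis $\mathsf{L}_{j}^{-}(s)u^{-}=0$ (using $\mathbb{A}_{j}^{-}=\mathbb{A}_{j}^{\operatorname*{ext}}|_{\Omega_{j}^{-}}$, $p_{j}^{-}=p_{j}^{\operatorname*{ext}}|_{\Omega_{j}^{-}}$), and in $\Omega_{j}^{+}$ it is trivial.

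Then I would compute the jumps across $\Gamma_{j}$ directly from Definition~\ref{DefJumpmean}. Since the ``plus'' traces of $\tilde{u}$ vanish,
\[
[\tilde{u}]_{\operatorname*{D};j}(s) = -\gamma_{\operatorname*{D};j}^{-}(s)u^{-}, \qquad [\tilde{u}]_{\operatorname*{N};j}^{\operatorname*{ext}}(s) = -\gamma_{\operatorname*{N};j}^{\operatorname*{ext},-}(s)u^{-}.
\]
Hence $\tilde{u}$ solves the transmission problem~(\ref{tpr3}) with data $\psi = -\gamma_{\operatorname*{D};j}^{-}(s)u^{-}$ and $\varphi = -\gamma_{\operatorname*{N};j}^{\operatorname*{ext},-}(s)u^{-}$.

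By the jump relations (\ref{jumprelSLP}) and (\ref{jumprelDLP}), the same data are produced by the potential $w$; therefore $w$ also solves (\ref{tpr3}) with these data. The uniqueness part of Proposition~\ref{PropUniqueTransProb} (which rests on the coercivity of $\ell_{j}(s)$ established in Lemma~\ref{LemLaxMilgram}) forces $\tilde{u}=w$ in $\mathbb{R}^{3}\backslash\Gamma_{j}$, which is the desired identity. I do not anticipate a serious obstacle; the only point requiring care is bookkeeping of the signs and of the frequency scaling $s^{\pm 1/2}$ in the definitions of jumps and scaled traces, to make sure the data fed into Proposition~\ref{PropUniqueTransProb} match those absorbed by $\mathsf{S}_{j}(s)$ and $\mathsf{D}_{j}(s)$.
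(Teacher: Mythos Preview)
Your proof is correct and follows essentially the same approach as the paper: extend $u^{-}$ by zero to $\Omega_{j}^{+}$, compute the resulting jumps, and invoke Proposition~\ref{PropUniqueTransProb} to identify this extension with the layer-potential representation. The only cosmetic difference is that the paper appeals directly to the existence-and-uniqueness statement of Proposition~\ref{PropUniqueTransProb} (which already asserts that the solution is given by~(\ref{wpotential})), whereas you phrase it as ``both $\tilde{u}$ and $w$ solve the same transmission problem, hence coincide by uniqueness''---logically equivalent.
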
%

\begin{proof}
Define $u\in H^{1}\left(  \mathbb{R}^{3}\backslash\Gamma_{j},\mathbb{A}%
_{j}^{\operatorname*{ext}}\right)  $ by $\left.  u\right\vert _{\Omega_{j}%
^{-}}:=u^{-}$ and $\left.  u\right\vert _{\Omega_{j}^{+}}:=0$. Clearly%
\[%
\begin{array}
[c]{cc}%
-\operatorname*{div}\left(  \mathbb{A}_{j}^{\operatorname*{ext}}\nabla
u\right)  +s^{2}p_{j}^{\operatorname*{ext}}u=0 & \text{in }\mathbb{R}%
^{3}\backslash\Gamma_{j}%
\end{array}
\]
and%
\[%
\begin{array}
[c]{ccc}%
\left[  u\right]  _{\operatorname*{D};j}\left(  s\right)  =-\gamma
_{\operatorname*{D};j}^{-}\left(  s\right)  u^{-}, &  & \left[  u\right]
_{\operatorname*{N};j}^{\operatorname*{ext}}\left(  s\right)  =-\gamma
_{\operatorname*{N};j}^{\operatorname*{ext},-}\left(  s\right)  u^{-}.
\end{array}
\]

From Proposition \ref{PropUniqueTransProb} we deduce that the unique solution
of this transmission problem can be written in the form%
\[
u=\mathsf{S}_{j}\left(  s\right)  \gamma_{\operatorname*{N};j}%
^{\operatorname*{ext},-}\left(  s\right)  u^{-}-\mathsf{D}_{j}\left(
s\right)  \gamma_{\operatorname*{D};j}^{-}\left(  s\right)  u^{-}.
\]
From this and the definition of $u$, the representation (\ref{GRFtot})
follows.
\end{proof}

\section{Calder\'{o}n operators\label{SecCaldOp}}

Green's representation formula from Lemma~\ref{LemGreen} expresses homogeneous
solutions of a linear, second order, elliptic PDE by means of their Cauchy
data on the domain boundary. By applying the Cauchy trace to this formula we
obtain the Calder\'{o}n identity. In this way, Dirichlet and Neumann traces
have to be applied to the single layer and double layer potential which give
rise to non-local boundary integral operators on the subdomain boundaries.

\begin{definition}
\label{DefBIO}Let Assumption \ref{Acoeff} be satisfied. For $1\leq j\leq
n_{\Omega}$, the \emph{single layer boundary integral operator (}%
$\mathsf{V}_{j}\left(  s\right)  $\emph{)}, the \emph{double layer boundary
integral operator (}$\mathsf{K}_{j}\left(  s\right)  $\emph{)}, the \emph{dual
double layer boundary integral operator (}$\mathsf{K}_{j}^{\prime}\left(
s\right)  $\emph{)}, the \emph{hypersingular boundary integral operator
(}$\mathsf{W}_{j}\left(  s\right)  $\emph{)} are given by%
\begin{align*}
\mathsf{V}_{j}\left(  s\right)  & :H^{-1/2}\left(  \Gamma_{j}\right)
\rightarrow H^{1/2}\left(  \Gamma_{j}\right)  , & \mathsf{V}_{j}\left(
s\right)  \varphi & :=\{\!\!\{\mathsf{S}_{j}\left(  s\right)  \varphi
\}\!\!\}_{\operatorname*{D};j}\left(  s\right)  ,\\
\mathsf{K}_{j}\left(  s\right)  & :H^{1/2}\left(  \Gamma_{j}\right)  \rightarrow
H^{1/2}\left(  \Gamma_{j}\right)  , & \mathsf{K}_{j}\left(  s\right)
\psi & :=\{\!\!\{\mathsf{D}_{j}\left(  s\right)  \psi\}\!\!\}_{\operatorname*{D}%
;j}\left(  s\right)  ,\\
\mathsf{K}_{j}^{\prime}\left(  s\right)  & :H^{-1/2}\left(  \Gamma_{j}\right)
\rightarrow H^{-1/2}\left(  \Gamma_{j}\right)  , & \mathsf{K}_{j}^{\prime
}\left(  s\right)  \varphi&:=\{\!\!\{\mathsf{S}_{j}\left(  s\right)
\varphi\}\!\!\}_{\operatorname*{N};j}^{\operatorname*{ext}}\left(  s\right)
,\\
\mathsf{W}_{j}\left(  s\right)  &:H^{1/2}\left(  \Gamma_{j}\right)  \rightarrow
H^{-1/2}\left(  \Gamma_{j}\right)  , & \mathsf{W}_{j}\left(  s\right)
\psi & :=-\{\!\!\{\mathsf{D}_{j}\left(  s\right)  \psi\}\!\!\}_{\operatorname*{N}%
;j}^{\operatorname*{ext}}\left(  s\right)  ,
\end{align*}
for all $\varphi\in H^{-1/2}\left(  \Gamma_{j}\right)  $ and $\psi\in
H^{1/2}\left(  \Gamma_{j}\right)  $.
\end{definition}

In order to define the Calder\'{o}n operator we introduce a bilinear form on
the multi trace spaces (cf. Def. \ref{DefMT}) and set, for $%
\mbox{\boldmath$ \phi$}%
_{j}=\left(  \phi_{\operatorname*{D};j},\phi_{\operatorname*{N};j}\right)
\in\mathbf{X}_{j}$ and $%
\mbox{\boldmath$ \psi$}%
_{j}=\left(  \psi_{\operatorname*{D};j},\psi_{\operatorname*{N};j}\right)
\in\mathbf{X}_{j}$,
\begin{subequations}
\label{0}%
\begin{equation}
\left\langle
\mbox{\boldmath$ \phi$}%
_{j},%
\mbox{\boldmath$ \psi$}%
_{j}\right\rangle _{\mathbf{X}_{j}}:=\left\langle \phi_{\operatorname*{D}%
;j},\psi_{\operatorname*{N};j}\right\rangle _{\Gamma_{j}}+\left\langle
\psi_{\operatorname*{D};j},\phi_{\operatorname*{N};j}\right\rangle
_{\Gamma_{j}}, \label{iXj}%
\end{equation}
where, again, $\left\langle \cdot,\cdot\right\rangle _{\Gamma_{j}}$ designates the
pairing between $H^{1/2}\left(  \Gamma_{j}\right)  $ and $H^{-1/2}\left(
\Gamma_{j}\right)  $. For $\mbox{\boldmath$ \phi$}=\left(
\mbox{\boldmath$ \phi$}_{j}\right)  _{j=1}^{n_{\Omega}}\in\mathbb{X}\left(  \mathcal{P}_{\Omega}\right)  $ and
\mbox{\boldmath$ \psi$}$=\left(\mbox{\boldmath$ \psi$}_{j}\right)  _{j=1}^{n_{\Omega}}\in\mathbb{X}\left(  \mathcal{P}_{\Omega
}\right)  $ we define the bilinear form $\left\langle \cdot,\cdot\right\rangle
:\mathbb{X}\left(  \mathcal{P}_{\Omega}\right)  \times\mathbb{X}\left(
\mathcal{P}_{\Omega}\right)  \rightarrow\mathbb{C}$ by%
\end{subequations}
\begin{equation}
\left\langle\mbox{\boldmath$ \phi$},\mbox{\boldmath$ \psi$}
\right\rangle _{\mathbb{X}}:=\sum_{1\leq j\leq n_{\Omega}}\left\langle
\mbox{\boldmath$ \phi$}_{j},
\mbox{\boldmath$ \psi$}_{j}\right\rangle _{\mathbf{X}_{j}}. \label{bilixfull}%
\end{equation}

\noindent\fbox{\begin{minipage}{0.98\linewidth}
\begin{definition}
\label{DefCaldOp}Let Assumption \ref{Acoeff} be satisfied. The
\emph{\bf Calder\'{o}n operator }$\mathsf{C}\left(  s\right)  :\mathbb{X}\left(
\mathcal{P}_{\Omega}\right)  \rightarrow\mathbb{X}\left(  \mathcal{P}_{\Omega
}\right)  $ is given by%
\[
\mathsf{C}\left(  s\right)  :=\operatorname*{diag}\left[  \mathsf{C}%
_{j}\left(  s\right)  :1\leq j\leq n_{\Omega}\right]  \quad\text{with\quad
}\mathsf{C}_{j}\left(  s\right)  :=\left[
\begin{array}
[c]{ll}%
-\mathsf{K}_{j}\left(  s\right)  & \mathsf{V}_{j}\left(  s\right) \\
\mathsf{W}_{j}\left(  s\right)  & \mathsf{K}_{j}^{\prime}\left(  s\right)
\end{array}
\right]  .
\]
The sesquilinear form $c\left(  s\right)  :\mathbb{X}\left(  \mathcal{P}%
_{\Omega}\right)  \times\mathbb{X}\left(  \mathcal{P}_{\Omega}\right)
\rightarrow\mathbb{C}$ associated to the operator $\mathsf{C}\left(  s\right)
$ is%
\begin{gather}
  \label{eq:COP}
  c\left(  s\right)  \left(\mbox{\boldmath$\phi$},\mbox{\boldmath$\psi$}\right)
  :=\left\langle
    \mathsf{C}\left(  s\right)
    \mbox{\boldmath$\phi$},\overline{\mbox{\boldmath$ \psi$}}
  \right\rangle _{\mathbb{X}}.
\end{gather}
\end{definition}%
\end{minipage}%
}

Let
$\operatorname*{Id}:\mathbb{X}\left( \mathcal{P}_{\Omega}\right)
\rightarrow\mathbb{X}\left( \mathcal{P}_{\Omega}\right) $ denote the identity. An
essential property of the Calder\'{o}n operator is that
{$\left( \frac{1}{2}\operatorname*{Id} + \mathsf{C}\left( s\right) \right) $} is a projector
into the space of Cauchy traces of solutions of the homogeneous PDE (\ref{homPDEGreen1})
as can be seen from the next Lemma. Recall the definition of the one-sided Cauchy trace
$\mbox{\boldmath$ \gamma$}_{\operatorname*{C};j}^{\operatorname*{ext},-}\left( s\right) $
from (\ref{DefCT}) and (\ref{gammadjs}).

\begin{lemma}
\label{LemCaldProj}Let Assumption \ref{Acoeff} be satisfied. Let $u^{-}\in
H^{1}\left(  \Omega_{j}^{-},\mathbb{A}_{j}^{-}\right)  $ and%
\[%
\begin{array}
[c]{cc}%
\mathsf{L}_{j}^{-}\left(  s\right)  u^{-}=0 & \text{in }\Omega_{j}^{-}.
\end{array}
\]
Then, for any $j\in\left\{  1,2,\ldots,n_{\Omega}\right\}  $ it holds%
\begin{equation}
\left(  \mathsf{C}_{j}\left(  s\right)  -\frac{1}{2}\operatorname*{Id}%
\nolimits_{j}\right)
\mbox{\boldmath$ \gamma$}%
_{\operatorname*{C};j}^{\operatorname*{ext},-}\left(  s\right)  u^{-}=0,
\label{CaldProj}%
\end{equation}
where $\operatorname*{Id}_{j}:\mathbf{X}_{j}\rightarrow\mathbf{X}_{j}$ is the
identity in $\mathbf{X}_{j}$.
\end{lemma}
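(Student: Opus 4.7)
The plan is to invoke Green's representation formula from Lemma~\ref{LemGreen} and then read off the Cauchy trace of the representing potential componentwise, using the jump relations for $\mathsf{S}_j(s)$ and $\mathsf{D}_j(s)$ together with the fact that the potential vanishes on $\Omega_j^+$.

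Concretely, I would set $\psi:=\gamma_{\operatorname*{D};j}^{-}(s)u^{-}$ and $\varphi:=\gamma_{\operatorname*{N};j}^{\operatorname*{ext},-}(s)u^{-}$ and consider the potential $w:=\mathsf{S}_j(s)\varphi-\mathsf{D}_j(s)\psi$. By Lemma~\ref{LemGreen} we have $w|_{\Omega_j^-}=u^-$ and $w|_{\Omega_j^+}=0$. Consequently the one-sided traces of $w$ from $\Omega_j^+$ vanish, and the one-sided traces from $\Omega_j^-$ equal $\psi$ and $\varphi$ respectively. Plugging these into the definitions (\ref{meandef}) of the means yields $\{\!\!\{w\}\!\!\}_{\operatorname*{D};j}(s)=\tfrac{1}{2}\psi$ and $\{\!\!\{w\}\!\!\}_{\operatorname*{N};j}^{\operatorname*{ext}}(s)=\tfrac{1}{2}\varphi$.

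On the other hand, by linearity of the means and the definitions of $\mathsf{V}_j(s)$, $\mathsf{K}_j(s)$, $\mathsf{K}_j'(s)$, $\mathsf{W}_j(s)$ in Definition~\ref{DefBIO}, the same means expand as
\begin{align*}
\{\!\!\{w\}\!\!\}_{\operatorname*{D};j}(s) &= \mathsf{V}_j(s)\varphi - \mathsf{K}_j(s)\psi,\\
\{\!\!\{w\}\!\!\}_{\operatorname*{N};j}^{\operatorname*{ext}}(s) &= \mathsf{K}_j'(s)\varphi + \mathsf{W}_j(s)\psi.
\end{align*}
Equating the two evaluations of each mean gives exactly the two components of the identity $\mathsf{C}_j(s)\boldsymbol{\gamma}_{\operatorname*{C};j}^{\operatorname*{ext},-}(s)u^{-}=\tfrac{1}{2}\boldsymbol{\gamma}_{\operatorname*{C};j}^{\operatorname*{ext},-}(s)u^{-}$, which is the claim (\ref{CaldProj}).

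There is no real obstacle here, provided one is careful with signs and with the scaling factors $s^{\pm 1/2}$ in the frequency-scaled traces, jumps, and means: the key check is that the $s$-scalings in the definitions of the means, the scaled Cauchy traces, and the scaled boundary integral operators are mutually consistent so that the factor $\tfrac{1}{2}$ appears cleanly in both rows. Once that bookkeeping is verified, the lemma is an immediate corollary of Lemma~\ref{LemGreen} combined with the jump relations (\ref{jumprelSLP}) and (\ref{jumprelDLP}).
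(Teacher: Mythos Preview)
Your proof is correct and follows essentially the same approach as the paper: apply Green's representation formula (Lemma~\ref{LemGreen}) to obtain $w=\mathsf{S}_j(s)\varphi-\mathsf{D}_j(s)\psi$ with $w|_{\Omega_j^-}=u^-$ and $w|_{\Omega_j^+}=0$, then take averages of the one-sided traces to recover the Calder\'on identity. The paper spells this out by writing four separate trace equations and adding them in pairs, whereas you compute the means directly; the content is the same. One small remark: the jump relations (\ref{jumprelSLP}) and (\ref{jumprelDLP}) that you mention at the end are not actually needed in your argument---the vanishing of $w$ on $\Omega_j^+$ together with the definition of the boundary integral operators as means already suffices.
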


%

\begin{proof}
Green's representation formula (\ref{GRFtota}) gives us%
\begin{align*}
\gamma_{\operatorname*{D};j}^{-}\left(  s\right)  u^{-} &  =\gamma
_{\operatorname*{D};j}^{-}\left(  s\right)  \mathsf{S}_{j}\left(  s\right)
\gamma_{\operatorname*{N};j}^{\operatorname*{ext},-}\left(  s\right)
u^{-}-\gamma_{\operatorname*{D};j}^{-}\left(  s\right)  \mathsf{D}_{j}\left(
s\right)  \gamma_{\operatorname*{D};j}^{-}\left(  s\right)  u^{-},\\
0 &  =\gamma_{\operatorname*{D};j}^{+}\left(  s\right)  \mathsf{S}_{j}\left(
s\right)  \gamma_{\operatorname*{N};j}^{\operatorname*{ext},-}\left(
s\right)  u^{-}-\gamma_{\operatorname*{D};j}^{+}\left(  s\right)
\mathsf{D}_{j}\left(  s\right)  \gamma_{\operatorname*{D};j}^{-}\left(
s\right)  u^{-},\\
\gamma_{\operatorname*{N};j}^{\operatorname*{ext},-}\left(  s\right)  u^{-} &
=\gamma_{\operatorname*{N};j}^{\operatorname*{ext},-}\left(  s\right)
\mathsf{S}_{j}\left(  s\right)  \gamma_{\operatorname*{N};j}%
^{\operatorname*{ext},-}\left(  s\right)  u^{-}-\gamma_{\operatorname*{N}%
;j}^{\operatorname*{ext},-}\left(  s\right)  \mathsf{D}_{j}\left(  s\right)
\gamma_{\operatorname*{D};j}^{-}\left(  s\right)  u^{-},\\
0 &  =-\gamma_{\operatorname*{N};j}^{\operatorname*{ext},+}\left(  s\right)
\mathsf{S}_{j}\left(  s\right)  \gamma_{\operatorname*{N};j}%
^{\operatorname*{ext},-}\left(  s\right)  u^{-}+\gamma_{\operatorname*{N}%
;j}^{\operatorname*{ext},+}\left(  s\right)  \mathsf{D}_{j}\left(  s\right)
\gamma_{\operatorname*{D};j}^{-}\left(  s\right)  u^{-}.
\end{align*}
We multiply the first two relations by $1/2$ and add them and do the same with
the last two relations. This yields%
\begin{align*}
\frac{1}{2}\gamma_{\operatorname*{D};j}^{-}\left(  s\right)  u^{-} &
=\mathsf{V}_{j}\left(  s\right)  \gamma_{\operatorname*{N};j}%
^{\operatorname*{ext},-}\left(  s\right)  u^{-}-\mathsf{K}_{j}\left(
s\right)  \gamma_{\operatorname*{D};j}^{-}\left(  s\right)  u^{-},\\
\frac{1}{2}\gamma_{\operatorname*{N};j}^{\operatorname*{ext},-}\left(
s\right)  u^{-} &  =\mathsf{K}_{j}^{\prime}\left(  s\right)  \gamma
_{\operatorname*{N};j}^{\operatorname*{ext},-}\left(  s\right)  u^{-}%
+\mathsf{W}_{j}\left(  s\right)  \gamma_{\operatorname*{D};j}^{-}\left(
s\right)  u^{-}%
\end{align*}
and after a reordering of the terms (\ref{CaldProj}) follows.
\end{proof}

\section{Single-trace formulation of the transmission
problem\label{SecMultiSingleTraceForm}}

In this section, we formulate the transmission problem (\ref{generalTP}) as a
non-local skeleton equation for the Cauchy data of the solution. We start from
a transmission problem with given jump data: We seek
\[
\mathbf{u}^{\operatorname*{mult}}=\left(  \mathbf{u}_{j}^{\operatorname*{mult}%
}\right)  _{j=1}^{n_{\Omega}}=\left(  \left(  u_{\operatorname*{D}%
;j}^{\operatorname*{mult}},u_{\operatorname*{N};j}^{\operatorname*{mult}%
}\right)  \right)  _{j=1}^{n_{\Omega}}\in\mathbb{X}\left(  \mathcal{P}%
_{\Omega}\right)
\]
as the solution of%
\begin{gather}
  \label{eq:5}
\begin{array}
[c]{cl}%
\left(  \mathsf{C}_{j}\left(  s\right)  -\frac{1}{2}\operatorname*{Id}%
  \nolimits_{j}\right)  \mathbf{u}_{j}^{\operatorname*{mult}}=0, &
1\leq j\leq n_{\Omega},\\%
\left[  \mathbf{u}^{\operatorname*{mult}}\right]  _{j,k}\left(  s\right)
=\left[\mbox{\boldmath$ \beta$}\right]  _{j,k}, & 1\leq j,k\leq n_{\Omega},\\
s^{1/2}\left.  u_{\operatorname*{D};j}^{\operatorname*{mult}}\right\vert
_{\Gamma_{j}\cap\Gamma_{\operatorname*{D}}}=\left.  \beta_{\operatorname*{D}%
  ;j}\right\vert _{\Gamma_{j}\cap\Gamma_{\operatorname*{D}}}& 1\leq j\leq
n_{\Omega}\\
s^{-1/2}\left.  u_{\operatorname*{N};j}^{\operatorname*{mult}}\right\vert
_{\Gamma_{j}\cap\Gamma_{\operatorname*{N}}}=\left.  \beta_{\operatorname*{N}%
    ;j}\right\vert _{\Gamma_{j}\cap\Gamma_{\operatorname*{N}}}, & 1\leq j\leq
n_{\Omega}%
\end{array}
\end{gather}
with $\mbox{\boldmath$ \beta$}$ as in (\ref{generalTP}). Note that
$\mathbf{u}^{\operatorname*{mult}}$ is multi-valued on the inner skeleton
$\Sigma\cap\Omega$.
Following \rhc{\cite[Section~3]{ClHiptJH}}, a \emph{single trace formulation} and
single-valued functions is obtained when the transmission conditions are incorporated into
the multi trace space $\mathbb{X}\left( \mathcal{P}_{\Omega}\right) $. We define the free
single trace space
$\mathbb{X}^{\operatorname{single}}\left( \mathcal{P}_{\Omega }\right) $ and the single
trace space with incorporated homogeneous boundary conditions by%
\begin{align}
\mathbb{X}^{\operatorname{single}}\left(  \mathcal{P}_{\Omega}\right)   &
:=\left\{  \left(  \left(  \psi_{\operatorname*{D};j},\psi_{\operatorname*{N}%
;j}\right)  \right)  _{j=1}^{n_{\Omega}}\in\mathbb{X}\left(  \mathcal{P}%
_{\Omega}\right)  \mid\left\{
\begin{array}[c]{ll}%
\left.
\begin{array}
[c]{l}%
\exists v\in H^{1}\left(  \Omega\right) \\
\text{s.t. }\forall1\leq j\leq n_{\Omega}%
\end{array}
\right\}  : & \psi_{\operatorname*{D};j}=\gamma_{\operatorname*{D};j}v\\
\left.
\begin{array}
[c]{l}%
\exists\mathbf{w}\in\mathbf{H}\left(  \Omega,\operatorname*{div}\right) \\
\text{s.t. }\forall1\leq j\leq n_{\Omega}%
\end{array}
\right\}  : & \psi_{\operatorname*{N};j}=\gamma_{\mathbf{n};j}\mathbf{w}%
\end{array}
\right.
\right\}  ,
\label{defXsingle}\\%
\mathbb{X}_{0}^{\operatorname{single}}\left(  \mathcal{P}_{\Omega}\right)   &
:=\Bigl\{
\begin{aligned}[t]
  & \left( \left( \psi_{\operatorname*{D};j},\psi_{\operatorname*{N}%
        ;j}\right) \right) _{j=1}^{n_{\Omega}}\in\mathbb{X}^{\operatorname{single}%
  }\left( \mathcal{P}_{\Omega}\right) \mid\forall1\leq j\leq n_{\Omega }: \\ &
  \left.
    \psi_{\operatorname*{D};j}\right\vert _{\Gamma_{j}\cap
    \Gamma_{\operatorname*{D}}}=0\;\wedge\text{\ }\left.  \psi_{\operatorname*{N}%
      ;j}\right\vert _{\Gamma_{j}\cap\Gamma_{\operatorname*{N}}}=0 \Bigr\}.
\end{aligned}
\nonumber
\end{align}
{We set $\mathbf{u}%
^{\operatorname{single}}:=\left(  \mathbf{u}_{j}^{\operatorname*{mult}}-%
\mbox{\boldmath$ \beta$}%
\left(  s\right)  \right)  _{j=1}^{n_{\Omega}}$ for $%
\mbox{\boldmath$ \beta$}%
\left(  s\right)  :=\left(  \left(  s^{-1/2}\beta_{\operatorname*{D}%
,j},s^{1/2}\beta_{\operatorname*{N},j}\right)  \right)  _{j=1}^{n_{\Omega}}$}
and observe that $\mathbf{u}^{\operatorname{single}}$ satisfies
\begin{gather}
  \label{eq:6}
\begin{array}
[c]{ll}%
\left(  \mathsf{C}_{j}\left(  s\right)  -\frac{1}{2}\operatorname*{Id}%
\nolimits_{j}\right)  \mathbf{u}_{j}^{\operatorname{single}}=-\left(
\mathsf{C}_{j}\left(  s\right)  -\frac{1}{2}\operatorname*{Id}\nolimits_{j}%
\right)\mbox{\boldmath$ \beta$}_{j}  & \text{on }\Gamma_{j},\quad1\leq j\leq n_{\Omega},\\
\left[  \mathbf{u}^{\operatorname{single}}\right]  _{j,k}\left(  s\right)
=\mathbf{0}, & 1\leq j,k\leq n_{\Omega},\\
\left.  u_{\operatorname*{D};j}^{\operatorname{single}}\right\vert
_{\Gamma_{j}\cap\Gamma_{\operatorname*{D}}}=0\quad\text{and}\quad \left.
u_{\operatorname*{N};j}^{\operatorname{single}}\right\vert _{\Gamma_{j}%
\cap\Gamma_{\operatorname*{N}}}=0, & 1\leq j\leq n_{\Omega}.
\end{array}
\end{gather}
This implies that
$\mathbf{u}^{\operatorname{single}}\in\mathbb{X}_{0}^{\operatorname{single}}\left(
  \mathcal{P}_{\Omega}\right) $.

A reversed perspective on this derivation of the skeleton equation in the
single trace space from the original transmission problem (\ref{generalTP}) is
as follows: One solves the non-local skeleton problem in the single trace
space (in variational form):

\noindent\fbox{\begin{minipage}{0.98\linewidth}
    With $c(s)$ from \eqref{eq:COP} find $\mathbf{u}^{\operatorname{single}}\in\mathbb{X}_{0}%
^{\operatorname{single}}\left(  \mathcal{P}_{\Omega}\right)$ such that 
\begin{equation}%
c\left(  s\right)  \left(  \mathbf{u}^{\operatorname{single}},\mbox{\boldmath$ \psi$}
\right)  -\frac{1}{2}\left\langle \mathbf{u}^{\operatorname{single}},\overline{
\mbox{\boldmath$ \psi$}}\right\rangle _{\mathbb{X}} =-\left(  c\left(  s\right)  \left(\mbox{\boldmath$ \beta$}
\left(  s\right)  ,\mbox{\boldmath$ \psi$}\right)  -\frac{1}{2}\left\langle
\mbox{\boldmath$ \beta$}
\left(  s\right)  ,\overline{\mbox{\boldmath$ \psi$}}\right\rangle _{\mathbb{X}}\right)   
\label{singskeleq}%
\end{equation}
for all $\mbox{\boldmath$ \psi$}\in\mathbb{X}_{0}^{\operatorname{single}}\left(
  \mathcal{P}_{\Omega}\right)$.
\end{minipage}
}

We obtain $\mathbf{u}_{j}^{\operatorname*{mult}}:=\mathbf{u}^{\operatorname{single}}+
\mbox{\boldmath$ \beta$}\left(  s\right)  $. Then, we use Green's representation formula
\[
u_{j}:=\left.  \left(  \mathsf{S}_{j}\left(  s\right)  u_{\operatorname*{N}%
;j}^{\operatorname*{mult}}-\mathsf{D}_{j}\left(  s\right)
u_{\operatorname*{D};j}^{\operatorname*{mult}}\right)  \right\vert
_{\Omega_{j}^{-}},\quad1\leq j\leq n_{\Omega}.
\]
Finally, the function $\mathbf{u}=\left(  u_{j}\right)  _{j=1}^{n_{\Omega}}%
\in\mathbf{H}\left(  \Omega,\mathbb{A}\right)  $ solves the original
transmission problem (\ref{generalTP}).

Next, we prove the well-posedness of (\ref{singskeleq}). The essential point is to prove
$s$-explicit continuity estimates for the layer potentials and the boundary integral
operators as well as coercivity results for $\mathsf{V}%
\left( s\right) $, $\mathsf{W}\left( s\right) $, and
$\mathsf{C}\left( s\right) -\frac{1}{2}\operatorname*{Id}$.

We start with an estimate of the Dirichlet and Neumann trace of homogeneous
solutions of the acoustic PDE.

\begin{lemma}
\label{LemScaledTraces}Let Assumption \ref{Acoeff} be satisfied and set
$\mathbb{A}_{j}^{\sigma}:=\left.  \mathbb{A}_{j}^{\operatorname*{ext}%
}\right\vert _{\Omega_{j}^{\sigma}}$, $\sigma\in\left\{  +,-\right\}  $. Then there are
constants $C_{D},C>0$ independent of $s$ such that 
\begin{gather}
\left\Vert \gamma_{\operatorname*{D};j}^{\sigma}\left(  s\right)  v\right\Vert
_{H^{1/2}\left(  \Gamma_{j}\right)  }\leq C_{\operatorname*{D}}\left\vert
s\right\vert ^{1/2}\left\Vert v\right\Vert _{H^{1}\left(  \Omega_{j}^{\sigma
}\right)  }\leq C\left\vert s\right\vert ^{1/2}\left\Vert v\right\Vert
_{H^{1}\left(  \Omega_{j}^{\sigma}\right)  ;s}\quad\forall v\in H^{1}\left(
\Omega_{j}^{\sigma}\right)  . \label{scaledtraceest}%
\end{gather}
Vice versa, there exists $C>0$ independent of $s$ and a linear bounded extension operator
$\mathsf{E}%
_{j}\left( s\right) :H^{1/2}\left( \Gamma_{j}\right) \rightarrow H^{1}\left(
  \mathbb{R}^{3}\right) $ which satisfies for all
$\varphi\in H^{1/2}\left( \Gamma_{j}\right) :$%
\begin{equation}
\gamma_{\operatorname*{D},j}\left(  s\right)  \mathsf{E}_{j}\left(  s\right)
\varphi=\varphi\quad\text{and\quad}\left\Vert \mathsf{E}_{j}\left(  s\right)
\varphi\right\Vert _{H^{1}\left(  \mathbb{R}^{3}\right)  ;s}\leq C\left\Vert
\varphi\right\Vert _{H^{1/2}\left(  \Gamma_{j}\right)  }. \label{tracelifting}%
\end{equation}

Let $v\in H^{1}\left(  \mathbb{R}^{3}\right)  $ such that $v^{\sigma}:=\left.
v\right\vert _{\Omega_{j}^{\sigma}}$ belongs to $H^{1}\left(  \Omega
_{j}^{\sigma},\mathbb{A}_{j}^{\sigma}\right)  $ and
\[%
\begin{array}
[c]{cc}%
-\operatorname*{div}\left(  \mathbb{A}_{j}^{\operatorname*{ext}}\nabla
v\right)  +s^{2}p_{j}^{\operatorname*{ext}}v=0 & \text{in }\mathbb{R}%
^{3}\backslash\Gamma_{j}.
\end{array}
\]
Then,%
\begin{equation}
\left\Vert \gamma_{\operatorname*{N};j}^{\operatorname*{ext},\sigma}\left(
s\right)  v^{\sigma}\right\Vert _{H^{-1/2}\left(  \Gamma_{j}\right)  }\leq
C\Lambda_{j}\left\Vert v^{\sigma}\right\Vert _{H^{1}\left(  \Omega_{j}%
^{\sigma}\right)  ;s}, \label{conormaltraceest}%
\end{equation}
where $\Lambda_{j}$ is as in Lem. \ref{LemLaxMilgram} and $C$ depends only on
the domain $\Omega_{j}^{\sigma}$.
\end{lemma}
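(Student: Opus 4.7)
The plan is to dispatch the three claims in order, relying on the unscaled trace inequalities from Proposition~\ref{Proptrace} together with careful bookkeeping of the frequency scalings $\gamma_{\operatorname{D};j}^\sigma(s)=s^{1/2}\gamma_{\operatorname{D};j}^\sigma$ and $\gamma_{\operatorname{N};j}^{\operatorname{ext},\sigma}(s)=s^{-1/2}\gamma_{\operatorname{N};j}^{\operatorname{ext},\sigma}$. The first bound~(\ref{scaledtraceest}) is immediate: from~(\ref{traceest}) we obtain $\|\gamma_{\operatorname{D};j}^\sigma(s)v\|_{H^{1/2}(\Gamma_j)}=|s|^{1/2}\|\gamma_{\operatorname{D};j}^\sigma v\|_{H^{1/2}(\Gamma_j)}\leq C_{\operatorname{D}}\,|s|^{1/2}\,\|v\|_{H^1(\Omega_j^\sigma)}$, and the second inequality follows from $\|v\|_{H^1(\Omega_j^\sigma)}\leq\max\{1,s_0^{-1}\}\|v\|_{H^1(\Omega_j^\sigma);s}$, which is a direct consequence of $|s|\geq s_0$.

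The main technical obstacle is producing the extension $\mathsf{E}_j(s)$ of~(\ref{tracelifting}) whose operator norm from $H^{1/2}(\Gamma_j)$ into $(H^1(\mathbb{R}^3),\|\cdot\|_{H^1(\mathbb{R}^3);s})$ is bounded \emph{uniformly} in~$s$. A naive rescaling of a classical extension $E\colon H^{1/2}(\Gamma_j)\rightarrow H^1(\mathbb{R}^3)$ such as $\mathsf{E}_j(s)\varphi:=s^{-1/2}E\varphi$ fails because $\|E\varphi\|_{H^1(\mathbb{R}^3);s}$ grows like $|s|$. The cure is to use an extension that decays away from $\Gamma_j$ on the natural length scale $|s|^{-1}$. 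I would first construct it on the half-space via the Fourier multiplier $\hat u(\xi,x_3)=s^{-1/2}\,e^{-x_3\sqrt{|\xi|^2+|s|^2}}\,\hat\varphi(\xi)$ (taking partial Fourier transform in the tangential variables); a direct Plancherel computation yields $\|u\|_{H^1(\mathbb{R}^3_+);s}^2=\int_{\mathbb{R}^2}|s|^{-1}\sqrt{|\xi|^2+|s|^2}\,|\hat\varphi(\xi)|^2\,d\xi\leq C\|\varphi\|_{H^{1/2}}^2$, the last inequality holding with $C$ depending only on~$s_0$ since $\sqrt{|\xi|^2+|s|^2}/|s|\leq\max\{1,s_0^{-1}\}\sqrt{1+|\xi|^2}$. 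For a Lipschitz boundary $\Gamma_j$ this local construction is transported by Lipschitz charts and glued via a partition of unity, following the standard procedure used in \cite[Prop.~16]{LaSa2009} and \cite[Lem.~5.2]{Barton_potential}; the bound is preserved because the charts flatten $\Gamma_j$ by bi-Lipschitz maps whose constants depend only on the geometry, not on~$s$.

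With $\mathsf{E}_j(s)$ in hand, the co-normal trace bound~(\ref{conormaltraceest}) follows by duality through Green's first identity~(\ref{Green1}). For arbitrary $\phi\in H^{1/2}(\Gamma_j)$, set $\bar w:=\left.\mathsf{E}_j(s)\phi\right|_{\Omega_j^\sigma}\in H^1(\Omega_j^\sigma)$, so that $\gamma_{\operatorname{D};j}^\sigma(s)\bar w=\phi$ and $\|\bar w\|_{H^1(\Omega_j^\sigma);s}\leq C\|\phi\|_{H^{1/2}(\Gamma_j)}$. Substituting $w$ into~(\ref{Green1}), using the spectral bound $\Lambda_j$ together with a vector Cauchy--Schwarz inequality, yields
\[
\left|\langle\gamma_{\operatorname{N};j}^{\operatorname{ext},\sigma}(s)v^\sigma,\phi\rangle_{\Gamma_j}\right|\leq\Lambda_j\,\|v^\sigma\|_{H^1(\Omega_j^\sigma);s}\,\|\bar w\|_{H^1(\Omega_j^\sigma);s}\leq C\,\Lambda_j\,\|v^\sigma\|_{H^1(\Omega_j^\sigma);s}\,\|\phi\|_{H^{1/2}(\Gamma_j)},
\]
and the supremum over $\phi$ gives the claim. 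Once the $|s|^{1/2}$ weights in the extension are calibrated correctly, the remainder of the argument reduces to Green's identity and Cauchy--Schwarz.
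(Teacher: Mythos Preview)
Your proof is correct, and your treatments of~\eqref{scaledtraceest} and~\eqref{conormaltraceest} match the paper's almost verbatim (the paper tracks the complex-conjugation in Green's identity a bit more explicitly, carrying the unimodular factor $(\overline{s}/s)^{1/2}$ that arises when relating $\overline{\psi}$ to $\gamma_{\operatorname{D};j}^{\sigma}(s)\overline{w}$, but this does not affect the modulus). The only genuine difference is the construction of the lifting $\mathsf{E}_j(s)$: where you build it by hand in the half-space via the Fourier multiplier $s^{-1/2}e^{-x_3\sqrt{|\xi|^2+|s|^2}}$ and then transfer to $\Gamma_j$ through Lipschitz charts and a partition of unity, the paper instead defines $\mathsf{E}_j(s)\varphi$ intrinsically on each $\Omega_j^\sigma$ as the weak solution of the Dirichlet problem $-\Delta u+|s|^2u=0$ with trace $s^{-1/2}\varphi$, and cites \cite[Prop.~2.5.1]{sayas_book} for the uniform $\|\cdot\|_{H^1;s}$-bound. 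The two constructions are really the same animal---on the half-space your formula \emph{is} the explicit solution of the paper's Dirichlet problem---so the mathematical content is identical; the paper's variational definition sidesteps the localisation machinery at the price of an external citation, while your approach makes the $s$-uniform bound directly verifiable by a Plancherel computation.
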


%

\begin{proof}
The estimates in (\ref{scaledtraceest}) follow from the scaling of
$\gamma_{\operatorname*{D};j}^{\sigma}\left(  s\right)  $ with respect to $s$
and (\ref{traceest}).

The extension operator $\mathsf{E}_{j}\left(  s\right)  :H^{1/2}\left(
\Gamma_{j}\right)  \rightarrow H^{1}\left(  \mathbb{R}^{3}\right)  $ is
defined for $\varphi\in H^{1/2}\left(  \Gamma_{j}\right)  $ piecewise in
$\Omega_{j}^{\sigma}$, $\sigma\in\left\{  +,-\right\}  $, by%
\begin{align*}
  \gamma_{\operatorname*{D};j}^{\sigma}\left(  s\right)  \left(
\mathsf{E}_{j}\left(  s\right)  \varphi\right)  & =\varphi\quad\text{and}\\
  \left(  \nabla\left(  \mathsf{E}_{j}\left(  s\right)  \varphi\right)
,\nabla w\right)  _{\mathbf{L}^{2}\left(  \Omega_{j}^{\sigma}\right)
}+\left\vert s\right\vert ^{2}\left(  \mathsf{E}_{j}\left(  s\right)
\varphi,w\right)  _{L^{2}\left(  \Omega_{j}^{\sigma}\right)  }& =0\quad\forall
w\in H^{1}\left(  \Omega_{j}^{\sigma}\right)  .
\end{align*}
From \cite[Prop. 2.5.1]{sayas_book} the estimate (\ref{tracelifting}) follows.

For (\ref{conormaltraceest}) we adapt the standard proof (see, e.g.,
\cite[Prop. 2.5.2]{sayas_book}) to our setting. For given $\psi\in
H^{1/2}\left(  \Gamma_{j}\right)  $ let $w:=\mathsf{E}_{j}\left(  s\right)
\psi$. Let $w^{\sigma}:=\left.  w\right\vert _{\Omega_{j}^{\sigma}}$ and
$v^{\sigma}:=\left.  v\right\vert _{\Omega_{j}^{\sigma}}$, $\sigma\in\left\{
+,-\right\}  $. Green's first identity (\ref{Green1}) gives us
\begin{align*}
\left\vert \left\langle \gamma_{\operatorname*{N};j}^{\operatorname*{ext}%
,\sigma}\left(  s\right)  v^{\sigma},\overline{\psi}\right\rangle _{\Gamma
_{j}}\right\vert  &  =\left\vert \left(  \frac{\overline{s}}{s}\right)
^{1/2}\left\langle \gamma_{\operatorname*{N};j}^{\operatorname*{ext},\sigma
}\left(  s\right)  v^{\sigma},\gamma_{\operatorname*{D};j}^{\sigma}\left(
s\right)  \overline{w^{\sigma}}\right\rangle _{\Gamma_{j}}\right\vert \\
&  =\left\vert \left\langle \mathbb{A}_{j}^{\sigma}\nabla v^{\sigma}%
,\nabla\overline{w^{\sigma}}\right\rangle _{\Omega_{j}^{\sigma}}%
+s^{2}\left\langle p_{j}^{\sigma}v^{\sigma},\overline{w^{\sigma}}\right\rangle
_{\Omega_{j}^{\sigma}}\right\vert \\
&  \overset{\text{Lem. \ref{LemLaxMilgram}}}{\leq}\Lambda_{j}\left\Vert
v^{\sigma}\right\Vert _{H^{1}\left(  \Omega_{j}^{\sigma}\right)  ;s}\left\Vert
w^{\sigma}\right\Vert _{H^{1}\left(  \Omega_{j}^{\sigma}\right)  ;s}\\
&  \overset{\text{(\ref{tracelifting})}}{\leq}C\Lambda_{j}\left\Vert
v^{\sigma}\right\Vert _{H^{1}\left(  \Omega_{j}^{\sigma}\right)  ;s}\left\Vert
\psi\right\Vert _{H^{1/2}\left(  \Gamma_{j}\right)  }.
\end{align*}
Finally,%
\[
\left\Vert \gamma_{\operatorname*{N};j}^{\operatorname*{ext},\sigma}\left(
s\right)  v^{\sigma}\right\Vert _{H^{-1/2}\left(  \Gamma_{j}\right)  }%
=\sup_{\psi\in H^{1/2}\left(  \Gamma_{j}\right)  \backslash\left\{  0\right\}
}\frac{\left\vert \left\langle \gamma_{\operatorname*{N};j}%
^{\operatorname*{ext},\sigma}\left(  s\right)  v^{\sigma},\overline{\psi
}\right\rangle _{\Gamma_{j}}\right\vert }{\left\Vert \psi\right\Vert
_{H^{1/2}\left(  \Gamma_{j}\right)  }}\leq C\Lambda_{j}\left\Vert v^{\sigma
}\right\Vert _{H^{1}\left(  \Omega_{j}^{\sigma}\right)  ;s}.
\]
\end{proof}

\begin{lemma}
  \label{Lem_s_explicit_operator_est}Let Assumption \ref{Acoeff} be satisfied.  Then the
  sesquilinear form induced by the single layer boundary integral operator satisfies the
  \rhc{$s$-explicit} coercivity and continuity estimates%
\begin{subequations}
\label{slbiebounds}
\end{subequations}%
\begin{align}
\operatorname{Re}\left\langle \varphi,\overline{\mathsf{V}_{j}\left(
s\right)  \varphi}\right\rangle _{\Gamma_{j}}  &  \geq c\frac
{\operatorname{Re}s}{\left\vert s\right\vert }\frac{\lambda_{j}}{\Lambda
_{j}^{2}}\left\Vert \varphi\right\Vert _{H^{-1/2}\left(  \Gamma_{j}\right)
}^{2}\qquad\forall\varphi\in H^{-1/2}\left(  \Gamma_{j}\right)  ,\tag{%
\ref{slbiebounds}%
a}\label{slbieboundsa}\\
\left\vert \left\langle \mathsf{V}_{j}\left(  s\right)  \varphi,\overline
{\psi}\right\rangle _{\Gamma_{j}}\right\vert  &  \leq C\frac{\left\vert
s\right\vert ^{2}}{\lambda_{j}\operatorname{Re}s}\left\Vert \varphi\right\Vert
_{H^{-1/2}\left(  \Gamma_{j}\right)  }\left\Vert \psi\right\Vert
_{H^{-1/2}\left(  \Gamma_{j}\right)  }\quad\forall\varphi,\psi\in
H^{-1/2}\left(  \Gamma_{j}\right)  . \tag{%
\ref{slbiebounds}%
b}\label{slbieboundsb}%
\end{align}
The dual double layer boundary integral operator is bounded and satisfies the
estimates%
\begin{equation}
\left\Vert \mathsf{K}_{j}^{\prime}\left(  s\right)  \varphi\right\Vert
_{H^{-1/2}\left(  \Gamma_{j}\right)  }\leq C\frac{\Lambda_{j}}{\lambda_{j}%
}\frac{\left\vert s\right\vert ^{3/2}}{\operatorname{Re}s}\left\Vert
\varphi\right\Vert _{H^{-1/2}\left(  \Gamma_{j}\right)  }\quad\forall
\varphi\in H^{-1/2}\left(  \Gamma_{j}\right)  \text{.} \label{kprimebound}%
\end{equation}
The sesquilinear form induced by the hypersingular boundary integral operator
satisfies the coercivity and continuity estimate%
\begin{subequations}
\label{Wcoercall}
\end{subequations}%
\begin{align}
\operatorname{Re}\left\langle \mathsf{W}_{j}\left(  s\right)  \psi
,\overline{\mathsf{\psi}}\right\rangle _{\Gamma_{j}}  &  \geq c\frac
{\operatorname{Re}s}{\left\vert s\right\vert ^{2}}\lambda_{j}\left\Vert
\psi\right\Vert _{H^{1/2}\left(  \Gamma_{j}\right)  }^{2}\qquad\forall\psi\in
H^{1/2}\left(  \Gamma_{j}\right)  ,\tag{%
\ref{Wcoercall}%
a}\label{Wcoerc}\\
\left\vert \left\langle \mathsf{W}_{j}\left(  s\right)  \psi,\overline
{\varphi}\right\rangle _{\Gamma_{j}}\right\vert  &  \leq C\frac{\Lambda
_{j}^{2}}{\lambda_{j}}\frac{\left\vert s\right\vert }{\operatorname{Re}%
s}\left\Vert \psi\right\Vert _{H^{1/2}\left(  \Gamma_{j}\right)  }\left\Vert
\varphi\right\Vert _{H^{1/2}\left(  \Gamma_{j}\right)  }\quad\forall
\varphi,\psi\in H^{1/2}\left(  \Gamma_{j}\right)  . \tag{%
\ref{Wcoercall}%
b}\label{Wcoercb}%
\end{align}
The double layer boundary integral operator is bounded and satisfies the
estimate%
\begin{equation}
\left\Vert \mathsf{K}_{j}\left(  s\right)  \psi\right\Vert _{H^{1/2}\left(
\Gamma_{j}\right)  }\leq C\frac{\Lambda_{j}}{\lambda_{j}}\frac{\left\vert
s\right\vert ^{3/2}}{\operatorname{Re}s}\left\Vert \psi\right\Vert
_{H^{1/2}\left(  \Gamma_{j}\right)  }\quad\forall\psi\in H^{1/2}\left(
\Gamma_{j}\right)  \text{.} \label{kbound}%
\end{equation}
For the single layer potential, the estimate%
\begin{equation}
\left\Vert \mathsf{S}_{j}\left(  s\right)  \varphi\right\Vert _{H^{1}\left(
\mathbb{R}^{3}\right)  ;s}\leq C\frac{\left\vert s\right\vert ^{3/2}}%
{\lambda_{j}\operatorname{Re}s}\left\Vert \varphi\right\Vert _{H^{-1/2}\left(
\Gamma_{j}\right)  }\quad\forall\varphi\in H^{-1/2}\left(  \Gamma_{j}\right)
\label{SLPest}%
\end{equation}
holds. The operator norm of the double layer potential is bounded by 
\begin{equation}
\left\Vert \mathsf{D}_{j}\left(  s\right)  \psi\right\Vert _{H^{1}\left(
\mathbb{R}^{3}\backslash\Gamma_{j}\right)  ;s}\leq C\frac{\Lambda_{j}}%
{\lambda_{j}}\frac{\left\vert s\right\vert }{\operatorname{Re}s}\left\Vert
\psi\right\Vert _{H^{1/2}\left(  \Gamma_{j}\right)  }\quad\forall\psi\in
H^{1/2}\left(  \Gamma_{j}\right)  , \label{DLPest}%
\end{equation}
where for $u\in L^{2}\left(  \mathbb{R}^{3}\right)  $ with $u^{\sigma
}:=\left.  u\right\vert _{\Omega_{j}^{\sigma}}\in H^{1}\left(  \Omega
_{j}^{\sigma}\right)  $, $\sigma=\left\{  +,-\right\}  $ the \emph{broken
}$H^{1}$ norm is given by%
\[
\left\Vert u\right\Vert _{H^{1}\left(  \mathbb{R}^{3}\backslash\Gamma
_{j}\right)  ;s}:=\left(  \sum_{\sigma\in\left\{  +,-\right\}  }\left\Vert
u^{\sigma}\right\Vert _{H^{1}\left(  \Omega_{j}^{\sigma}\right)  ;s}%
^{2}\right)  ^{1/2}.
\]
All constants $c,C>0$ only depend on $\Omega_{j}$ and, in particular, are independent of
$s$. 
\end{lemma}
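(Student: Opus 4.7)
The strategy is to establish the two potential estimates \eqref{SLPest} and \eqref{DLPest} first, and then derive every boundary-integral-operator estimate by composition with appropriate one-sided traces; throughout, the factor $\mu:=s/|s|$ from Lemma~\ref{LemLaxMilgram} must be carried along carefully.

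\textbf{Potential estimates.} For \eqref{SLPest}, I insert $w:=\mathsf{S}_j(s)\varphi$ into the defining identity \eqref{DefSLPForm} and then multiply by $\bar\mu$. The left-hand side becomes $\ell_j(s)(\mathsf{S}_j(s)\varphi,\mu\mathsf{S}_j(s)\varphi)$; its real part is bounded below by coercivity as $\lambda_j(\operatorname{Re}s/|s|)\|\mathsf{S}_j(s)\varphi\|^2_{H^1(\mathbb{R}^3);s}$. The right-hand side is controlled by $C|s|^{1/2}\|\varphi\|_{H^{-1/2}(\Gamma_j)}\|\mathsf{S}_j(s)\varphi\|_{H^1(\mathbb{R}^3);s}$ via the scaled trace estimate \eqref{scaledtraceest}; cancelling one factor yields \eqref{SLPest}. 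For \eqref{DLPest}, set $u:=\mathsf{D}_j(s)\psi$; by Lemma~\ref{LemMapPropDLP} the pieces $u^\sigma$ satisfy the homogeneous PDE together with $[u]_{\operatorname{D};j}(s)=\psi$ and $[u]_{\operatorname{N};j}^{\operatorname{ext}}(s)=0$. Applying \eqref{ljconjump} with $v=w=u$ gives
\begin{equation*}
  \sum_{\sigma\in\{+,-\}}\ell_j^\sigma(s)(u^\sigma,u^\sigma)=-\mu\,\langle\gamma_{\operatorname{N};j}^{\operatorname{ext},-}(s)u,\bar\psi\rangle_{\Gamma_j}.
\end{equation*}
Multiplying by $\bar\mu$, taking real parts, and using subdomain coercivity dominates the sum from below by $\lambda_j(\operatorname{Re}s/|s|)\|u\|^2_{H^1(\mathbb{R}^3\setminus\Gamma_j);s}$, while the co-normal trace estimate \eqref{conormaltraceest} bounds the right-hand side by $C\Lambda_j\|u\|_{H^1(\mathbb{R}^3\setminus\Gamma_j);s}\|\psi\|_{H^{1/2}(\Gamma_j)}$; rearranging gives \eqref{DLPest}.

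\textbf{Coercivities \eqref{slbieboundsa} and \eqref{Wcoerc}.} Since $[\mathsf{S}_j(s)\varphi]_{\operatorname{D};j}=0$, I have $\mathsf{V}_j(s)\varphi=\gamma_{\operatorname{D};j}(s)\mathsf{S}_j(s)\varphi$; a short conjugation exploiting $s^{1/2}/\overline{s^{1/2}}=\mu$ yields the identity $\langle\varphi,\overline{\mathsf{V}_j(s)\varphi}\rangle_{\Gamma_j}=\bar\mu\,\ell_j(s)(\mathsf{S}_j(s)\varphi,\mathsf{S}_j(s)\varphi)$, whose real part is $\geq\lambda_j(\operatorname{Re}s/|s|)\|\mathsf{S}_j(s)\varphi\|^2_{H^1(\mathbb{R}^3);s}$. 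The hard part will be the $s$-uniform dual lower bound $\|\varphi\|_{H^{-1/2}(\Gamma_j)}\leq C\Lambda_j\|\mathsf{S}_j(s)\varphi\|_{H^1(\mathbb{R}^3);s}$, where no spurious $|s|$-factor may appear. I obtain it by duality: for arbitrary $\phi\in H^{1/2}(\Gamma_j)$, I take $\bar\eta:=\mathsf{E}_j(s)(s^{1/2}\phi)\in H^1(\mathbb{R}^3)$, which by \eqref{tracelifting} satisfies $\gamma_{\operatorname{D};j}\bar\eta=\phi$ together with $\|\eta\|_{H^1(\mathbb{R}^3);s}\leq C|s|^{1/2}\|\phi\|_{H^{1/2}(\Gamma_j)}$; substituting $\eta$ into \eqref{DefSLPForm} and invoking continuity of $\ell_j$, the two $|s|^{1/2}$ factors cancel, delivering the dual bound. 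Combining gives \eqref{slbieboundsa}. The coercivity \eqref{Wcoerc} follows the same template, starting from the identity derived above, $\langle\mathsf{W}_j(s)\psi,\bar\psi\rangle_{\Gamma_j}=\bar\mu\,\sum_\sigma\ell_j^\sigma(s)(u^\sigma,u^\sigma)$, dominating via subdomain coercivity, and using \eqref{scaledtraceest} on $\psi=[u]_{\operatorname{D};j}(s)$ to get $\|u\|^2_{H^1(\mathbb{R}^3\setminus\Gamma_j);s}\geq(c/|s|)\|\psi\|^2_{H^{1/2}(\Gamma_j)}$.

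\textbf{Continuities \eqref{slbieboundsb}, \eqref{kbound}, \eqref{kprimebound}, \eqref{Wcoercb}.} These are now immediate consequences of the potential estimates and the scaled (co-)normal trace bounds. Precisely, \eqref{slbieboundsb} comes from $\mathsf{V}_j(s)\varphi=\gamma_{\operatorname{D};j}(s)\mathsf{S}_j(s)\varphi$ combined with \eqref{scaledtraceest} and \eqref{SLPest}; \eqref{kbound} from $\mathsf{K}_j(s)\psi=\{\!\!\{\mathsf{D}_j(s)\psi\}\!\!\}_{\operatorname{D};j}(s)$ together with \eqref{scaledtraceest} on each side of $\Gamma_j$ and \eqref{DLPest}; \eqref{kprimebound} from $\mathsf{K}_j'(s)\varphi=\{\!\!\{\mathsf{S}_j(s)\varphi\}\!\!\}_{\operatorname{N};j}^{\operatorname{ext}}(s)$ together with \eqref{conormaltraceest} (applicable because each piece of $\mathsf{S}_j(s)\varphi$ solves the homogeneous PDE by Lemma~\ref{LemMapPropSLP}) and \eqref{SLPest}; and finally, exploiting $[\mathsf{D}_j(s)\psi]_{\operatorname{N};j}^{\operatorname{ext}}=0$ to write $\mathsf{W}_j(s)\psi=-\gamma_{\operatorname{N};j}^{\operatorname{ext},-}(s)\mathsf{D}_j(s)\psi$, the estimate \eqref{Wcoercb} follows from \eqref{conormaltraceest} and \eqref{DLPest}. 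The principal technical obstacle of the whole lemma is therefore concentrated in the dual lower bound for $\mathsf{S}_j(s)\varphi$ used to establish the coercivity of $\mathsf{V}_j$; everything else is bookkeeping of the $s$-scalings so that the factor $\operatorname{Re}s/|s|$ is preserved wherever coercivity is applied.
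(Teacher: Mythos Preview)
Your proposal is correct and follows essentially the same route as the paper's proof in Appendix~\ref{AppProof}: both arguments hinge on the identity $\operatorname{Re}\bigl((\bar s/s)^{1/2}\ell_j(s)(u,u)\bigr)\geq(\operatorname{Re}s/|s|)\lambda_j\|u\|^2_{H^1;s}$ applied to $u=\mathsf{S}_j(s)\varphi$ or $u=\mathsf{D}_j(s)\psi$, followed by trace estimates from Lemma~\ref{LemScaledTraces}. The only cosmetic difference is that for the ``hard'' dual lower bound $\|\varphi\|_{H^{-1/2}}\leq C\Lambda_j\|\mathsf{S}_j(s)\varphi\|_{H^1;s}$ you re-run the duality argument with the lifting $\mathsf{E}_j(s)$, whereas the paper simply cites \eqref{conormaltraceest} (whose proof is precisely that duality argument), and the paper derives \eqref{SLPest} after the coercivity of $\mathsf{V}_j$ rather than before; these are purely organizational variations on the same computation.
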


The proof of this lemma follows standard arguments and hence is postponed to
Appendix~\ref{AppProof}.

\begin{lemma}
  Let Assumption \ref{Acoeff} be satisfied.

  The sesquilinear form $\left\langle
\mathsf{C}_{j}\left(  s\right)  \cdot,\overline{\cdot}\right\rangle
_{\mathbf{X}_{j}}:\mathbf{X}_{j}\times\mathbf{X}_{j}\rightarrow\mathbb{C}$ is
coercive: 
\begin{gather*}
\operatorname{Re}\left\langle \mathsf{C}_{j}\left(  s\right)
\mbox{\boldmath$ \psi$}_{j},\overline{\mbox{\boldmath$ \psi$}_{j}}\right\rangle
_{\mathbf{X}_{j}}\geq
c\frac{\lambda_{j}}{1+\Lambda_{j}^{2}}\frac{\operatorname{Re}s}{\left\vert s\right\vert
  ^{2}}\left\Vert\mbox{\boldmath$ \psi$}\right\Vert _{\mathbf{X}_{j}}^{2}\quad \forall
\mbox{\boldmath$ \psi$}_{j}\in\mathbf{X}_{j},
\end{gather*}
and continuous: 
\begin{gather*}
  \left\vert \left\langle \mathsf{C}_{j}\left( s\right)
      \mbox{\boldmath$ \psi$}_{j},\overline{\mbox{\boldmath$ \phi$}_{j}}\right\rangle
    _{\mathbf{X}_{j}}\right\vert \leq C\frac{1+\Lambda_{j}%
  }{\lambda_{j}}\frac{\left\vert s\right\vert ^{2}}{\operatorname{Re}%
    s}\left\Vert\mbox{\boldmath$ \psi$}_{j}\right\Vert _{\mathbf{X}_{j}}\left\Vert
    \mbox{\boldmath$ \phi$}_{j}\right\Vert _{\mathbf{X}_{j}} \quad \forall
  \mbox{\boldmath$ \psi$}_{j}, \mbox{\boldmath$ \phi$}_{j}\in\mathbf{X}_{j}.
\end{gather*}
\end{lemma}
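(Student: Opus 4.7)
The plan is to derive coercivity by relating the Calder\'on sesquilinear form on $\mathbf{X}_{j}$ to the coercive full-space form $\ell_{j}(s)$ evaluated on an auxiliary layer-potential lift of the boundary datum. For $\boldsymbol{\psi}_{j}=(\psi_{\operatorname*{D};j},\psi_{\operatorname*{N};j})\in\mathbf{X}_{j}$, I would introduce
\[
w:=\mathsf{D}_{j}(s)\,\psi_{\operatorname*{D};j}-\mathsf{S}_{j}(s)\,\psi_{\operatorname*{N};j}.
\]
Lemmas~\ref{LemMapPropSLP} and~\ref{LemMapPropDLP} guarantee that $w\in H^{1}(\mathbb{R}^{3}\setminus\Gamma_{j},\mathbb{A}_{j}^{\operatorname*{ext}})$, that $w^{\sigma}:=w|_{\Omega_{j}^{\sigma}}$ solves the homogeneous PDE in $\Omega_{j}^{\sigma}$, and that the Cauchy jumps of $w$ reproduce $\boldsymbol{\psi}_{j}$ while its Cauchy means equal $-\mathsf{C}_{j}(s)\boldsymbol{\psi}_{j}$ by Definition~\ref{DefBIO}.

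Next, I apply Green's first identity (\ref{Green1}) on each subdomain with the test function $\mu w^{\sigma}$, where $\mu:=s/|s|$. The short computation $\gamma_{\operatorname*{D};j}^{\sigma}(s)\overline{\mu w^{\sigma}}=\overline{\gamma_{\operatorname*{D};j}^{\sigma}(s)w^{\sigma}}$ reduces each boundary term to $\langle\gamma_{\operatorname*{N};j}^{\operatorname*{ext},\sigma}(s)w^{\sigma},\overline{\gamma_{\operatorname*{D};j}^{\sigma}(s)w^{\sigma}}\rangle_{\Gamma_{j}}$. Writing each one-sided trace as a mean plus or minus one half of the corresponding jump and summing over $\sigma\in\{+,-\}$, the symmetric mean--mean and jump--jump contributions cancel, leaving only the mixed cross terms
\[
\sum_{\sigma\in\{+,-\}}\ell_{j}^{\sigma}(s)(w^{\sigma},\mu w^{\sigma})=\langle(\mathsf{C}_{j}(s)\boldsymbol{\psi}_{j})_{\operatorname*{N}},\overline{\psi_{\operatorname*{D};j}}\rangle_{\Gamma_{j}}+\langle\psi_{\operatorname*{N};j},\overline{(\mathsf{C}_{j}(s)\boldsymbol{\psi}_{j})_{\operatorname*{D}}}\rangle_{\Gamma_{j}}.
\]
Taking real parts and using the symmetry of the bilinear pairing together with $\operatorname{Re}\langle a,\overline{b}\rangle=\operatorname{Re}\langle b,\overline{a}\rangle$, the right-hand side equals $\operatorname{Re}\langle\mathsf{C}_{j}(s)\boldsymbol{\psi}_{j},\overline{\boldsymbol{\psi}_{j}}\rangle_{\mathbf{X}_{j}}$.

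For the coercivity estimate, Lemma~\ref{LemLaxMilgram} applied subdomain by subdomain gives $\sum_{\sigma}\operatorname{Re}\ell_{j}^{\sigma}(s)(w^{\sigma},\mu w^{\sigma})\geq\frac{\operatorname{Re}s}{|s|}\lambda_{j}\|w\|_{H^{1}(\mathbb{R}^{3}\setminus\Gamma_{j});s}^{2}$. To transfer this lower bound to $\mathbf{X}_{j}$, I would invert the trace inequalities: applying (\ref{scaledtraceest}) to $[w]_{\operatorname*{D};j}(s)=\psi_{\operatorname*{D};j}$ yields $\|\psi_{\operatorname*{D};j}\|_{H^{1/2}(\Gamma_{j})}^{2}\leq C|s|\,\|w\|_{H^{1}(\mathbb{R}^{3}\setminus\Gamma_{j});s}^{2}$, while (\ref{conormaltraceest}) applied to $[w]_{\operatorname*{N};j}^{\operatorname*{ext}}(s)=\psi_{\operatorname*{N};j}$ yields $\|\psi_{\operatorname*{N};j}\|_{H^{-1/2}(\Gamma_{j})}^{2}\leq C\Lambda_{j}^{2}\,\|w\|_{H^{1}(\mathbb{R}^{3}\setminus\Gamma_{j});s}^{2}$. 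Averaging and absorbing constants via $|s|\geq s_{0}$ gives $\|w\|^{2}\geq\frac{c}{|s|(1+\Lambda_{j}^{2})}\|\boldsymbol{\psi}_{j}\|_{\mathbf{X}_{j}}^{2}$, which together with the previous lower bound produces the claimed coercivity estimate with the prefactor $\lambda_{j}\operatorname{Re}s/|s|^{2}(1+\Lambda_{j}^{2})$.

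Continuity is immediate: by $|\langle\boldsymbol{\phi}_{j},\overline{\boldsymbol{\psi}_{j}}\rangle_{\mathbf{X}_{j}}|\leq\sqrt{2}\|\boldsymbol{\phi}_{j}\|_{\mathbf{X}_{j}}\|\boldsymbol{\psi}_{j}\|_{\mathbf{X}_{j}}$ it suffices to bound $\|\mathsf{C}_{j}(s)\boldsymbol{\psi}_{j}\|_{\mathbf{X}_{j}}$; inserting the four $s$-explicit operator estimates (\ref{slbieboundsb}), (\ref{kbound}), (\ref{kprimebound}), (\ref{Wcoercb}) from Lemma~\ref{Lem_s_explicit_operator_est} into the block structure of $\mathsf{C}_{j}(s)$ and dominating the intermediate powers $|s|$ and $|s|^{3/2}$ by $|s|^{2}$ at the cost of $s_{0}$-dependent constants yields the claim. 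The principal technical point is the algebraic cancellation in the $\sigma$-sum of the second step, which collapses the apparently asymmetric boundary pairing into a symmetric cross form realizing $\operatorname{Re}\langle\mathsf{C}_{j}(s)\boldsymbol{\psi}_{j},\overline{\boldsymbol{\psi}_{j}}\rangle_{\mathbf{X}_{j}}$ and thus bypasses having to argue directly about duality between $\mathsf{K}_{j}$ and $\mathsf{K}_{j}'$ with complex $s$; everything else is routine composition of previously established estimates.
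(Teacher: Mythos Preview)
Your proposal is correct and follows essentially the same route as the paper. The paper defines $u:=\mathsf{S}_{j}(s)\psi_{\operatorname*{N};j}-\mathsf{D}_{j}(s)\psi_{\operatorname*{D};j}=-w$, identifies the means of $u$ with $\mathsf{C}_{j}(s)\boldsymbol{\psi}_{j}$ and its jumps with $-\boldsymbol{\psi}_{j}$, and then uses a ``jump--average parallelogram identity'' (stated as a footnote) which is exactly your cancellation of the mean--mean and jump--jump terms in the $\sigma$-sum; the only cosmetic difference is that the paper carries the $(s/\bar s)^{1/2}$ factors explicitly instead of absorbing them via the test function $\mu w^{\sigma}$. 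For the Neumann part of the trace bound the paper re-derives the estimate through (\ref{ljjumprel}) and the lifting $\mathsf{E}_{j}(s)$, whereas you invoke (\ref{conormaltraceest}) directly---but (\ref{conormaltraceest}) is precisely that argument packaged in Lemma~\ref{LemScaledTraces}, so nothing is gained or lost. The continuity argument via the four block estimates of Lemma~\ref{Lem_s_explicit_operator_est} is identical.
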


The proof follows closely the arguments in \cite[Lem. 3.1]{banjai_coupling}
for the case of constant coefficients and we adapt it here to our general setting.%

\begin{proof}
We pick some $%
\mbox{\boldmath$ \psi$}%
_{j}:=\left(  \psi_{\operatorname*{D};j},\psi_{\operatorname*{N};j}\right)
\in\mathbf{X}_{j}$ and define $u\in H^{1}\left(  \mathbb{R}^{3}\backslash
\Gamma_{j}\right)  $ by%
\[
u:=\mathsf{S}_{j}\left(  s\right)  \psi_{\operatorname*{N};j}-\mathsf{D}%
_{j}\left(  s\right)  \psi_{\operatorname*{D};j}.
\]
We set $u^{\sigma}:=\left.  u\right\vert _{\Omega_{j}^{\sigma}}$, $\sigma
\in\left\{  -,+\right\}  $. The jump relations (\ref{jumprelSLP}),
(\ref{jumprelDLP}) imply%
\[
\left[  u\right]  _{\operatorname*{D};j}\left(  s\right)  =-\psi
_{\operatorname*{D};j}\quad\text{and\quad}\left[  u\right]
_{\operatorname*{N},j}^{\operatorname*{ext}}\left(  s\right)  =-\psi
_{\operatorname*{N};j}%
\]
while the relations%
\begin{align*}
\{\!\!\{u\}\!\!\}_{\operatorname*{D};j}\left(  s\right)   &  =\mathsf{V}%
_{j}\left(  s\right)  \psi_{\operatorname*{N};j}-\mathsf{K}_{j}\left(
s\right)  \psi_{\operatorname*{D};j},\\
\{\!\!\{u\}\!\!\}_{\operatorname*{N};j}^{\operatorname*{ext}}\left(  s\right)
&  =\mathsf{K}_{j}^{\prime}\left(  s\right)  \psi_{\operatorname*{N}%
;j}+\mathsf{W}_{j}\left(  s\right)  \psi_{\operatorname*{D};j}%
\end{align*}
follow directly from the definition of the boundary integral operators. A more
compact formulation is%
\[
\mathsf{C}_{j}\left(  s\right)
\mbox{\boldmath$ \psi$}%
_{j}=\left(
\begin{array}
[c]{c}%
\{\!\!\{u\}\!\!\}_{\operatorname*{D};j}\left(  s\right) \\
\{\!\!\{u\}\!\!\}_{\operatorname*{N};j}^{\operatorname*{ext}}\left(  s\right)
\end{array}
\right)  .
\]
Since $\mathsf{S}_{j}\left(  s\right)  \psi_{\operatorname*{N};j}$ and
$\mathsf{D}_{j}\left(  s\right)  \psi_{\operatorname*{D};j}\ $satisfy the
homogeneous PDE in $\Omega_{j}^{-}$ and $\Omega_{j}^{+}$ (cf. (\ref{SlpProp1}%
), (\ref{DlpProp1})) we may apply Green's identity (\ref{Green1}) and the
definition of the jumps and means (\ref{jumpdef}), (\ref{meandef}) to obtain
by a jump-average parallelogram identity\footnote{
For $\alpha,a_{1},a_{2},b_{1},b_{2}\in\mathbb{C}$ a direct calculation shows%
\[
\alpha\frac{a_{1}+b_{1}}{2}\left(  \overline{b_{2}}-\overline{a_{2}}\right)
+\overline{\alpha}\left(  \overline{b_{1}}-\overline{a_{1}}\right)  \left(
\frac{a_{2}+b_{2}}{2}\right)  =-\operatorname{Re}\left(  \alpha a_{1}%
\overline{a_{2}}+\alpha b_{1}\overline{b_{2}}\right)  +\operatorname*{i}%
\operatorname{Im}\left(  \alpha a_{1}\overline{b_{2}}+\overline{\alpha}%
a_{2}\overline{b_{1}}\right)
\]
so that%
\[
-\operatorname{Re}\left(  \alpha\frac{a_{1}+b_{1}}{2}\left(  \overline{b_{2}%
}-\overline{a_{2}}\right)  +\overline{\alpha}\left(  \overline{b_{1}%
}-\overline{a_{1}}\right)  \left(  \frac{a_{2}+b_{2}}{2}\right)  \right)
=\operatorname{Re}\left(  \alpha a_{1}\overline{a_{2}}+\alpha b_{1}%
\overline{b_{2}}\right)  .
\]
}:%
\begin{align*}
&  \operatorname{Re}\left\langle \mathsf{C}_{j}\left(  s\right)
\mbox{\boldmath$ \psi$}%
_{j},\overline{%
\mbox{\boldmath$ \psi$}%
_{j}}\right\rangle _{\mathbf{X}_{j}}\\
&  \quad=-\operatorname{Re}\left(  \left(  \frac{s}{\overline{s}}\right)
^{1/2}\left\langle \{\!\!\{u\}\!\!\}_{\operatorname*{D};j}\left(  s\right)
,\left[  \overline{u}\right]  _{\operatorname*{N},j}^{\operatorname*{ext}%
}\left(  s\right)  \right\rangle _{\Gamma_{j}}+\left(  \frac{\bar{s}}%
{s}\right)  ^{1/2}\left\langle \left[  \overline{u}\right]
_{\operatorname*{D};j}\left(  s\right)  ,\{\!\!\{u\}\!\!\}_{\operatorname*{N}%
;j}^{\operatorname*{ext}}\left(  s\right)  \right\rangle _{\Gamma_{j}}\right)
\\
&  \quad=\operatorname{Re}\left(  \left(  \frac{\overline{s}}{s}\right)
^{1/2}\left(  \left\langle \mathbb{A}_{j}^{+}\nabla u^{+},\nabla
\overline{u^{+}}\right\rangle _{\Omega_{j}^{+}}+s^{2}\left\langle p_{j}^{+}%
u^{+},%
\overline{u^{+}}\right\rangle _{\Omega_{j}^{+}}\right)  \right) \\
&  \quad\quad+\operatorname{Re}\left(  \left(  \frac{s}{\overline{s}}\right)
^{1/2}\left(  \left\langle \mathbb{A}_{j}^{-}\nabla\overline{u^{-}},\nabla
u^{-}\right\rangle _{\Omega_{j}^{-}}+s^{2}\left\langle p_{j}^{-}%
\overline{u^{-}},%
u^{-}\right\rangle _{\Omega_{j}^{-}}\right)  \right)  .
\end{align*}
As in the proof of Lemma \ref{LemLaxMilgram} we obtain%
\begin{equation}
\operatorname{Re}\left\langle \mathsf{C}_{j}\left(  s\right)
\mbox{\boldmath$ \psi$}%
_{j},\overline{%
\mbox{\boldmath$ \psi$}%
_{j}}\right\rangle _{\mathbf{X}_{j}}\geq\frac{\operatorname{Re}s}{\left\vert
s\right\vert }\lambda_{j}\left\Vert u\right\Vert _{H^{1}\left(  \mathbb{R}%
^{3}\backslash\Gamma_{j}\right)  ;s}^{2}. \label{CaldCoer1}%
\end{equation}
To estimate the right-hand side we start with%
\begin{equation}
  \begin{aligned}
    \left\Vert s^{-1/2}\psi_{\operatorname*{D};j}\right\Vert _{H^{1/2}\left(
        \Gamma_{j}\right)  }^{2}+\left\Vert \psi_{\operatorname*{N};j}\right\Vert
    _{H^{-1/2}\left(  \Gamma_{j}\right)  }^{2} & =\left\Vert s^{-1/2}\left[  u\right]
      _{\operatorname*{D};j}\left(  s\right)  \right\Vert _{H^{1/2}\left(
        \Gamma_{j}\right)  }^{2}\\
    &\quad+\left\Vert \left[  u\right]  _{\operatorname*{N}%
        ,j}^{\operatorname*{ext}}\left(  s\right)  \right\Vert _{H^{-1/2}\left(
        \Gamma_{j}\right)  }^{2}.
  \end{aligned}
  \label{CaldCoer2}%
\end{equation}
From (\ref{scaledtraceest}) and a triangle inequality we conclude that%
\begin{align}
\left\Vert s^{-1/2}\psi_{\operatorname*{D};j}\right\Vert _{H^{1/2}\left(
\Gamma_{j}\right)  }^{2}  &  =\left\Vert s^{-1/2}\left[  u\right]
_{\operatorname*{D};j}\left(  s\right)  \right\Vert _{H^{1/2}\left(
\Gamma_{j}\right)  }^{2}\label{CaldCoer3}\\
&  \leq2\sum_{\sigma\in\left\{  +,-\right\}  }\left\vert s\right\vert
^{-1}\left\Vert \gamma_{\operatorname*{D};j}^{\sigma}\left(  s\right)
u^{\sigma}\right\Vert _{H^{1/2}\left(  \Gamma_{j}\right)  }^{2}\leq
C\left\Vert u\right\Vert _{H^{1}\left(  \mathbb{R}^{3}\backslash\Gamma
_{j}\right)  ;s}^{2}.\nonumber
\end{align}
From (\ref{ljjumprel}) and by using the lifting $\mathsf{E}_{j}\left(
s\right)  $ as in Lemma \ref{LemScaledTraces}, we obtain%
\begin{align}
\left\Vert \psi_{\operatorname*{N};j}\right\Vert _{H^{-1/2}\left(  \Gamma
_{j}\right)  }  &  =\left\Vert \left[  u\right]  _{\operatorname*{N}%
,j}^{\operatorname*{ext}}\left(  s\right)  \right\Vert _{H^{-1/2}\left(
\Gamma_{j}\right)  }=\sup_{\phi\in H^{1/2}\left(  \Gamma_{j}\right)
\backslash\left\{  0\right\}  }\frac{\left\vert \left\langle \left[  u\right]
_{\operatorname*{N},j}^{\operatorname*{ext}}\left(  s\right)  ,\overline{\phi
}\right\rangle _{\Gamma_{j}}\right\vert }{\left\Vert \phi\right\Vert
_{H^{1/2}\left(  \Gamma_{j}\right)  }}\label{CaldCoer4}\\
&  =\sup_{\phi\in H^{1/2}\left(  \Gamma_{j}\right)  \backslash\left\{
0\right\}  }\frac{\left\vert \left(  \frac{\overline{s}}{s}\right)
^{1/2}\left\langle \left[  u\right]  _{\operatorname*{N},j}%
^{\operatorname*{ext}}\left(  s\right)  ,\gamma_{\operatorname*{D};j}\left(
s\right)  \overline{\mathsf{E}_{j}\left(  s\right)  \phi}\right\rangle
_{\Gamma_{j}}\right\vert }{\left\Vert \phi\right\Vert _{H^{1/2}\left(
\Gamma_{j}\right)  }}\nonumber\\
&  =\sup_{\phi\in H^{1/2}\left(  \Gamma_{j}\right)  \backslash\left\{
0\right\}  }\frac{\left\vert \ell_{j}\left(  s\right)  \left(  u,\mathsf{E}%
_{j}\left(  s\right)  \phi\right)  \right\vert }{\left\Vert \phi\right\Vert
_{H^{1/2}\left(  \Gamma_{j}\right)  }}\nonumber\\
&  \overset{\text{Lem. \ref{LemLaxMilgram}}}{\leq}\Lambda_{j}\left\Vert
u\right\Vert _{H^{1}\left(  \mathbb{R}^{3}\backslash\Gamma_{j}\right)  ;s}%
\sup_{\phi\in H^{1/2}\left(  \Gamma_{j}\right)  \backslash\left\{  0\right\}
}\frac{\left\Vert \mathsf{E}_{j}\left(  s\right)  \phi\right\Vert
_{H^{1}\left(  \mathbb{R}^{3}\right)  ;s}}{\left\Vert \phi\right\Vert
_{H^{1/2}\left(  \Gamma_{j}\right)  }}\nonumber\\
&  \overset{\text{(\ref{tracelifting})}}{\leq}C\Lambda_{j}\left\Vert
u\right\Vert _{H^{1}\left(  \mathbb{R}^{3}\backslash\Gamma_{j}\right)
;s}.\nonumber
\end{align}
The combination of (\ref{CaldCoer1})-(\ref{CaldCoer4}) leads to the coercivity
estimate%
\[
\operatorname{Re}\left\langle \mathsf{C}_{j}\left(  s\right)
\mbox{\boldmath$ \psi$}%
_{j},\overline{%
\mbox{\boldmath$ \psi$}%
_{j}}\right\rangle _{\mathbf{X}_{j}}\geq\tilde{c}\frac{\lambda_{j}}%
{1+\Lambda_{j}^{2}}\frac{\operatorname{Re}s}{\left\vert s\right\vert ^{2}%
}\left\Vert
\mbox{\boldmath$ \psi$}%
_{j}\right\Vert _{\mathbf{X}_{j}}^{2}.
\]
\medskip\medskip

For the continuity estimate we obtain for any 
\begin{equation*}
\mbox{\boldmath$ \psi$}%
_{j}=\left(  \psi_{\operatorname*{D};j},\psi_{\operatorname*{N};j}\right) 
\mbox{\boldmath$ \phi$}%
=\left(  \varphi_{\operatorname*{D};j},\varphi_{\operatorname*{N};j}\right)
\in\mathbf{X}_{j}
\end{equation*}
from Lemma \ref{Lem_s_explicit_operator_est}%
\begin{align*}
&  \left\vert \left\langle \mathsf{C}_{j}\left(  s\right)
\mbox{\boldmath$ \psi$}%
_{j},\overline{%
\mbox{\boldmath$ \phi$}%
_{j}}\right\rangle _{\mathbf{X}_{j}}\right\vert =\left\vert \left\langle
\left(
\begin{array}
[c]{l}%
-\mathsf{K}_{j}\left(  s\right)  \psi_{\operatorname*{D};j}+\mathsf{V}%
_{j}\left(  s\right)  \psi_{\operatorname*{N};j}\\
\mathsf{W}_{j}\left(  s\right)  \psi_{\operatorname*{D};j}+\mathsf{K}%
_{j}^{\prime}\left(  s\right)  \psi_{\operatorname*{N};j}%
\end{array}
\right)  ,\overline{\left(
\begin{array}
[c]{c}%
\varphi_{\operatorname*{D};j}\\
\varphi_{\operatorname*{N};j}%
\end{array}
\right)  }\right\rangle _{\mathbf{X}_{j}}\right\vert \\
&  \qquad=\left\vert \left\langle -\mathsf{K}_{j}\left(  s\right)
\psi_{\operatorname*{D};j}+\mathsf{V}_{j}\left(  s\right)  \psi
_{\operatorname*{N};j},\overline{\varphi_{\operatorname*{N};j}}\right\rangle
_{\Gamma_{j}}+\left\langle \overline{\varphi_{\operatorname*{D};j}}%
,\mathsf{W}_{j}\left(  s\right)  \psi_{\operatorname*{D};j}+\mathsf{K}%
_{j}^{\prime}\left(  s\right)  \psi_{\operatorname*{N};j}\right\rangle
_{\Gamma_{j}}\right\vert \\
&  \qquad\leq C\frac{1}{\lambda_{j}}\frac{\left\vert s\right\vert
}{\operatorname{Re}s}\left(  \Lambda_{j}\left\vert s\right\vert ^{1/2}%
\left\Vert \psi_{\operatorname*{D};j}\right\Vert _{H^{1/2}\left(  \Gamma
_{j}\right)  }\left\Vert \varphi_{\operatorname*{N};j}\right\Vert
  _{H^{-1/2}\left(  \Gamma_{j}\right)  }\right.\\
&  \qquad\quad+\left\vert s\right\vert \left\Vert
\psi_{\operatorname*{N};j}\right\Vert _{H^{-1/2}\left(  \Gamma_{j}\right)
}\left\Vert \varphi_{\operatorname*{N};j}\right\Vert _{H^{-1/2}\left(
\Gamma_{j}\right)  } \\
&  \qquad\quad\left.  +\Lambda_{j}^{2}\left\Vert \psi_{\operatorname*{D}%
;j}\right\Vert _{H^{1/2}\left(  \Gamma_{j}\right)  }\left\Vert \varphi
_{\operatorname*{D};j}\right\Vert _{H^{1/2}\left(  \Gamma_{j}\right)
}+\Lambda_{j}\left\vert s\right\vert ^{1/2}\left\Vert \psi_{\operatorname*{N}%
;j}\right\Vert _{H^{-1/2}\left(  \Gamma_{j}\right)  }\left\Vert \varphi
_{\operatorname*{D};j}\right\Vert _{H^{1/2}\left(  \Gamma_{j}\right)  }\right)
\\
&  \qquad\leq {C'}\frac{1}{\lambda_{j}}\frac{\left\vert s\right\vert ^{2}%
}{\operatorname{Re}s}\left(  \Lambda_{j}^{2}\left\Vert s^{-1/2}\psi
_{\operatorname*{D};j}\right\Vert _{H^{1/2}\left(  \Gamma_{j}\right)  }%
^{2}+\left\Vert \psi_{\operatorname*{N};j}\right\Vert _{H^{1/2}\left(
\Gamma_{j}\right)  }^{2}\right)  ^{1/2}\times\\
&  \qquad\quad\times\left(  \Lambda_{j}^{2}\left\Vert s^{-1/2}\varphi
_{\operatorname*{D};j}\right\Vert _{H^{1/2}\left(  \Gamma_{j}\right)  }%
^{2}+\left\Vert \varphi_{\operatorname*{N};j}\right\Vert _{H^{-1/2}\left(
\Gamma_{j}\right)  }^{2}\right)  ^{1/2}\\
&  \qquad\leq {C''}\frac{1+\Lambda_{j}}{\lambda_{j}}\frac{\left\vert s\right\vert
^{2}}{\operatorname{Re}s}\left\Vert
\mbox{\boldmath$ \psi$}%
_{j}\right\Vert _{\mathbf{X}_{j}}\left\Vert
\mbox{\boldmath$ \phi$}%
_{j}\right\Vert _{\mathbf{X}_{j}}.
\end{align*}
\end{proof}

A summation of the local coercivity estimates (of the local continuity
estimates, resp.) over all subdomains leads to the following global coercivity
(global continuity, resp.).

\begin{corollary}
  Let Assumption \ref{Acoeff} be satisfied. The sesquilinear form
  \begin{equation*}
    \left\langle
      \mathsf{C}\left(  s\right)  \cdot,\overline{\cdot}\right\rangle _{\mathbb{X}%
    }:\mathbb{X}\left(  \mathcal{P}_{\Omega}\right)  \times\mathbb{X}\left(
      \mathcal{P}_{\Omega}\right)  \rightarrow\mathbb{C}
  \end{equation*}
  is coercive: for any $%
\mbox{\boldmath$ \psi$}%
\in\mathbb{X}\left(  \mathcal{P}_{\Omega}\right)  $ it holds%
\begin{equation}
\operatorname{Re}\left\langle \mathsf{C}\left(  s\right)
\mbox{\boldmath$ \psi$}%
,\overline{%
\mbox{\boldmath$ \psi$}%
}\right\rangle _{\mathbb{X}}\geq c\frac{\lambda}{1+\Lambda^{2}}\frac
{\operatorname{Re}s}{\left\vert s\right\vert ^{2}}\left\Vert
\mbox{\boldmath$ \psi$}%
\right\Vert _{\mathbb{X}}^{2}; \label{CaldCoerc}%
\end{equation}
and continuous: for any $%
\mbox{\boldmath$ \psi$}%
,%
\mbox{\boldmath$ \phi$}%
\in\mathbb{X}\left(  \mathcal{P}_{\Omega}\right)  $ it holds%
\begin{equation}
\left\vert \left\langle \mathsf{C}\left(  s\right)
\mbox{\boldmath$ \psi$}%
,\overline{%
\mbox{\boldmath$ \phi$}%
}\right\rangle _{\mathbb{X}}\right\vert \leq C\frac{1+\Lambda}{\lambda}%
\frac{\left\vert s\right\vert ^{2}}{\operatorname{Re}s}\left\Vert
\mbox{\boldmath$ \psi$}%
\right\Vert _{\mathbb{X}}\left\Vert
\mbox{\boldmath$ \phi$}%
\right\Vert _{\mathbb{X}} \label{Caldcont}%
\end{equation}
with $\lambda:=\min_{1\leq j\leq n_{\Omega}}\lambda_{j}$ and $\Lambda
:=\max_{1\leq j\leq n_{\Omega}}\Lambda_{j}$.
\end{corollary}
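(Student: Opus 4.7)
The plan is to reduce the global bounds to the already established local bounds of the previous lemma, exploiting the block-diagonal structure $\mathsf{C}(s)=\operatorname{diag}[\mathsf{C}_j(s)]$ and the additive structure of both the norm $\|\cdot\|_{\mathbb{X}}$ and the pairing $\langle\cdot,\cdot\rangle_{\mathbb{X}}$ defined in \eqref{bilixfull}.

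First, I would observe that, for $\boldsymbol{\psi}=(\boldsymbol{\psi}_j)_{j=1}^{n_{\Omega}}$ and $\boldsymbol{\phi}=(\boldsymbol{\phi}_j)_{j=1}^{n_{\Omega}}$ in $\mathbb{X}(\mathcal{P}_{\Omega})$, the block-diagonality of $\mathsf{C}(s)$ together with Definition~\ref{DefMT} and \eqref{bilixfull} gives the decomposition
\begin{equation*}
  \left\langle \mathsf{C}(s)\boldsymbol{\psi},\overline{\boldsymbol{\phi}}\right\rangle_{\mathbb{X}}
  = \sum_{j=1}^{n_{\Omega}}\left\langle \mathsf{C}_j(s)\boldsymbol{\psi}_j,\overline{\boldsymbol{\phi}_j}\right\rangle_{\mathbf{X}_j},
  \qquad
  \left\Vert\boldsymbol{\psi}\right\Vert_{\mathbb{X}}^{2}=\sum_{j=1}^{n_{\Omega}}\left\Vert\boldsymbol{\psi}_j\right\Vert_{\mathbf{X}_j}^{2}.
\end{equation*}

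For the coercivity estimate \eqref{CaldCoerc}, I would take real parts in the decomposition (with $\boldsymbol{\phi}=\boldsymbol{\psi}$), apply the local coercivity from the preceding lemma term by term, and then monotonize the subdomain-dependent constants using $\lambda_j\geq\lambda$ and $1+\Lambda_j^{2}\leq 1+\Lambda^{2}$. This yields
\begin{equation*}
  \operatorname{Re}\left\langle \mathsf{C}(s)\boldsymbol{\psi},\overline{\boldsymbol{\psi}}\right\rangle_{\mathbb{X}}
  \geq c\,\frac{\operatorname{Re}s}{|s|^{2}}\sum_{j=1}^{n_{\Omega}}\frac{\lambda_j}{1+\Lambda_j^{2}}\left\Vert\boldsymbol{\psi}_j\right\Vert_{\mathbf{X}_j}^{2}
  \geq c\,\frac{\lambda}{1+\Lambda^{2}}\,\frac{\operatorname{Re}s}{|s|^{2}}\left\Vert\boldsymbol{\psi}\right\Vert_{\mathbb{X}}^{2},
\end{equation*}
which is exactly \eqref{CaldCoerc}.

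For the continuity estimate \eqref{Caldcont}, I would likewise apply the local continuity bound of the preceding lemma to each summand and then monotonize the constants using $\lambda_j\geq\lambda$ and $1+\Lambda_j\leq 1+\Lambda$, yielding
\begin{equation*}
  \left|\left\langle \mathsf{C}(s)\boldsymbol{\psi},\overline{\boldsymbol{\phi}}\right\rangle_{\mathbb{X}}\right|
  \leq C\,\frac{1+\Lambda}{\lambda}\,\frac{|s|^{2}}{\operatorname{Re}s}\sum_{j=1}^{n_{\Omega}}\left\Vert\boldsymbol{\psi}_j\right\Vert_{\mathbf{X}_j}\left\Vert\boldsymbol{\phi}_j\right\Vert_{\mathbf{X}_j}.
\end{equation*}
A discrete Cauchy--Schwarz inequality on the $j$-sum then produces the product $\|\boldsymbol{\psi}\|_{\mathbb{X}}\|\boldsymbol{\phi}\|_{\mathbb{X}}$, finishing the proof.

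There is no genuine obstacle here: once the block-diagonal decomposition is written down, the global estimates are routine consequences of the local ones, and the only book-keeping is the uniform replacement of $\lambda_j$, $\Lambda_j$ by the global extrema $\lambda$, $\Lambda$.
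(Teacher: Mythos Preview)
Your proposal is correct and matches the paper's approach exactly: the paper states in the sentence preceding the corollary that ``A summation of the local coercivity estimates (of the local continuity estimates, resp.) over all subdomains leads to the following global coercivity (global continuity, resp.)'' and gives no further argument. Your write-up simply spells out this summation, the monotonization of the constants $\lambda_j,\Lambda_j$ to $\lambda,\Lambda$, and the discrete Cauchy--Schwarz step for continuity, which is precisely what is intended.
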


We have collected all prerequisites to prove the well-posedness of the non-local
variational problem on the skeleton (\ref{singskeleq}) in single trace spaces.

\begin{theorem}\label{ThmWellPosedSkeletonEq}
  Let Assumption \ref{Acoeff} be satisfied. The
  sesquilinear form
  \begin{equation*}
    \left(  c\left(  s\right)  \left(  \cdot,\cdot\right)
      -\frac{1}{2}\left\langle \cdot,\overline{\cdot}\right\rangle _{\mathbb{X}%
      }\right)  :\mathbb{X}_{0}^{\operatorname{single}}\left(  \mathcal{P}_{\Omega
      }\right)  \times\mathbb{X}_{0}^{\operatorname{single}}\left(  \mathcal{P}%
      _{\Omega}\right)  \rightarrow\mathbb{C}
  \end{equation*}
  is coercive and continuous: for any $\mbox{\boldmath$ \alpha$}
  \in\mathbb{X}_{0}^{\operatorname{single}}\left( \mathcal{P}_{\Omega}\right)$ and
  $\mbox{\boldmath$ \psi$}, \mbox{\boldmath$ \phi$} \in\mathbb{X}\left(
    \mathcal{P}_{\Omega}\right) $ holds true
  \rhc{$\left\langle \boldmath{\alpha},\overline{\boldmath{\alpha}}\right\rangle
    _{\mathbb{X}}=0$}, and
\begin{align*}
\operatorname{Re}\left(  c\left(  s\right)  \left(
\mbox{\boldmath$ \alpha$}%
,%
\mbox{\boldmath$ \alpha$}%
\right) \right)   &  \geq c\frac{\lambda}{1+\Lambda^{2}%
}\frac{\operatorname{Re}s}{\left\vert s\right\vert ^{2}}\left\Vert
\mbox{\boldmath$ \alpha$}%
\right\Vert _{\mathbb{X}}^{2},\\
  \abs{c(s) \left(\boldsymbol{\psi}, \boldsymbol{\phi} \right) -
  \frac{1}{2} \sesquilinear{\boldsymbol{\psi}}{\boldsymbol{\phi}}{\mathbb{X}}}  &  \leq \left(\frac{1}{2} + C \frac{1+\Lambda}{\lambda} \frac{\abs{s}^{2}}{\operatorname{Re}s}\right) \norm{\boldsymbol{\psi}}{\mathbb{X}} \norm{\boldsymbol{\phi}}{\mathbb{X}}.
\end{align*}
For any $%
\mbox{\boldmath$ \beta$}%
\in\mathbb{X}\left(  \mathcal{P}_{\Omega}\right)  $, the variational problem
(\ref{singskeleq}) has a solution $\mathbf{u}^{\operatorname{single}}%
\in\mathbb{X}_{0}^{\operatorname{single}}\left(  \mathcal{P}_{\Omega}\right)
$ which is unique and satisfies%
\begin{equation}
\left\Vert \mathbf{u}^{\operatorname{single}}\right\Vert _{\mathbb{X}}\leq
C\frac{\left\vert s\right\vert ^{9/2}}{\left(  \operatorname{Re}s\right)
^{2}}\left\Vert
\mbox{\boldmath$ \beta$}%
\right\Vert _{\mathbb{X}}, \label{wellposednessbound}%
\end{equation}
where $C$ only depends on $\lambda$, $\Lambda$, $s_{0}$, and on the domain
$\Omega$ via trace estimates.
\end{theorem}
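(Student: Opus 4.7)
The plan is to establish the two properties of the bilinear form $c(s)(\cdot,\cdot)-\tfrac12\langle\cdot,\overline{\cdot}\rangle_{\mathbb{X}}$ on $\mathbb{X}_0^{\operatorname{single}}(\mathcal{P}_\Omega)$ by reducing them to the already-proven Calderón coercivity (\ref{CaldCoerc}) and continuity (\ref{Caldcont}), and then to read off well-posedness together with the $s$-explicit stability bound via Lax--Milgram.

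The crucial first step is the algebraic identity $\langle\boldsymbol{\alpha},\overline{\boldsymbol{\alpha}}\rangle_{\mathbb{X}}=0$ for $\boldsymbol{\alpha}\in\mathbb{X}_0^{\operatorname{single}}$. By the very definition (\ref{defXsingle}) there exist $v\in H^1(\Omega)$ and $\mathbf{w}\in\mathbf{H}(\Omega,\operatorname{div})$ with $\alpha_{\operatorname{D};j}=\gamma_{\operatorname{D};j}v$ and $\alpha_{\operatorname{N};j}=\gamma_{\mathbf{n};j}\mathbf{w}$. A subdomain-wise Green's formula, summed over $j$, collapses the inner interface contributions because adjacent normals point in opposite directions and $v,\mathbf{w}$ are globally defined, leaving
\begin{equation*}
\sum_{j=1}^{n_\Omega}\langle\gamma_{\operatorname{D};j}v,\gamma_{\mathbf{n};j}\overline{\mathbf{w}}\rangle_{\Gamma_j}=\int_\Omega\bigl(v\operatorname{div}\overline{\mathbf{w}}+\nabla v\cdot\overline{\mathbf{w}}\bigr)=\langle\gamma_{\operatorname{D}}v,\gamma_{\mathbf{n}}\overline{\mathbf{w}}\rangle_{\partial\Omega}.
\end{equation*}
The remaining boundary pairing vanishes by the homogeneous conditions encoded in $\mathbb{X}_0^{\operatorname{single}}$: on $\Gamma_{\operatorname{D}}$ the factor $\gamma_{\operatorname{D}}v$ is zero and on $\Gamma_{\operatorname{N}}$ the factor $\gamma_{\mathbf{n}}\mathbf{w}$ is zero, and $\partial\Omega=\Gamma_{\operatorname{D}}\cup\Gamma_{\operatorname{N}}$. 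Adding the conjugate-swapped summand from the definition (\ref{iXj}) of $\langle\cdot,\cdot\rangle_{\mathbf{X}_j}$ yields the claim.

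Given this identity, on the single-trace space the form $c(s)(\boldsymbol{\alpha},\boldsymbol{\alpha})-\tfrac12\langle\boldsymbol{\alpha},\overline{\boldsymbol{\alpha}}\rangle_{\mathbb{X}}$ reduces to $c(s)(\boldsymbol{\alpha},\boldsymbol{\alpha})$, so the coercivity estimate is just (\ref{CaldCoerc}) restricted to $\mathbb{X}_0^{\operatorname{single}}\subset\mathbb{X}$. Continuity on $\mathbb{X}\times\mathbb{X}$ is a one-line triangle inequality, using (\ref{Caldcont}) for the $c(s)$-term and $|\langle\boldsymbol{\psi},\overline{\boldsymbol{\phi}}\rangle_{\mathbb{X}}|\leq\|\boldsymbol{\psi}\|_{\mathbb{X}}\|\boldsymbol{\phi}\|_{\mathbb{X}}$ from a Cauchy--Schwarz applied to each $H^{1/2}$--$H^{-1/2}$ duality pairing.

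With coercivity and continuity in hand, the Lax--Milgram theorem on the closed subspace $\mathbb{X}_0^{\operatorname{single}}\subset\mathbb{X}$ delivers existence and uniqueness of $\mathbf{u}^{\operatorname{single}}$. For the quantitative bound I multiply three $s$-dependent factors: the inverse coercivity constant contributes $|s|^2/\operatorname{Re} s$; estimating the right-hand side functional against a test function via the continuity bound contributes another $|s|^2/\operatorname{Re} s$ acting on $\|\boldsymbol{\beta}(s)\|_{\mathbb{X}}$; and the definition of $\boldsymbol{\beta}(s)$ together with $|s|\geq s_0$ gives $\|\boldsymbol{\beta}(s)\|_{\mathbb{X}}\leq C|s|^{1/2}\|\boldsymbol{\beta}\|_{\mathbb{X}}$. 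The product reproduces exactly $|s|^{9/2}/(\operatorname{Re} s)^2$. The only genuinely non-routine step is the pairing identity of paragraph two, where one must exploit both the global regularity ($v\in H^1(\Omega)$, $\mathbf{w}\in\mathbf{H}(\Omega,\operatorname{div})$) provided by the single-trace structure and the homogeneous boundary conditions on $\partial\Omega$; without either ingredient, the sign-cancellation needed to convert an interface sum into a vanishing boundary pairing would fail.
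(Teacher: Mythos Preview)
Your proof is correct and follows essentially the same route as the paper's: the paper obtains $\langle\boldsymbol{\alpha},\overline{\boldsymbol{\alpha}}\rangle_{\mathbb{X}}=0$ by invoking the self-polarity of the single-trace space from \cite[Lem.~4.1]{Hiptmair_multiple_trace} and \cite[Rem.~55]{ClHiptJH}, whereas you spell out the underlying divergence-theorem argument with the homogeneous boundary conditions; thereafter both proofs reduce coercivity and continuity to (\ref{CaldCoerc}) and (\ref{Caldcont}) and derive the $|s|^{9/2}/(\operatorname{Re}s)^2$ bound from Lax--Milgram in the same way.
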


%
\begin{proof}
Let $%
\mbox{\boldmath$ \alpha$}%
=\left(
\mbox{\boldmath$ \alpha$}%
_{j}\right)  _{j=1}^{n_{\Omega}}\in\mathbb{X}_{0}^{\operatorname{single}%
}\left(  \mathcal{P}_{\Omega}\right)  $ with $%
\mbox{\boldmath$ \alpha$}%
_{j}=\left(  \alpha_{\operatorname*{D};j},\alpha_{\operatorname*{N};j}\right)
$ and\\
$%
\mbox{\boldmath$ \phi$}%
=\left(
\mbox{\boldmath$ \phi$}%
_{j}\right)  _{j=1}^{n_{\Omega}},%
\mbox{\boldmath$ \psi$}%
=\left(
\mbox{\boldmath$ \psi$}%
_{j}\right)  _{j=1}^{n_{\Omega}}\in\mathbb{X}\left(  \mathcal{P}_{\Omega
}\right)  $ with $%
\mbox{\boldmath$ \psi$}%
_{j}=\left(  \psi_{\operatorname*{D};j},\psi_{\operatorname*{N};j}\right)  $
and $%
\mbox{\boldmath$ \phi$}%
_{j}=\left(  \varphi_{\operatorname*{D};j},\varphi_{\operatorname*{N}%
;j}\right)  $. Then%
\[
\operatorname{Re}\left(  c\left(  s\right)  \left(
\mbox{\boldmath$ \alpha$}%
,%
\mbox{\boldmath$ \alpha$}%
\right)  -\frac{1}{2}\left\langle
\mbox{\boldmath$ \alpha$}%
,\overline{%
\mbox{\boldmath$ \alpha$}%
}\right\rangle _{\mathbb{X}}\right)  =\operatorname{Re}c\left(  s\right)
\left(
\mbox{\boldmath$ \alpha$}%
,%
\mbox{\boldmath$ \alpha$}%
\right)
\]
owing to the self-polarity of the single trace space, see \cite[Lem.
4.1]{Hiptmair_multiple_trace}, \cite[Remark~55]{ClHiptJH}. Thus, the coercivity estimate follows
from (\ref{CaldCoerc}):%
\[
\operatorname{Re}\left(  c\left(  s\right)  \left(
\mbox{\boldmath$ \alpha$}%
,%
\mbox{\boldmath$ \alpha$}%
\right)  -\frac{1}{2}\left\langle
\mbox{\boldmath$ \alpha$}%
,\overline{%
\mbox{\boldmath$ \alpha$}%
}\right\rangle _{\mathbb{X}}\right)  \geq c\frac{\lambda}{1+\Lambda^{2}}%
\frac{\operatorname{Re}s}{\left\vert s\right\vert ^{2}}\left\Vert
\mbox{\boldmath$ \alpha$}%
\right\Vert _{\mathbb{X}}^{2}.
\]
The continuity estimate follows by combining (\ref{Caldcont}) with%
\begin{align*}
\left\vert \left\langle
\mbox{\boldmath$ \psi$}%
,\overline{%
\mbox{\boldmath$ \phi$}%
}\right\rangle _{\mathbb{X}}\right\vert  &  \leq\sum_{1\leq j\leq n_{\Omega}%
}\left\vert \left\langle \psi_{\operatorname*{D};j},\overline{\varphi
_{\operatorname*{N};j}}\right\rangle _{\Gamma_{j}}+\left\langle \psi
_{\operatorname*{N};j},\overline{\varphi_{\operatorname*{D};j}}\right\rangle
_{\Gamma_{j}}\right\vert \\
&  \leq\sum_{1\leq j\leq n_{\Omega}}\left(  \left\Vert \psi_{\operatorname*{D}%
;j}\right\Vert _{H^{1/2}\left(  \Gamma_{j}\right)  }\left\Vert \varphi
_{\operatorname*{N};j}\right\Vert _{H^{-1/2}\left(  \Gamma_{j}\right)
  }\right.\\
                                         & \quad+ \left.
                                           \left\Vert \psi_{\operatorname*{N};j}\right\Vert _{H^{-1/2}\left(
\Gamma_{j}\right)  }\left\Vert \varphi_{\operatorname*{D};j}\right\Vert
_{H^{1/2}\left(  \Gamma_{j}\right)  }\right) \\
&  \leq\sum_{1\leq j\leq n_{\Omega}}\left\Vert
\mbox{\boldmath$ \psi$}%
_{j}\right\Vert _{\mathbf{X}_{j}}\left\Vert
\mbox{\boldmath$ \phi$}%
_{j}\right\Vert _{\mathbf{X}_{j}}\leq\left\Vert
\mbox{\boldmath$ \psi$}%
\right\Vert _{\mathbb{X}}\left\Vert
\mbox{\boldmath$ \phi$}%
\right\Vert _{\mathbb{X}}.
\end{align*}
In particular, the continuity of
$\left( c\left( s\right) \left( \cdot,\cdot\right) -\frac{1}{2}\left\langle
    \cdot,\overline{\cdot}\right\rangle _{\mathbb{X}%
  }\right) $ implies that for any $\mbox{\boldmath$ \beta$}\in\mathbb{X}\left(
  \mathcal{P}_{\Omega}\right) $ the form $\left( c\left(s\right)
  \left(\mbox{\boldmath$ \beta$} \left( s\right) ,\cdot\right)
  -\frac{1}{2}\left\langle\mbox{\boldmath$ \beta$} \left( s\right)
    ,\overline{\cdot}\right\rangle _{\mathbb{X}}\right)
:\mathbb{X}_{0}^{\operatorname{single}}\left( \mathcal{P}_{\Omega}\right)
\rightarrow\mathbb{C}$ defines an anti-linear operator with upper bound
$\left.  \left( \frac{1}{2}+C\frac{1+\Lambda}{\lambda}\frac{\left\vert s\right\vert
      ^{2}}{\operatorname{Re}s}\right) \left\vert s\right\vert ^{1/2}\left\Vert
    \mbox{\boldmath$ \beta$}\right\Vert _{\mathbb{X}}\right.  $ for its norm. By the
Lax-Milgram theorem we infer well-posedness of (\ref{singskeleq}) and the bound in
(\ref{wellposednessbound}).
\end{proof}

\rhc{%
\begin{remark}
    Our approach also paves the way for pursuing a multi-trace formulation
    as in \cite{ClHiptJH}; all the ingredients are available! We expect
    that the multi-trace formulation becomes well-posed and the resulting equations are well-suited for
    operator preconditioning. In this paper, we have focus on the single-trace formulation
    since it directly inherits the stability of the boundary value problem.
\end{remark}}

\section{Conclusion\label{SecConclusion}}

In this paper, we have considered acoustic transmission problems with mixed boundary
conditions, \rhc{variable coefficients and absorption}. We have developed a
general approach to transform these equations to non-local skeleton equations in such a
way that the resulting variational form is continuous and coercive so that well-posedness
follows by the Lax-Milgram theorem. The transformation is based on Green's representation
formula involving single and double layer potentials which are defined as solutions of
some variational full space problems without relying on the explicit knowledge of the
Green's function. The paper can be regarded as a generalization of \cite{EbFlHiSa} by
allowing for unbounded domains (full space/half space) and variable coefficients in the
subdomains.

In contrast to other methods such as the \textit{indirect} method of boundary integral
equations (see, e.g., \cite[Chap. 3.4.1]{SauterSchwab2010}) the well-posedness of the
non-local skeleton (integral) equation follows directly from the well-posedness of the
auxiliary variational problems in full space.


Another important contribution of this work is the completely $s$-explicit nature of all
estimates, \rhc{$s$ the frequency parameter}. Its significance is due to the possibility
to apply our boundary integral equation method to transform the space-time wave
transmission problem (in analogy to (\ref{generalTP})) to an integro-differential equation
which may serve as a starting point for its discretization by convolution quadrature. The
well-posedness of this integro-differential equation follows from the coercivity and
continuity of the variational skeleton equation (\ref{singskeleq}) via \textit{operational
  calculus}; for details we refer to \cite{EbFlHiSa}, \cite{banjai_coupling},
\cite{sayas_book}, \cite{banjai_sayas}. We also mention that the restriction to mixed
Dirichlet and Neumann boundary conditions was merely done to reduce technicalities:
Dirichlet-to-Neumann boundary conditions and impedance conditions can be incorporated into
the variational skeleton equation following the approach in \cite{EbFlHiSa}.

\appendix

\section{Proof of Lemma \ref{Lem_s_explicit_operator_est}\label{AppProof}}

The proof of Lemma \ref{Lem_s_explicit_operator_est} is an adaptation of the
arguments in \cite[Prop. 16, 19]{LaSa2009} to our setting; see also \cite[Lem.
5.2]{Barton_potential}. In this appendix, we present the proof to show that
the known arguments apply to our general setting.

\begin{proof}[of Lemma \ref{Lem_s_explicit_operator_est}.] Let $\varphi\in
H^{-1/2}\left(  \Gamma\right)  $ and set $u:=\mathsf{S}_{j}\left(  s\right)
\varphi$. The jump relations for the single layer potential (cf.
(\ref{jumprelSLP})) imply $\gamma_{\operatorname*{D};j}\left(  s\right)
u=\mathsf{V}\left(  s\right)  \varphi$ and $\left[  u\right]
_{\operatorname*{N};j}^{\operatorname*{ext}}\left(  s\right)  =-\varphi$.
Then, we have%
\begin{align*}
\operatorname{Re}\left\langle \varphi,\overline{\mathsf{V}_{j}\left(
s\right)  \varphi}\right\rangle _{\Gamma_{j}}  &  =\operatorname{Re}%
\left\langle -\left[  u\right]  _{\operatorname*{N};j}^{\operatorname*{ext}%
}\left(  s\right)  ,\gamma_{\operatorname*{D};j}\left(  \overline{s}\right)
\overline{u}\right\rangle _{\Gamma_{j}}\\
&  \overset{\text{(\ref{gammadjs})}}{=}\operatorname{Re}\left(  \left(
\frac{\overline{s}}{s}\right)  ^{1/2}\left\langle -\left[  u\right]
_{\operatorname*{N};j}^{\operatorname*{ext}}\left(  s\right)  ,\gamma
_{\operatorname*{D};j}\left(  s\right)  \overline{u}\right\rangle _{\Gamma
_{j}}\right)  .
\end{align*}
We employ (\ref{ljjumprel}) with $v=w=u$ and $\lambda\left(  \cdot\right)  $,
$\lambda_{j}$ as in Lem. \ref{LemLaxMilgram} to obtain (cf. (\ref{ljcoercive}%
))%
\begin{align*}
\operatorname{Re} & \left(  \left(  \frac{\overline{s}}{s}\right)  ^{1/2}%
\left\langle -\left[  u\right]  _{\operatorname*{N};j}^{\operatorname*{ext}%
}\left(  s\right)  ,\gamma_{\operatorname*{D};j}\left(  s\right)  \overline
                    {u}\right\rangle _{\Gamma_{j}}\right)
                    =\operatorname{Re}\left(  \left(
\frac{\overline{s}}{s}\right)  ^{1/2}\ell_{j}\left(  s\right)  \left(
u,u\right)  \right) \\
&  =\frac{\operatorname{Re}s}{\left\vert s\right\vert }\left(  \left\langle
\mathbb{A}_{j}^{\operatorname*{ext}}\nabla u,\nabla\overline{u}\right\rangle
_{\mathbb{R}^{3}}+\left\vert s\right\vert ^{2}\left\langle p_{j}%
^{\operatorname*{ext}}u,\overline{u}\right\rangle _{\mathbb{R}^{3}}\right) \\
&  \geq\frac{\operatorname{Re}s}{\left\vert s\right\vert }\left(
\lambda\left(  \mathbb{A}_{j}^{\operatorname*{ext}}\right)  \left\vert \nabla
u\right\vert _{\mathbf{L}^{2}\left(  \mathbb{R}^{3}\right)  }^{2}%
+\lambda\left(  p_{j}^{\operatorname*{ext}}\right)  \left\vert s\right\vert
^{2}\left\Vert u\right\Vert _{L^{2}\left(  \mathbb{R}^{3}\right)  }^{2}\right)
\\
&  \geq\frac{\operatorname{Re}s}{\left\vert s\right\vert }\lambda
_{j}\left\Vert u\right\Vert _{H^{1}\left(  \mathbb{R}^{3}\right)  ;s}^{2}.
\end{align*}
Finally, the coercivity estimate (\ref{slbieboundsa}) for $\mathsf{V}\left(
s\right)  $ follows from (\ref{conormaltraceest})

Next, we prove the continuity of the single layer operator. For $\varphi\in
H^{-1/2}\left(  \Gamma_{j}\right)  $, let $v:=\mathsf{S}_{j}\left(  s\right)
\varphi$. Then (\ref{SLPest}) follows from%
\begin{align*}
\frac{\operatorname{Re}s}{\left\vert s\right\vert }\lambda_{j}\left\Vert
v\right\Vert _{H^{1}\left(  \mathbb{R}^{3}\right)  ;s}^{2}  &  \leq
\operatorname{Re}\left\langle \varphi,\overline{\mathsf{V}_{j}\left(
s\right)  \varphi}\right\rangle _{\Gamma_{j}}\leq\left\Vert \varphi\right\Vert
_{H^{-1/2}\left(  \Gamma_{j}\right)  }\left\Vert \gamma_{\operatorname*{D}%
;j}\left(  s\right)  v\right\Vert _{H^{1/2}\left(  \Gamma_{j}\right)  }\\
&  \overset{\text{(\ref{scaledtraceest})}}{\leq}C\left\vert s\right\vert
^{1/2}\left\Vert \varphi\right\Vert _{H^{-1/2}\left(  \Gamma_{j}\right)
}\left\Vert v\right\Vert _{H^{1}\left(  \mathbb{R}^{3}\right)  ;s}.
\end{align*}
The continuity (\ref{slbieboundsb}) of $\mathsf{V}\left(  s\right)  $ is a
direct consequence of the estimate%
\begin{align*}
\left\vert \left\langle \mathsf{V}_{j}\left(  s\right)  \varphi,\overline
{\psi}\right\rangle _{\Gamma_{j}}\right\vert  &  =\left\vert \left\langle
\gamma_{\operatorname*{D};j}\left(  s\right)  \mathsf{S}_{j}\left(  s\right)
\varphi,\overline{\psi}\right\rangle _{\Gamma_{j}}\right\vert \leq\left\Vert
\gamma_{\operatorname*{D};j}\left(  s\right)  \mathsf{S}_{j}\left(  s\right)
\varphi\right\Vert _{H^{1/2}\left(  \Gamma_{j}\right)  }\left\Vert
\psi\right\Vert _{H^{-1/2}\left(  \Gamma_{j}\right)  }\\
&  \leq C\left\vert s\right\vert ^{1/2}\left\Vert \mathsf{S}_{j}\left(
s\right)  \varphi\right\Vert _{H^{1}\left(  \mathbb{R}^{3}\right)
  ;s}\left\Vert \psi\right\Vert _{H^{-1/2}\left(  \Gamma_{j}\right)  }\\
                                              &
                                                \leq
C\frac{\left\vert s\right\vert ^{2}}{\lambda_{j}\operatorname{Re}s}\left\Vert
\varphi\right\Vert _{H^{-1/2}\left(  \Gamma_{j}\right)  }\left\Vert
\psi\right\Vert _{H^{-1/2}\left(  \Gamma_{j}\right)  }.
\end{align*}
Finally, the dual double layer boundary integral operator\emph{ }%
$\mathsf{K}_{j}^{\prime}\left(  s\right)  $ can be estimated by using the
mapping properties of $\mathsf{S}_{j}$ and $\gamma_{\operatorname*{N}%
,j}^{\operatorname*{ext}}$. Let $v^{\sigma}:=\left.  \left(  \mathsf{S}%
_{j}\left(  s\right)  \varphi\right)  \right\vert _{\Omega_{j}^{\sigma}}$,
$\sigma\in\left\{  +,-\right\}  $. Then, we have for all $\varphi\in
H^{-1/2}\left(  \Gamma_{j}\right)  :$%
\begin{align*}
\left\Vert \mathsf{K}_{j}^{\prime}\left(  s\right)  \varphi\right\Vert
_{H^{-1/2}\left(  \Gamma_{j}\right)  }  &  =\left\Vert \{\!\!\{\mathsf{S}%
_{j}\left(  s\right)  \varphi\}\!\!\}_{\operatorname*{N};j}%
^{\operatorname*{ext}}\left(  s\right)  \right\Vert _{H^{-1/2}\left(
\Gamma_{j}\right)  }\leq\sum_{\sigma\in\left\{  +,-\right\}  }\left\Vert
\gamma_{\operatorname*{N};j}^{\operatorname*{ext},\sigma}\left(  s\right)
v^{\sigma}\right\Vert _{H^{-1/2}\left(  \Gamma_{j}\right)  }\\
&  \overset{\text{(\ref{conormaltraceest})}}{\leq}C\Lambda_{j}\sum_{\sigma
\in\left\{  +,-\right\}  }\left\Vert v^{\sigma}\right\Vert _{H^{1}\left(
\Omega_{j}^{\sigma}\right)  ;s}\overset{\text{(\ref{SLPest})}}{\leq}%
C\frac{\Lambda_{j}}{\lambda_{j}}\frac{\left\vert s\right\vert ^{3/2}%
}{\operatorname{Re}s}\left\Vert \varphi\right\Vert _{H^{-1/2}\left(
\Gamma_{j}\right)  }.
\end{align*}

Next, we investigate the mapping properties of the operators related to the
double layer potential and start with the coercivity estimate of
$\mathsf{W}_{j}\left(  s\right)  $. Let $\psi\in H^{1/2}\left(  \Gamma
_{j}\right)  $ and set $u:=\mathsf{D}_{j}\left(  s\right)  \psi$. The jump
relations for the double layer potentials (cf. (\ref{jumprelDLP})) imply
$\gamma_{\operatorname*{N};j}^{\operatorname*{ext}}\left(  s\right)
u=-\mathsf{W}_{j}\left(  s\right)  \psi$ and $\left[  u\right]
_{\operatorname*{D};j}\left(  s\right)  =\psi$. Then, we have%
\begin{align*}
\operatorname{Re}\left\langle \mathsf{W}_{j}\left(  s\right)  \psi
,\overline{\psi}\right\rangle _{\Gamma_{j}}  &  =\operatorname{Re}\left\langle
-\gamma_{\operatorname*{N};j}^{\operatorname*{ext}}\left(  s\right)  u,\left[
\overline{u}\right]  _{\operatorname*{D};j}\left(  \overline{s}\right)
\right\rangle _{\Gamma_{j}}\\
&  \overset{\text{(\ref{gammadjs})}}{=}\operatorname{Re}\left(  \left(
\frac{\overline{s}}{s}\right)  ^{1/2}\left\langle -\gamma_{\operatorname*{N}%
;j}^{\operatorname*{ext}}\left(  s\right)  u,\left[  \overline{u}\right]
_{\operatorname*{D};j}\left(  s\right)  \right\rangle _{\Gamma_{j}}\right)  .
\end{align*}
We employ (\ref{ljconjump}) with $v=w=u$ and $\lambda\left(  \cdot\right)  $,
$\lambda_{j}$ as in Lem. \ref{LemLaxMilgram} to obtain (cf. (\ref{ljcoercive}%
)) with $\mathbb{A}_{j}^{\sigma}:=\left.  \mathbb{A}_{j}^{\operatorname*{ext}%
}\right\vert _{\Omega_{j}^{\sigma}}$ and $p_{j}^{\sigma}:=\left.
p_{j}^{\operatorname*{ext}}\right\vert _{\Omega_{j}^{\sigma}}$, $\sigma
\in\left\{  +,-\right\}  $:%
\begin{align}
\operatorname{Re}\left\langle \mathsf{W}_{j}\left(  s\right)  \psi
,\overline{\psi}\right\rangle _{\Gamma_{j}}  &  =\operatorname{Re}\left(
\left(  \frac{\overline{s}}{s}\right)  ^{1/2}\sum_{\sigma\in\left\{
+,-\right\}  }\left(  \left\langle \mathbb{A}_{j}^{\sigma}\nabla u^{\sigma
},\nabla\overline{u^{\sigma}}\right\rangle _{\Omega_{j}^{\sigma}}%
+s^{2}\left\langle p_{j}^{\sigma}u^{\sigma},\overline{u^{\sigma}}\right\rangle
_{\Omega_{j}^{\sigma}}\right)  \right) \nonumber\\
&  =\frac{\operatorname{Re}s}{\left\vert s\right\vert }\sum_{\sigma\in\left\{
+,-\right\}  }\left(  \left\langle \mathbb{A}_{j}^{\sigma}\nabla u^{\sigma
},\nabla\overline{u^{\sigma}}\right\rangle _{\Omega_{j}^{\sigma}}+\left\vert
s\right\vert ^{2}\left\langle p_{j}^{\sigma}u^{\sigma},\overline{u^{\sigma}%
}\right\rangle _{\Omega_{j}^{\sigma}}\right) \nonumber\\
&  \geq\frac{\operatorname{Re}s}{\left\vert s\right\vert }\lambda_{j}%
\sum_{\sigma\in\left\{  +,-\right\}  }\left\Vert u^{\sigma}\right\Vert
_{H^{1}\left(  \Omega_{j}^{\sigma}\right)  ;s}^{2}=\frac{\operatorname{Re}%
s}{\left\vert s\right\vert }\lambda_{j}\left\Vert u\right\Vert _{H^{1}\left(
\mathbb{R}^{3}\backslash\Gamma_{j}\right)  ;s}^{2}. \label{jumplowerest}%
\end{align}
Thus, the coercivity relation (\ref{Wcoerc}) follows from the trace estimate
(cf. (\ref{scaledtraceest}))%
\begin{align*}
\left\Vert \psi\right\Vert _{H^{1/2}\left(  \Gamma_{j}\right)  }^{2}  &
=\left\Vert \left[  u\right]  _{\operatorname*{D};j}\left(  s\right)
\right\Vert _{H^{1/2}\left(  \Gamma_{j}\right)  }^{2}\leq\sum_{\sigma
\in\left\{  +,-\right\}  }\left\Vert \gamma_{\operatorname*{D};j}^{\sigma
}\left(  s\right)  u^{\sigma}\right\Vert _{H^{1/2}\left(  \Gamma_{j}\right)
}^{2}\\
&  \leq C\left\vert s\right\vert \sum_{\sigma\in\left\{  +,-\right\}
}\left\Vert u^{\sigma}\right\Vert _{H^{1}\left(  \Omega_{j}^{\sigma}\right)
;s}^{2}=C\left\vert s\right\vert \left\Vert u\right\Vert _{H^{1}\left(
\mathbb{R}^{3}\backslash\Gamma_{j}\right)  ;s}^{2}.
\end{align*}
Next, we prove the continuity of the double layer operator. For $\psi\in
H^{1/2}\left(  \Gamma_{j}\right)  $, let $u:=\mathsf{D}_{j}\left(  s\right)
\psi$. Then (\ref{DLPest}) follows from%
\begin{align*}
\frac{\operatorname{Re}s}{\left\vert s\right\vert }\lambda_{j}\left\Vert
u\right\Vert _{H^{1}\left(  \mathbb{R}^{3}\backslash\Gamma_{j}\right)
;s}^{2}  &  \leq\operatorname{Re}\left\langle \mathsf{W}_{j}\left(  s\right)
\psi,\overline{\psi}\right\rangle _{\Gamma_{j}}=\operatorname{Re}\left\langle
-\gamma_{\operatorname*{N};j}^{\operatorname*{ext}}\left(  s\right)
u,\overline{\psi}\right\rangle _{\Gamma_{j}}\\
&  \leq\left\Vert \gamma_{\operatorname*{N};j}^{\operatorname*{ext}}\left(
s\right)  u\right\Vert _{H^{-1/2}\left(  \Gamma_{j}\right)  }\left\Vert
\psi\right\Vert _{H^{1/2}\left(  \Gamma_{j}\right)  }\\
&  \overset{\text{(\ref{conormaltraceest})}}{\leq}C\Lambda_{j}\left\Vert
u\right\Vert _{H^{1}\left(  \mathbb{R}^{3}\backslash\Gamma_{j}\right)
;s}\left\Vert \psi\right\Vert _{H^{1/2}\left(  \Gamma_{j}\right)  }.
\end{align*}
The continuity estimates for the operators $\mathsf{W}_{j}\left(  s\right)  $
and $\mathsf{K}_{j}\left(  s\right)  $ follow from the combination of this and
the trace estimates (Lem. \ref{LemScaledTraces}).
\end{proof}

\subsection*{List of notations}
{In this article we prefer ``verbose'' notations conveying maximum information about
entities. We admit that this leads to lavishly adorned symbols, but enhanced precision is
worth this price.}

{As a convention, we denote scalar functions and spaces of scalar functions with italic
letters, vectors in $\mathbb{C}^{3}$ (tensors of order $1$) with bold letters, and
matrices in $\mathbb{C}^{3\times3}$ (tensors of order $2$) by
blackboard bold letters.}                                                                                                                         \\
{1c\small%
\begin{longtable}{p{0.26\textwidth}p{0.73\textwidth}}
  $\mathbb{R}_{>0}$\dotfill                                              & positive real numbers                                                 \\
  $\mathbb{C}_{>0}$\dotfill                                              & complex numbers with positive real part                               \\
  $\mathbb{R}_{\operatorname*{sym}}^{3\times3}$ \dotfill                 & symmetric $3\times3$ matrices                                         \\
  \modified{$\left\langle \cdot,\cdot\right\rangle,
  \left\langle \cdot,\cdot\right\rangle _{\omega}$} \dotfill             & bilinear form in $\mathbb{C}^{3}$ {see \S \ref{SubSecFuSp}} and
  duality pairing of a function space on a domain (or manifold) $\omega$ with its dual                                                           \\
$\mathbb{A}$\dotfill                                                     & tensor coefficient for transmission problem, see Rem.~\ref{RemAgiven} \\
  \modified{$\mathbb{A}_{j}^{\sigma},
    p_{j}^{\sigma}$}\dotfill                                             & coefficients on the subdomain $\Omega_{j}^{\sigma}$,
  $\sigma\in\left\{ +,-\right\} $, see \cref{Acoeff}, (\ref{defajpjpm})                                                                \\
  \modified{$\mathbb{A}_{j}^{\operatorname*{ext}},
    p_{j}^{\operatorname*{ext}}$} \dotfill                               & extension of the coefficients $\mathbb{A}_{j}^{\operatorname*{ext}},
  p_{j}^{-}$ to $\mathbb{R}^{3}$, see \cref{Acoeff}                                                                                              \\
  $\lambda\left( \mathbb{A}_{j}^{\operatorname*{ext}}\right) $,
  $\Lambda \left( \mathbb{A}_{j}^{\operatorname*{ext}}\right) $ \dotfill & lower and upper spectral
  bound of the tensor coefficient
  $\mathbb{A}_{j}^{\operatorname*{ext}}$, see (\ref{spectralbounds})                                                                             \\
  $\lambda\left( p_{j}^{\operatorname*{ext}}\right) $,
  $\Lambda\left(p_{j}^{\operatorname*{ext}}\right) $ \dotfill            & lower and upper bound of the
  coefficient $p_{j}^{\operatorname*{ext}}$, see (\ref{spectralbounds})                                                                          \\
  $\lambda_{j}$, $\Lambda_{j}$ \dotfill                                  & 
  $\min\left\{ \lambda\left( \mathbb{A}_{j}^{\operatorname*{ext}}\right) ,\lambda\left(
      p_{j}^{\operatorname*{ext}}\right) \right\} $,
  $\max\left\{ \Lambda\left( \mathbb{A}_{j}^{\operatorname*{ext}}\right) ,\Lambda\left(
      p_{j}^{\operatorname*{ext}}\right)  \right\}  $, see (\ref{deflambdaj})                                                                                                                                                                           \\
  $s$ \dotfill                                                                                                                                    & Laplace domain parameter (``wave number'') in $\mathbb{C}_{>0}$, see (\ref{Acoeff})                 \\
  $s_{0}$ \dotfill                                                                                                                                & lower bound of the modulus of $s$, see (\ref{Acoeff})                                               \\
  $\Omega$ \dotfill                                                                                                                               & bounded or unbounded domain in $\mathbb{R}^{3}$, see \S \ref{SubSecDiffOp}                          \\
  $\Omega_{j}=\Omega_{j}^{-}$, \dotfill                                                                                                           & subdomains of $\Omega$ ($1\leq j\leq n_{\Omega}$), see \S \ref{SubSecDiffOp}                        \\
  $\Omega_{j}^{+}$ \dotfill                                                                                                                       & exterior complement $\mathbb{R}^{3}\backslash\overline {\Omega_{j}^{-}}$, see \S \ref{SubSecDiffOp} \\
  $\omega\subset\subset\Omega$ \dotfill                                                                                                           & $\omega$ is compactly contained in $\Omega$, i.e., $\overline{\omega}\subset\Omega$,                \\
  $\Gamma$ \dotfill                                                                                                                               & boundary of $\Omega$; see \S \ref{SubSecDiffOp}                                                     \\
  $\Gamma_{j}$ \dotfill                                                                                                                           & boundary of $\Omega_{j}$; see \S \ref{SubSecDiffOp}                                                 \\
  $\Gamma_{j,k}$\dotfill                                                                                                                          & common boundary of $\Omega_{j}$ and $\Omega_{k}$; see \S \ref{SubSecDiffOp}                         \\
  $\Gamma_{\operatorname*{D}}$\dotfill                                                                                                            & part of $\Gamma$ where Dirichlet boundary conditions are imposed; see \S \ref{SubSecDiffOp}         \\
  $\Gamma_{\operatorname*{N}}$\dotfill                                                                                                            & part of $\Gamma$ where Neumann boundary conditions are imposed; see \S \ \ref{SubSecDiffOp}         \\
  $\mathcal{P}_{\Omega}$\dotfill                                                                                                                  & set of subdomains of $\Omega$; see \S \ref{SubSecDiffOp}                                            \\
  $\Sigma$\dotfill                                                                                                                                & skeleton of $\mathcal{P}_{\Omega}$, union of $\partial\Omega_{j}$, see \S  \ref{SubSecDiffOp}       \\
  $\mathbf{n}_{j}$ \dotfill                                                                                                                       & outward normal vector pointing from $\Omega_{j}^{-}$ to $\Omega_{j}^{+}$, see Prop. \ref{Proptrace} \\
  $C^{\infty}\left( \omega\right) $,
  $\mathbf{C}^{\infty}\left(\omega\right) $\dotfill                                                                                               & space of  infinitely differentiable functions and vector valued version                             \\
  $C_{0}^{\infty}\left(  \omega\right)  $,
  $\mathbf{C}_{0}^{\infty}\left(\omega\right)  $\dotfill                                                                                          & 
  $C_{0}^{\infty}\left(  \omega\right)  :=\left\{  u\in C^{\infty}\left(  \omega\right)  \mid\operatorname*{supp}u\subset
    \omega
  \right\}  $ with vector valued version $\mathbf{C}_{0}^{\infty}\left(\omega\right)  $                                                                                                                                                                 \\
  $\left(  L^{p}\left(  \omega\right),
    \left\Vert \cdot\right\Vert
    _{L^{p}\left(  \omega\right)  }\right)  $\dotfill                                                                                             & Lebesgue space for $1\leq
p\leq\infty$ with norm $\left\Vert \cdot\right\Vert _{L^{p}\left(
\omega\right)  }$; see \S \ref{SubSecFuSp}                                                                                                                                                                                                              \\
$\left(  \mathbf{L}^{p}(\omega)  ,\norm{\cdot}{\mathbf{L}^{p}(\omega)}\right) $\dotfill                                                                                        & $\mathbf{L}^{p}\left(
\omega\right)  :=L^{p}\left(  \omega\right)  ^{3}$ with norm $\left\Vert
\cdot\right\Vert _{\mathbf{L}^{p}\left(  \omega\right)  }$, see
\S \ref{SubSecFuSp}                                                                                                                                                                                                                                     \\ 
$\left(  \mathbb{L}^{p}\left(  \omega\right)  ,\left\Vert \cdot\right\Vert
_{\mathbb{L}^{p}\left(  \omega\right)  }\right)  $\dotfill                                                                                        & $\mathbb{L}^{p}\left(
\omega\right)  :=L^{p}\left(  \omega\right)  ^{3\times3}$ with norm
$\left\Vert \cdot\right\Vert _{\mathbb{L}^{p}\left(  \omega\right)  }$, see
\ref{SubSecFuSp}                                                                                   \\
\modified{$(\cdot,\cdot)_{L^{2}\left(  \omega\right)  }$,
$\left(\cdot,\cdot\right)  _{\mathbf{L}^{2}\left(  \omega\right)  }$},&\\
\modified{$\left(\cdot,\cdot\right)  _{\mathbb{L}^{2}\left(  \omega\right)  }$} \dotfill                      & $L^{2}\left(\omega\right)  $ scalar product in $L^{2}\left(  \Omega\right)  $,$\mathbf{L}^{2}\left(  \Omega\right)  $, $\mathbb{L}^{2}\left(  \Omega\right)
$                                                                                                                                                                    \\
$L_{>0}^{\infty}\left(  \omega,\mathbb{R}\right)  $\dotfill                                 & subset of $L^{\infty}\left(
  \omega\right)  $ of functions which are uniformly positive, see \S \ref{SubSecFuSp}                                                                                \\
$\mathbb{L}^{p}\left(  \omega,\mathbb{R}_{\operatorname*{sym}}^{3\times3}\right)  $\dotfill & subset
of $\mathbb{L}^{p}\left(  \omega\right)  $ of functions which map into the set
of symmetric $3\times3$ matrices; see \S \ref{SubSecFuSp}                                                                                                            \\
$\mathbb{L}_{>0}^{\infty}\left(  \omega,\mathbb{R}_{\operatorname*{sym}%
}^{3\times3}\right)  $\dotfill                                                              & subset of $\mathbb{L}^{\infty}\left(  \omega
,\mathbb{R}_{\operatorname*{sym}}^{3\times3}\right)  $ of functions which are
uniformly positive definite, see Def. \ref{DefH1omegaB}                                                                                                              \\
$H^{k}\left(  \omega\right)  $\dotfill                                                      & Sobolev space $W^{k,2}\left(  \omega\right) $, see \S \ref{SubSecFuSp} \\
$H_{0}^{k}(\omega),H^{-k}(\omega)$\dotfill                                                  & closure of smooth functions with compact support with respect to the $\norm{\cdot}{H^{k}(\omega)}$ norm (see \S \ref{SubSecFuSp})
and its dual space (see \S \ref{SubSecFuSp})                                                                                                                         \\
$H_{\operatorname*{loc}}^{k}\left(  \omega\right)  $\dotfill                                & Sobolev space of functions which
locally belong to $H^{k}\left(  \omega\right)  $; see \S \ref{SubSecFuSp}                                                                                            \\
$\left\Vert \cdot\right\Vert _{H^{1}\left(  \omega\right)  ;s}$,
$\left\Vert \cdot\right\Vert _{H^{-1}\left(  \mathbb{R}^{3}\right)  ;s}$\dotfill            & 
frequency-weighted Sobolev norm and its dual norm, see (\ref{fs_norm}),
(\ref{defdualnorm})                                                                                                                                                  \\
$\mathbf{H}\left(  \omega,\operatorname*{div}\right)  $ \dotfill                            & subspace of $\mathbf{L}^{2}\left(
  \omega\right)  $ of functions $\mathbf{v}$ satisfying $\operatorname*{div}\mathbf{v}\in
L^{2}\left(  \omega\right)  $, see (\ref{defHomegadiv})                                                                                                              \\
$\left(  H^{1}\left(  \omega,\mathbb{B}\right) ,\left\Vert \cdot\right\Vert_{H^{1}\left(  \omega,\mathbb{B}\right)  }\right)  $\dotfill                                & subspace of $H^{1}\left(
\omega\right)  $ of functions $v$ such that $\operatorname*{div} \left(  \mathbb{B}\nabla
v\right)  \in L^{2}\left(  \omega\right)  $ equipped with the graph norm; see Def. \ref{DefH1omegaB}                                                                 \\
$\mathbb{H}^{1}\left(  \mathcal{P}_{\Omega},\mathbb{A}\right)  $\dotfill                    & 
$\BIGOP{\times}_{j=1}^{n_{\Omega}}H^{1}\left(  \Omega_{j},\mathbb{A}_{j}^{-}\right)  $                                                                               \\
$H^{\alpha}\left(  \partial\omega\right)  $\dotfill                                         & Sobolev space on a closed manifold, see \S \ref{SubSecFuSp}            \\
$H^{\pm1/2}\left(  \Gamma_{j,k}\right)  $, $\tilde{H}^{\pm1/2}\left(\Gamma_{j,k}\right)  $
\dotfill                                                                                    & Sobolev spaces on manifolds with boundary; see
(\ref{deftildespaces})                                                                                                                                               \\
$\left(  \mathbf{X}_{j},\left\langle \cdot,\cdot\right\rangle _{\mathbf{X}%
_{j}},\left\Vert \cdot\right\Vert _{\mathbf{X}_{j}}\right)  $\dotfill                       & Sobolev space
$H^{1/2}\left(  \Gamma_{j}\right)  \times H^{-1/2}\left(  \Gamma_{j}\right)
$, equipped with bilinear form $\left\langle \cdot,\cdot\right\rangle
_{\mathbf{X}_{j}}$ and norm $\left\Vert \cdot\right\Vert _{\mathbf{X}_{j}}$,
see Def. \ref{DefMT}, (\ref{iXj}) \\
$\left(  \mathbb{X}\left(  \mathcal{P}_{\Omega}\right)  ,\left\langle
\cdot,\cdot\right\rangle _{\mathbb{X}},\left\Vert \cdot\right\Vert
_{\mathbb{X}}\right)  $\dotfill                                                             & Sobolev space $\mathbb{X}\left(  \mathcal{P}_{\Omega}\right)  :=\BIGOP{\times}_{j=1}^{n_{\Omega}}\mathbf{X}_{j}$, with bilinear form $\left\langle\cdot,\cdot\right\rangle _{\mathbb{X}}$ and norm $\left\Vert \cdot\right\Vert
_{\mathbb{X}}$, see Def. \ref{DefMT}, (\ref{bilixfull})                                                                             \\
$\mathbb{X}^{\operatorname{single}}\left(  \mathcal{P}_{\Omega}\right)  $\dotfill           & single traces space; see (\ref{defXsingle}) \\
$\mathbb{X}_{0}^{\operatorname{single}}\left(  \mathcal{P}_{\Omega}\right)  $\dotfill       & single
traces space with incorporated zero boundary conditions, see (\ref{defXsingle})                                                     \\
$\gamma_{\operatorname*{D};j}^{\sigma}$, $\gamma_{\operatorname*{D};j}$, $\gamma_{\operatorname*{D};j}^{\sigma}\left(  s\right)  $, $\gamma
_{\operatorname*{D};j}\left(  s\right)  $\dotfill                                           & one-sided and two-sided Dirichlet trace
operators and frequency scaled versions; see Prop. \ref{Proptrace}, (\ref{gammadjs})                                                \\
$\gamma_{\mathbf{n};j}^{\sigma}$, $\gamma_{\mathbf{n};j}$, $\gamma
_{\mathbf{n};j}^{\sigma}\left(  s\right)  $, $\gamma_{\mathbf{n};j}\left(
s\right)  $\dotfill                                                                         & one-sided and two-sided normal trace operators and frequency
scaled versions; see Prop. \ref{Proptrace}, (\ref{gammadjs})                                                                        \\
$\gamma_{\operatorname*{N};j}^{\sigma}$,
$\gamma_{\operatorname*{N};j}^{\operatorname*{ext},\sigma}$,
$\gamma_{\operatorname*{N};j}$, $\gamma_{\operatorname*{N};j}^{\operatorname*{ext}}$, $\gamma
_{\operatorname*{N};j}^{\sigma}\left(  s\right)  $, $\gamma_{\operatorname*{N}%
;j}^{\operatorname*{ext},\sigma}\left(  s\right)  $, $\gamma
_{\operatorname*{N};j}\left(  s\right)  $, $\gamma_{\operatorname*{N}%
;j}^{\operatorname*{ext}}\left(  s\right)  $\dotfill                                  & one-sided and two-sided co-normal
derivatives and frequency scaled versions, see Prop. \ref{Proptrace}%
, (\ref{gammadjs}), Notation \ref{NotBAext} \\
$\boldsymbol{\gamma}_{\operatorname*{C};j}^{\sigma}$,
$\boldsymbol{\gamma}_{\operatorname*{C};j}^{\operatorname*{ext},\sigma}$,
$\boldsymbol{\gamma}_{\operatorname*{C};j}^{\sigma}\left(  s\right)  $,
$\boldsymbol{\gamma}_{\operatorname*{C};j}^{\operatorname*{ext},\sigma}\left(  s
\right)  $\dotfill                                                                    & 
one-sided and two-sided Cauchy trace operators and frequency scaled versions, see
(\ref{defboldgammaform}), (\ref{gammadjs}), Notation \ref{NotBAext}                                                                                                                                            \\
$\mathsf{E}_{j}\left(  s\right)  $\dotfill                                            & trace lifting operator; see Lem.~\ref{LemScaledTraces}                                                                 \\
$\jump{u}{D}{;j}, \jump{u}{D}{;j}\left(  s\right)  $\dotfill                          & Dirichlet jump across $\Gamma_{j}$
and frequency scaled version; see Def. \ref{DefJumpmean}                                                                                                                                                       \\
$\left[  \mathbf{u}\right]  _{\operatorname*{D};j,k}$\dotfill                         & Dirichlet jump across partial boundary $\Gamma_{j,k}$; see (\ref{partjump})                                            \\
\modified{$\jump{u}{N}{;j},
\jump{u}{N}{;j}^{\operatorname*{ext}}$},
\modified{$\jump{u}{N}{;j}(s),
  \jump{u}{N}{;j}^{\operatorname*{ext}}(s)$\dotfill}                                 & jump of co-normal derivative across $\Gamma_{j}$ and frequency scaled version; see \cref{DefJumpmean}, \cref{NotBAext} \\
$\left[  \mathbf{w}\right]  _{\operatorname*{N};j,k}$, $\left[  \mathbf{w}\right]
_{\operatorname*{N};j,k}^{\operatorname*{ext}}$ \dotfill                              & 
jump of co-normal derivative across partial boundary $\Gamma_{j,k}$, see (\ref{partjump})                                                                                                                      \\
\modified{$\mean{u}{D}{;j}, \mean{u}{D}{;j}(s)$},
\modified{$\mean{u}{N}{;j}, \mean{u}{N}{;j}(s)$}\dotfill                              & mean value of Dirichlet traces and co-normal derivatives across boundary
$\Gamma_{j}$, and their frequency scaled version; see \cref{DefJumpmean}, \cref{NotBAext}                                                                                                                      \\
\modified{$\ell_{j}\left(  s\right)  \left(  \cdot,\cdot\right),
\mathsf{L}_{j}\left(  s\right)$}\dotfill                                              & sesquilinear form
associated to the full space transmission problem with coefficients $\mathbb{A}_{j}^{\operatorname*{ext}}$, $p_{j}^{\operatorname*{ext}}$ and relative operator; see \cref{Defellsj}                           \\
$\mathsf{L}_{j}^{-}\left(  s\right)  $, $\mathsf{L}_{j}^{+}\left(  s\right)$ \dotfill & 
differential operator on subdomains $\Omega_{j}^{-}$, $\Omega_{j}^{+}$, see (\ref{defDiffOpsigma})                                                                                                             \\
$\nabla_{\operatorname*{pw};j}$\dotfill                                               & piecewise gradient; see (\ref{defpwgradient})                                                                          \\

\modified{$\mathsf{V}_{j}\left(  s\right),
\mathsf{K}_{j}\left(  s\right)$, }         &                                                                                     \\
\modified{$\mathsf{K}_{j}^{\prime}\left(  s\right),
  \mathsf{W}_{j}(s)$}\dotfill         & boundary integral operators, see \cref{DefBIO}                                      \\
$\mathsf{C}_{j}\left(  s\right)  $\dotfill & Calder\'{o}n operator for the subdomain $\Omega_{j}$, see Def. \ref{DefCaldOp}      \\
\modified{$\mathsf{C}\left(  s\right),
c\left(  s\right)  $}\dotfill              & global Calder\'{o}n operator and associated sesquilinear form; see \cref{DefCaldOp} \\
$\operatorname*{Id}$\dotfill               & identity operator                                                              
\end{longtable}}

\newcommand{\noopsort}[1]{} \newcommand{\printfirst}[2]{#1}
  \newcommand{\singleletter}[1]{#1} \newcommand{\switchargs}[2]{#2#1}
  \def\cprime{$'$} \def\cprime{$'$} \def\cprime{$'$}

\end{document}